\theoremstyle{definition}
\newtheorem{definition}{Definition}[section]
\theoremstyle{definition}
\newtheorem{theorem}{Theorem}[section]
\newtheorem{corollary}{Corollary}[section]
\newtheorem{proposition}{Proposition}[section]
\newtheorem{lema}{Lemma}[section]
\theoremstyle{remark}
\newtheorem{remark}{Remark}[section]
\begin{document}	
\title[Radial Foliations]{Radial Foliations in dimension Three}
\author{Felipe Cano}
\author{Beatriz Molina-Samper}
\address{Dpto. \'Algebra, An\'alisis Matem\'atico, Geometr\'ia y Topolog\'ia and Instituto de Matem\'aticas de la Universidad de Valladolid, Facultad de Ciencias, Universidad de Valladolid, Paseo de Bel\'en, 7, 47011 Valladolid, Spain}
\email{fcano@uva.es, beatriz.molina@uva.es}
\thanks{Both authors are supported by Spanish  Ministerio de Ciencia, under the Project PID2022-139631NB-100.}			

\subjclass[2010]{32S65, 14M25, 14E15.}
\date{\today}

\keywords{Singular Foliations, Radial foliations, Hirzebruch Surfaces, Reduction of Singularities, Invariant Hypersurfaces}
\begin{abstract}
	Radial germs of holomorphic foliations in dimension two have a characteristic property:  they are the only singular foliations whose reduction of singularities has no singular points. We also know that they are desingularized by a single dicritical blowing-up.
Let us say that a foliated space $((\mathbb C^3,\mathbf 0),E,\mathcal F)$ is {\em almost radial} when it has a reduction of singularities without singular points; it will be ``radial'' under a certain additional condition on the morphism of reduction of singularities. We show that the radial condition corresponds to the ``open book'' situation. We end the paper with a discussion on the general almost radial case.
\end{abstract}
\maketitle
\tableofcontents
\section{Introduction}
The reduction of singularities of a function, in the sense that we want to obtain the normal crossings property for the total transform, produces systematically singular points in the corners of the total transform. This is so unless we perform no blowing-ups because the function is already a local coordinate.

In the case of foliations over an ambient space of dimension two the situation is not exactly the same one.  We can look at the foliation on $({\mathbb C}^2,\mathbf 0)$ given by the $1$-form
$$
xdy-ydx
$$
whose invariant curves are the lines through the origin. The origin is of course a singular point, but if we perform the corresponding quadratic blowing-up, we obtain a foliation without singularities, that is transverse to the exceptional divisor.

In fact, among the foliations over $(\mathbb C^2,\mathbf 0)$, the above (radial) one is the only singular foliation that can be totally desingularized with finite sequences of blowing-ups.

This work is devoted to analyse the natural similar question in ambient dimension three. Namely, what kind of codimension one foliations in an ambient dimension three may be totally desingularized by means of a sequence of blowing-ups?

The first example in dimension three is the ``open book foliation'' given in coordinates $x,y,z$ by the one form
$$
ydz-zdy.
$$
This foliation is a cylinder over the radial foliation in dimension two.

The main result in this paper is that all radial foliated spaces, i.e. that are totally desingularized by a ``controlled'' sequence of blowing-ups, are exactly ``open book foliated spaces''. We give some examples of unexpected desingularizable foliated spaces, where the ``controlled condition'' fails.

A word about the definition of foliated space. The ambient spaces are not just regular varieties $M$, but  a pair $(M,E)$ where $E$ is a given normal crossings divisor, that is naturally transformed by the centers of blowing-up. Hence, a foliated space is a triple
$$
(M,E,\mathcal F),
$$
where $\mathcal F$ is a codimension one singular foliation on $(M,E)$. The desired regular simple points are non-singular for $\mathcal F$, but we also ask $\mathcal F$ to have normal crossings with $E$ at them.

A systematic description of radial foliated spaces foliations  in dimension two is necessary. In the germified case, in the case of the projective plane and in the case of Hirzebruch surfaces. We also present the notion of ``Hirzebruch tube'' in ambient dimension three, that is a key tool for our arguments.

We call ``almost radial foliated spaces'' those that can be totally desingularized, without asking the control condition on the resolution sequence. This paper ends by showing that almost radial spaces have (infinitely many) invariant surfaces. The statement is related with Local Brunella Alternative, see \cite{Can-R-S}, \cite{Can-R} and \cite{Cer}. In the cases when the Alternative is known to be true, the so-called ``partial separatrices'' are useful in the proofs. A reduction of singularities without partial separatrices should have only simple singularities of the type ``corner'' (see \cite{Can},\cite{Can-C},\cite{Can-MS})). We think that this kind of reduction of singularities corresponds in fact to the almost radial case.

\section{Blowing-ups of Codimension One Foliated Spaces}
Let $M$ be a non-singular complex analytic space. An {\em ambient space over $M$} is a pair $(M,E)$, where $E$ is a normal crossings divisor of $M$.
A {\em foliated space $\mathcal M$ over $(M,E)$} is a triple
$$
\mathcal M=(M,E,\mathcal F),
$$
where $\mathcal F$ is a codimension one singular holomorphic foliation over $M$.

Consider a codimension one singular holomorphic foliation $\mathcal F$ over $M$ and a point $P\in M$. We recall that $\mathcal F$ is locally generated at $P$ by a germ of holomorphic $1$-form
$$
\omega=\sum_{i=1}^na_idx_i,\quad a_i\in \mathcal O_{M,P},\, i=1,2,\ldots,n,
$$
satisfying the Frobenius integrability condition $\omega\wedge d\omega=0$ and such that the coefficients $a_i$ are without common factor. We say that $P$ is a {\em singular point of $\mathcal F$} when $a_i(P)=0$,
for any $i=1,2,\ldots,n$. Let us denote by $\operatorname{Sing}(\mathcal F)$ the set of singular points of $\mathcal F$, endowed with its structure of reduced closed analytic subspace of $M$. Note that $\operatorname{Sing}(\mathcal F)$ has codimension at least two in $M$.

A reduced and irreducible closed analytic subspace $Y$ of $M$ is said to be {\em invariant for $\mathcal F$} when $i^*\omega=0$, denoting by $i:Y_{\operatorname{reg}}\rightarrow M$ the inclusion of the regular part $Y_{\operatorname{reg}}$ of $Y$ in $M$. This property can be read locally at a single regular point of $Y$.

Consider a foliated space $\mathcal M=(M,E,\mathcal F)$ and an irreducible component $D$ of $E$. We use the terminology {\em dicritical component} for saying that $D$ is not invariant and {\em non-dicritical component} for saying that $D$ is invariant. Thus, we can decompose the divisor $E$ into two normal crossings divisors
$$
E=E_{\operatorname{dic}}\cup E_{\text{inv}},
$$
where $E_{\operatorname{dic}}$ is the union of the dicritical components of $\mathcal M$ and $E_{\text{inv}}$ is the union of the non-dicritical components of $\mathcal M$.

A blowing-up of ambient spaces $\pi: (M',E')\rightarrow (M,E)$ is given by a blowing-up $\pi:M'\rightarrow M$ with center a non-singular closed irreducible analytic subspace $Y\subset M$ having normal crossings with the divisor $E$. The divisor $E'$ is defined to be $E'=\pi^{-1} (E\cup Y)$. We hope that there is no confusion with the double use of the notation $\pi$.
\begin{remark}
In the case when $Y$ has codimension exactly one, the blowing-up $\pi$ has the following sense: the morphism $\pi: M' \to M$ is the identity morphism and the new divisor $E'$ is given by $E'=E\cup Y$.
\end{remark}
Given a blowing-up $\pi: (M',E')\rightarrow (M,E)$ of ambient spaces centered at $Y\subset M$ and a foliated space $\mathcal M=(M,E,\mathcal F)$, we have a transformation of foliated spaces
$$
\pi: \mathcal M'=(M',E',\mathcal F')\rightarrow (M,E,\mathcal F)=\mathcal M
$$
where $\mathcal F'$ is locally generated by $\pi^*\omega$ divided by an appropriate power of a local equation of the exceptional divisor $\pi^{-1}(Y)$. The transformation $\pi:\mathcal M'\rightarrow \mathcal M$ will be called an {\em admissible blowing-up} when the center $Y$ is invariant for $\mathcal F$.

We say that the blowing-up $\pi$ is {\em dicritical} when the exceptional divisor $\pi^{-1}(Y)$ is dicritical for $\mathcal M'$; otherwise, we say that $\pi$ is {\em non-dicritical}.

\begin{remark}
In general, we ask the centers to be invariant since otherwise the blowing-up breaks the dynamics. For instance, the blowing-up of the $x$-axis in $(\mathbb C^3,0)$ with respect to the regular foliation given by $dx=0$.
\end{remark}

\subsection{Simple Regular Points}
The reduction of singularities of a foliated space consists in finding a finite sequence of admissible blowing-ups in such a way that the last transformed foliated space would have only {\em simple points}. For details, the reader can look at \cite{Can,Can-C,Can-MS}.

Simple points may be singular for $\mathcal F$ or not. In this work we are not interested in giving a complete description of simple singular points, we just precise the definition for non-singular simple points.
\begin{definition}
Consider a foliated space $\mathcal M=(M,E,\mathcal F)$ and a point $P\in M$ such that $P\notin \operatorname{Sing}(\mathcal F)$. We say that $P$ is a {\em simple regular point for $\mathcal M$} when $\mathcal F$ and $E$ have normal crossings at $P$ in the following sense: there is a local coordinate system $(x_1,x_2,\ldots,x_n)$ such that $\mathcal F$ is locally given at $P$ by $dx_{1}=0$ and $E\subset (x_1x_2\cdots x_n=0)$.
\end{definition}
\begin{remark}\label{rk:nodicritical corners}
If $P\in M$ is a simple regular point for $\mathcal M$, then we have two possibilities:
\begin{enumerate}
\item The divisor $E_{\operatorname{inv}}$ has exactly one component through $P$.
\item All the irreducible components of  $E$ through $P$ are dicritical ones. In this case, the number of  irreducible components of  $E$ through $P$ is strictly smaller than the dimension $n=\dim M$. In other words, the point $P$ cannot be a ``corner'' of dicritical components.
\end{enumerate}
\end{remark}

\subsection{Indestructible Singularities}
We are interested in points where the foliation has a holomorphic first integral.

We say that a codimension one holomorphic singular  foliation $\mathcal F$ over $M$ has a {\em local holomorphic first integral at a point $P$} if $\mathcal F$ is generated locally at $P$ by a $1$-form of the type
$$
\omega=df/h,
$$
where $f$ and $h$ are germs of holomorphic functions at $P$.

Note that $\mathcal F$ has always holomorphic first integral at non-singular points.

\begin{proposition}
\label{prop:indestructible}
Let $\pi:\mathcal M'\rightarrow\mathcal M$ be an admissible blowing-up with center $Y\subset M$ and consider a point $P\in Y$ where $\mathcal F$ has a local first integral at $P$. Then, the transform $\mathcal F'$ has local first integral at all the points $P'\in \pi^{-1}(P)$. Moreover, if the codimension of $Y$ is bigger than or equal to two, there is at least a singular point in $\pi^{-1}(P)$.
\end{proposition}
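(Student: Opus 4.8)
The statement has two independent parts, and I would handle them in turn.

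For the persistence of a local first integral, the plan is to push the identity $\pi^{*}(df/h)=d(f\circ\pi)/(h\circ\pi)$ through the definition of the transform. By hypothesis $\mathcal F$ is generated near $P$ by $\omega=df/h$ with $f,h$ holomorphic, and since pullback commutes with the exterior derivative we get $\pi^{*}\omega=d\tilde f/\tilde h$, where $\tilde f=f\circ\pi$ and $\tilde h=h\circ\pi$ are holomorphic. As $\mathcal F'$ is, by definition, generated by $\pi^{*}\omega$ divided by a suitable power $t^{k}$ of a local equation $t$ of the exceptional divisor $\pi^{-1}(Y)$, a local generator of $\mathcal F'$ is proportional to $d\tilde f$; hence $\tilde f=f\circ\pi$ is a holomorphic first integral of $\mathcal F'$ at every $P'\in\pi^{-1}(P)$. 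The only point to verify is that the resulting generator has the form $d\tilde f$ over a holomorphic function, which is immediate.

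For the existence of a singular point when $c:=\operatorname{codim}Y\ge 2$, I would work in a coordinate system $(x_1,\dots,x_n)$ at $P$ adapted to the center, $Y=\{x_1=\dots=x_c=0\}$. Here the admissibility hypothesis enters decisively: since $Y$ is invariant we have $i^{*}\omega=0$, whence $d(f|_Y)=i^{*}df=(h|_Y)\,i^{*}\omega=0$, so $f|_Y$ is constant; normalizing $f(P)=0$ we obtain $f\in I(Y)=(x_1,\dots,x_c)$, and therefore the order $\nu:=\operatorname{ord}_Y f\ge 1$. Let $F_\nu(x_1,\dots,x_c)$ denote the homogeneous form of degree $\nu$ collecting the terms of $f$ of minimal order in the normal variables, with coefficients frozen at $P$ (the transverse $\nu$-jet of $f$ at $P$).

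The core is a chart computation. In the chart $x_1=t,\ x_i=t\xi_i\ (2\le i\le c),\ x_j=x_j\ (j>c)$ one has $\tilde f=t^{\nu}\hat f$, so $d\tilde f=t^{\nu-1}(\nu\hat f\,dt+t\,d\hat f)$; after dividing by the appropriate power of $t$ (and absorbing the unit coming from $\tilde h$, which on the fiber equals the constant $h(P)$), the restriction of the resulting generator $\omega'$ to the fiber $\Lambda:=\pi^{-1}(P)=\{t=0,\ x_j=0\}\cong\mathbb P^{c-1}$ annihilates every $d\xi_i$ and $dx_j$ component, since each carries a factor $t$. What survives is the $dt$-component, equal to $\nu\,F_\nu(1,\xi_2,\dots,\xi_c)\,dt$ up to a nonvanishing factor. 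Consequently $\operatorname{Sing}(\mathcal F')\cap\Lambda$ is (in the reduced situation, and contains in general) the projective zero locus $V(F_\nu)\subset\mathbb P^{c-1}$.

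It then remains to observe that $V(F_\nu)$ is nonempty. If $F_\nu\equiv 0$ the whole fiber is singular; if $F_\nu\not\equiv 0$ it is a homogeneous form of degree $\nu\ge 1$ in $c\ge 2$ variables, so $V(F_\nu)$ is a hypersurface of the positive-dimensional space $\mathbb P^{c-1}$ and is nonempty. In either case $\pi^{-1}(P)$ contains a singular point of $\mathcal F'$. I expect the main obstacle to be the precise bookkeeping of the power of $t$ to be stripped together with the factor contributed by $h$ (reducing to the case $h(P)\neq 0$, equivalently to a reduced generator); the genuinely subtle case is exactly when the transverse initial form degenerates at $P$, so that $F_\nu$ vanishes identically, for then—far from destroying the singularity—the entire fiber becomes singular, and nonemptiness is automatic.
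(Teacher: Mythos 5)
Your proof of the first assertion is correct and coincides with the paper's (pull-back commutes with $d$, so $f\circ\pi$ over $\tilde h\,t^{k}$ is again a generator of the required form). The gap is in the second assertion, and it is exactly the point you dismiss as ``bookkeeping'': the factor $h$. Your reduction ``to the case $h(P)\neq 0$, equivalently to a reduced generator'' is impossible in general. Take $n=2$, $f=x^{3}+x^{2}y^{2}$, $h=x$, so that $\omega=df/h=(3x+2y^{2})dx+2xy\,dy$ is a reduced generator with a first integral in the paper's sense; there is \emph{no} presentation $\omega=dg/u$ with $u(\mathbf 0)\neq 0$, since $u\omega=dg$ would force $d\omega=\eta\wedge\omega$ with $\eta$ holomorphic, and writing $\eta=a\,dx+b\,dy$ the identity $2axy-b(3x+2y^{2})=-2y$ gives $b(0,y)=1/y$, a contradiction. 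When $h(P)=0$ your chart computation breaks down at the decisive step: with $\theta:=d\tilde f/t^{\nu-1}$ and $\tilde h=t^{\mu}\hat h$ one has $\omega'=\theta/\hat h$, so your restriction argument identifies the zeros of $\theta$ on the fiber $\Lambda$, namely $V(F_{\nu})$, but a zero of $\theta$ at which $\hat h$ also vanishes need \emph{not} be a zero of $\omega'$. What you actually prove is only $\operatorname{Sing}(\mathcal F')\cap\Lambda\supseteq V(F_{\nu})\setminus\{\hat h=0\}$, and this set can be empty: in the example above $F_{3}=x^{3}$, the initial form of $h$ is $x$, and $V(F_{3})=\{[0{:}1]\}$ is precisely the one point of the fiber where $\hat h$ vanishes. (A direct computation shows that point \emph{is} singular --- the true singular set on the fiber is $V(F_{\nu}/H_{\mu})$, where $H_{\mu}$ is the initial form of $h$ along $Y$ --- but certifying this, and its nonemptiness, requires an algebraic argument about how $H_{\mu}$ divides $dF_{\nu}$ that you never make.) The same defect infects your degenerate case: when the frozen form vanishes identically, the conclusion ``the whole fiber is singular'' again presupposes that $\tilde h$ is a unit along the fiber.

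For contrast, the paper's proof avoids every chart and every mention of $h$: after normalizing $f(P)=0$, invariance of $Y$ gives $Y\subset H=(f=0)$; since $f\circ\pi$ is a first integral of $\mathcal F'$ vanishing on $\pi^{-1}(Y)$, both the exceptional divisor and the strict transform $H'$ of $H$ are invariant hypersurfaces; any point of $H'\cap\pi^{-1}(Y)$ lies on two distinct invariant hypersurfaces and is therefore singular; and $H'\cap\pi^{-1}(P)\neq\emptyset$ because $\dim H>\dim Y$ when $\operatorname{codim}Y\geq 2$. If you want to keep your computational approach, the cleanest repair is to import this observation rather than fight the division by $\hat h$: the zeros you are looking for sit on the trace of the strict transform of $(f=0)$ in the fiber, which is invariant together with the exceptional divisor, and that incidence alone forces singularity.
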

\begin{proof}
The first part comes essentially from the commutativity of the pull-back of differential forms given by
$$
\pi^* df= d(f \circ \pi).
$$
For the second part, up to adding a constant to the first integral $f$, we may assume that $f(P)=0$. Moreover, the fact $Y$ is invariant implies that $Y$ is contained in the hypersurface $H=(f=0)$. In this situation, the exceptional divisor $\pi^{-1}(Y)$ is non-dicritical for $\mathcal M'$ and all the points of $\pi^{-1}(Y)$ belonging to the strict transform $H'$ of $H$ are necessarily singular points for $\mathcal F'$. When $Y$ has codimension greater than or equal to two, we see that $\pi^{-1}(P)\cap H' \ne \emptyset$.
\end{proof}

As a consequence, the singular points where $\mathcal F$ has local first integral are ``indestructible'' under admissible blowing-ups. Moreover, if we start with a non-singular point and we perform and admissible blowing-up with center of codimension at least two, then we generate at least one of that ``indestructible'' singularities.

\subsection{Two-Dimensional Index-Persistent Points}
\label{Two-Dimensional Index-Persistent Points}
In this subsection we present another type of ``indestructible'' singularities in ambient dimension two.

\begin{definition}
Let $\mathcal M=(M,E,\mathcal F)$ be a foliated space of dimension two and a point $P\in M$. We say that $P$ is an {\em index-persistent point} for $\mathcal M$ if there is a non-singular curve $\Gamma \subset M$ invariant for $\mathcal F$ with $P\in \Gamma$ and such that the Camacho-Sad index at $P$ of $\mathcal F$ with respect to $\Gamma$ has strictly negative real part.
\end{definition}
An index-persistent point is necessarily a singular point for $\mathcal F$.
\begin{remark}
	\label{remark:index-persistent}
Take a point $P\in M$ and a curve $\Gamma$ invariant for $\mathcal F$ such that $P\in \Gamma$. Let $P'$ be the point of $\pi^{-1}(P)$ defined by the strict transform $\Gamma'$ of $\Gamma$ by $\pi$. We know (see \cite{Cam-S,Can-C-D}) that the Camacho-Sad index of $\mathcal F'$ at $P'$ with respect to $\Gamma'$ is exactly of one unit less that the Camacho-Sad index of $\mathcal F$ at $P$ with respect to $\Gamma$.
\end{remark}
As a consequence of the previous remark, we have the following result:
\begin{proposition} \label{prop:camachosad}
Let $\pi:\mathcal M'\to\mathcal M$ be an admissible blowing-up of two-dimensional foliated spaces. If there is an index-persistent point for $\mathcal M$, then there is also an index-persistent point for $\mathcal M'$. Hence $\mathcal M'$ has at least one singular point.
\end{proposition}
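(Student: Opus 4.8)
The plan is to reduce everything to the local index computation recorded in Remark~\ref{remark:index-persistent}, organized as a case distinction on the center $Y$ of the admissible blowing-up. By hypothesis there is a non-singular invariant curve $\Gamma$ through the index-persistent point $P$ whose Camacho--Sad index $\operatorname{CS}(\mathcal F,\Gamma,P)$ has strictly negative real part. Since $\mathcal M$ has dimension two and $Y$ is a non-singular irreducible invariant subspace having normal crossings with $E$, only two possibilities occur: $Y$ is a point, or $Y$ is a (codimension one) invariant curve. My strategy is to exhibit, in each case, a non-singular invariant curve through a suitable point of $\pi^{-1}(P)$ carrying an index of negative real part.

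First I would dispose of the codimension one case. Here, by the Remark following the definition of a blowing-up of ambient spaces, the morphism $\pi$ is the identity on the underlying space and only enlarges the divisor to $E'=E\cup Y$. Consequently $\mathcal F'=\mathcal F$, the curve $\Gamma$ and the point $P$ are untouched, and so is the Camacho--Sad index; hence $P$ is still index-persistent for $\mathcal M'$. Next I would treat the case where $Y$ is a point $Q$. If $Q\neq P$, then $\pi$ is an isomorphism near $P$, so $P$ lifts to a point $P'$ together with the lifted curve $\Gamma'$ and the same index, and there is nothing more to check.

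The substantive subcase is $Q=P$. Since $\Gamma$ is non-singular and passes through the center, its strict transform $\Gamma'$ is again non-singular and invariant, and it meets the exceptional divisor at a single point $P'$. Applying Remark~\ref{remark:index-persistent} to the pair $(\Gamma,P)$ yields
$$
\operatorname{CS}(\mathcal F',\Gamma',P')=\operatorname{CS}(\mathcal F,\Gamma,P)-1.
$$
As subtracting $1$ only decreases the real part, we still have $\operatorname{Re}\operatorname{CS}(\mathcal F',\Gamma',P')<0$, so $\Gamma'$ and $P'$ witness that $P'$ is index-persistent for $\mathcal M'$.

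Finally, in every case the produced index-persistent point is singular for $\mathcal F'$, because an index-persistent point is necessarily a singular point; this gives the last assertion. The argument is essentially a bookkeeping of the possible centers, and the only point requiring genuine care is that the distinguished invariant curve $\Gamma$ be tracked faithfully through the blowing-up, so that its strict transform stays non-singular and still carries a negative-real-part index. Since this is exactly what Remark~\ref{remark:index-persistent} guarantees for the strict transform at the infinitely near point, I do not anticipate a serious obstacle beyond ensuring the case list (point equal to, point distinct from, and curve through or not through $P$) is exhaustive.
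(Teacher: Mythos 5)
Your proof is correct and follows essentially the same route as the paper, which deduces the proposition directly from Remark \ref{remark:index-persistent}: the strict transform $\Gamma'$ of the witnessing curve $\Gamma$ picks up an index exactly one unit lower at the infinitely near point, so its real part stays strictly negative, and index-persistent points are singular by definition. Your explicit case bookkeeping (codimension-one center, point center away from $P$, point center at $P$) is a harmless elaboration of the trivial cases the paper leaves implicit.
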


\subsection{Radial Foliated Spaces}
\label{Radial Foliated Spaces}
We end this section by defining the notion of {\em radial foliated space}.
\begin{definition}
\label{def:almostradialfoliatedspace}
Consider a foliated space $\mathcal M=(M,E,\mathcal F)$. We say that $\mathcal M$ is an {\em almost radial foliated space} if and only if there is a finite sequence of admissible blowing-ups
\begin{equation}
	\label{eq:desingularizationsequencegeneral}
	\mathcal S:\quad\quad
	\mathcal M=\mathcal M_0\stackrel{\pi_1}{\leftarrow}
	\mathcal M_1
	\stackrel{\pi_2}{\leftarrow}
	\cdots
	\stackrel{\pi_N}{\leftarrow}
	\mathcal M_N=(M_N,E^N,\mathcal F_N)
\end{equation}
such that all the points in $M_N$ are simple regular points for $\mathcal M_N$. We say that the sequence $\mathcal S$ in Equation \eqref{eq:desingularizationsequencegeneral}  is a {\em resolution sequence for $\mathcal M$}.
\end{definition}
The case $N=0$ corresponds to an empty resolution sequence and in this case we have that $\mathcal M$ has no singular points and all the points are regular simple points.
The following features are direct from the definition:
\begin{itemize}
\item The intermediate steps $\mathcal M_j$ are also almost radial foliated spaces, for $j=0,1,2,\ldots,N$. Indeed, a resolution sequence for $\mathcal M_j$ is given by the blowing-ups $\pi_{j+1},\pi_{j+2},\ldots,\pi_{N}$.
\item Let $D$ be and irreducible component $D$ of $E$ and let $E'\subset E$ be the normal crossings divisor in $M$ obtained by eliminating $D$. Then $\mathcal M'=( M,E',\mathcal F)$ is also an almost radial foliated space and the sequence $\pi_{1},\pi_{2},\ldots,\pi_{N}$ of blowing-ups induces a resolution sequence for $\mathcal M'$.
\item If a blowing-up $\pi_j$ has a center $Y_j$ of codimension one in $M$, we can eliminate it. Proceeding in this way, we can obtain a resolution sequence such that all the centers have codimension at least two. Such resolution sequences will be called {\em adjusted resolution sequences}.
\item As an application of Proposition \ref{prop:indestructible}, if $Y_{j-1}$ is the center of $\pi_j$ and it has codimension at least two in $M$, then we necessarily have that $Y_{j-1}\subset \operatorname{Sing}(\mathcal F_{j-1})$.
\end{itemize}
\begin{remark} \label{radialregular}
If $\mathcal M$ has no singular points, then $\mathcal M$ is almost radial if and only if all the points in $M$ are simple for $\mathcal M$. In this case, the only adjusted resolution sequence is the empty one.
\end{remark}
Let $\mathcal S$ be a resolution sequence for an almost radial foliated space $$\mathcal M=(M,E,\mathcal F)$$ and consider a point $P\in M$. We can localize the resolution sequence at $P$ to obtain a resolution sequence $\mathcal S_P$ of the germ
$$
\mathcal M_P=((M,P),(E,P),\mathcal F_P)
$$
as follows. We consider the foliated spaces $\widetilde{\mathcal M}_j$ obtained by germifying $\mathcal M_j$ at the compact sets
$$
K_{j,P}=(\pi_1\circ\pi_2\circ\cdots\circ\pi_{j})^{-1}(P)
$$
and the transformations $\pi_{j,P}:\widetilde{\mathcal M}_j\rightarrow
\widetilde{\mathcal M}_{j-1}$ given by the restriction of $\pi_j$. Now, we skip all the $\pi_{j,P}$ that are the identity transformation, because the center $Y_{j-1}$ does not intersect the compact set $K_{j-1,P}$. As a consequence ot this procedure, we see that
$\mathcal M_P$ is also an almost radial foliated space.

In order to define {\em radial foliated spaces}, we need to precise the kind of resolution sequences we allow.
Consider a blowing-up morphism of ambient spaces
$$
\pi: (M',E')\rightarrow ((\mathbb C^n,\mathbf 0), E)
$$
with center $Y\subset ({\mathbb C}^n,\mathbf 0)$.
We say that $\pi$ is {\em $E$-controlled} if the union of the ideal of $Y$ with the ideals of the irreducible components of $E$ fulfill the maximal ideal of $\mathcal O_{\mathbb C^n,\mathbf 0}$.
\begin{remark}
If $Y=\{\mathbf 0\}$, the morphism $\pi$ is $E$-controlled, even when $E=\emptyset$. In the case that $Y=(x_1=x_2=\cdots=x_{n-1}=0)$ is a curve, the morphism $\pi$ is $E$-controlled if and only if there is an irreducible component of $E$ transverse to $Y$.
\end{remark}
\begin{definition} \label{def:radialfoliatedspace}
An almost radial foliated space $\mathcal M=(M,E,\mathcal F)$ is a {\em radial foliated space} if and only if there is a resolution sequence $\mathcal S$ for $\mathcal M$ such that the first blowing-up in $\mathcal S_P$ is $E$-controlled, for any $P\in M$.
\end{definition}

The main statement in this work is the following result that characterizes germs of radial foliated spaces in dimension three.

\begin{theorem}\label{main}
Consider a foliated space $\mathcal M_0=((\mathbb C^3,\mathbf 0),E^0,\mathcal F_0)$. with non-empty singular locus. Then $\mathcal M_0$ is a radial foliated space if and only if there are coordinates $x,y,z$ such that $\mathcal F_0$ is the ``open book'' foliation given by
$$
\omega=ydz-zdy.
$$
and $E^0\subset (xyz=0)$.
\end{theorem}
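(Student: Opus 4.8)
The plan is to prove the two implications separately. The converse, that an open book with $E^0\subset(xyz=0)$ is radial, is an explicit controlled resolution. The direct implication, that a radial space with nonempty singular locus must be an open book, is the substantial part, and my strategy is to reduce it to the two-dimensional radial classification recalled at the outset, read off a generic transverse section and on the exceptional divisor.

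For the converse I would start from $\omega=ydz-zdy$, whose singular locus is the invariant line $L=\{y=z=0\}$, so that blowing up $L$ is admissible. In the charts $z=yt$ and $y=sz$ a direct computation gives $\pi^*\omega=y^2\,dt$ and $\pi^*\omega=-z^2\,ds$; dividing by the appropriate power of the exceptional equation, the transform is generated by $dt$, respectively $ds$, hence everywhere nonsingular and transverse to the exceptional divisor, which is therefore dicritical. As the foliation is a cylinder over the two-dimensional radial one, no singular point survives and all points of the final space are simple regular points; the normal crossings with $E^0$ persist because $\{y=0\}$ and $\{z=0\}$ are invariant while $\{x=0\}$ is transverse to $L$. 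To meet the $E$-controlled requirement I would distinguish whether $\{x=0\}\in E^0$: if so, the blowing-up of $L$ is already $E$-controlled; if not, I would precede it by the blowing-up of the origin, a center that is $E$-controlled unconditionally and whose dicritical exceptional divisor supplies the transverse component controlling the ensuing blowing-up of the strict transform of $L$.

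For the direct implication I would fix an adjusted resolution sequence realizing the radial condition and analyze its first blowing-up $\pi_1$, whose center $Y_0\subset\operatorname{Sing}(\mathcal F_0)$ has codimension at least two and is $E$-controlled. Proposition \ref{prop:indestructible} already forbids a holomorphic first integral along $Y_0$, since such a point is indestructible while the sequence reaches a space with no singular points; this is consistent with the open book, which carries no holomorphic first integral at the origin. Thus $Y_0$ is either the origin or an invariant curve carrying a transverse component of $E^0$. I would then argue that $\pi_1$ must be dicritical: an invariant exceptional divisor would induce a two-dimensional foliated space still to be desingularized, and by the two-dimensional characterization together with the index-persistence obstruction of Proposition \ref{prop:camachosad} singular points would survive, contradicting full desingularization. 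With $\pi_1$ dicritical, cutting by a generic plane transverse to $Y_0$ yields a two-dimensional foliated space desingularized by the single induced dicritical blowing-up, so by the two-dimensional classification it is the radial foliation in suitable slice coordinates. Interpreting the exceptional divisor as a Hirzebruch tube over $Y_0$ shows the transverse model is constant along $Y_0$, so $\mathcal F_0$ is the cylinder over the radial foliation, i.e. the open book; tracking components through $\pi_1$ forces the invariant ones into $\{y=0\},\{z=0\}$ and any transverse dicritical one into $\{x=0\}$, giving $E^0\subset(xyz=0)$.

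The main obstacle I expect is the rigidity step: upgrading the pointwise transverse radial model to the global assertion that $\mathcal F_0$ equals $ydz-zdy$ in a single coordinate system, with $E^0$ supported on the coordinate hyperplanes. The delicate points are to show that the transverse model does not twist along $Y_0$, so that the cylinder structure is genuine rather than merely fibrewise, and that the \emph{controlled} character of the sequence, and not mere desingularizability, is what rules out the unexpected almost radial examples mentioned in the introduction. This is precisely where I would combine the Hirzebruch tube description of the exceptional divisor with the index-persistence of Proposition \ref{prop:camachosad} to exclude any monodromy of the transverse radial model.
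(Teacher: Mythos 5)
Your converse argument and the skeleton of your direct implication (dicriticalness of the first blowing-up, reduction to a two-dimensional transverse section, the two-dimensional classification) do match the paper's strategy, but there is a genuine gap at exactly the step you flag as the main obstacle, and the tools you propose there cannot close it. The obstruction to passing from ``the transverse slice carries a cart-wheel foliation'' to ``$\mathcal F_0$ is the open book in suitable coordinates'' is not monodromy: everything is a germ and the center is a non-singular curve germ, hence a disk, so there is nothing for Proposition \ref{prop:camachosad} or a Hirzebruch-tube picture of the exceptional divisor to act on. The real issue is analytic rigidity of the family of slices, and the paper settles it with the Mattei--Moussu section argument (Lemma \ref{lema:mattei-moussu section} and Corollary \ref{cor:libroabierto}): if $\omega$ is integrable and $\omega\vert_{x=0}=ydz-zdy$, then the coefficient $\phi_1$ of $dy\wedge dz$ in $d\omega$ is a unit, the non-singular vector field $\xi=\phi_1\partial/\partial x+\phi_2\partial/\partial y+\phi_3\partial/\partial z$ satisfies $\omega(\xi)=0$, and rectifying $\xi$ and using integrability once more yields $\omega=u\,(ydz-zdy)$ up to a coordinate change. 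Thus a \emph{single} cart-wheel section forces the open book structure, but only because of the Frobenius condition, which your proposal never invokes at this point; without an argument of this kind the ``cylinder'' claim is simply unproved.

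Two further steps are asserted rather than proved, and they hide most of the paper's technical work. First, your claim that a generic plane transverse to $Y_0$ is ``desingularized by the single induced dicritical blowing-up'' presupposes that the later centers of the sequence miss the section: in the quadratic case this is the statement $\operatorname{Sing}(\mathcal F_1)\cap E^1=\{P_1\}$ (Proposition \ref{prop:thecaseB}), and in the monoidal case the statement $N=1$ (Proposition \ref{prop:thecaseA}); both rest on excluding compact curves from the singular locus (Proposition \ref{prop:nocompactcurves}), which is where Hirzebruch tubes, Proposition \ref{prop: radialesHirzebruch} and vertical blowing-ups (Propositions \ref{prop:verticalblowingups} and \ref{prop:vertical2}) are actually needed --- together with the equireduction and index-persistence machinery (Subsection \ref{Equireduction Points}, Corollary \ref{cor:noindexpersistent}) that makes your ``singular points would survive'' dicriticality argument rigorous along the whole sequence rather than after one blowing-up. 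Second, for the conclusion $E^0\subset(xyz=0)$ you take for granted that a dicritical component of $E^0$ is transverse to the axis; that is precisely Lemma \ref{lema: rectificaciondicritical}, and it is not automatic: the open book with $E^0=(xy-z=0)$ is almost radial but \emph{not} radial, so this transversality is exactly where the controlled hypothesis must do real work, not merely be mentioned.
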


\section{Radial Foliated Spaces in Dimension Two} \label{Radial Foliated Spaces in Dimension Two}
In this Section \ref{Radial Foliated Spaces in Dimension Two}, we characterize germs of radial foliated spaces in dimension two, radial foliated spaces over the projective plane and radial foliated spaces over Hirzebruch surfaces.

In dimension two, the concepts of almost radial and radial foliated spaces are coincident as we see in the following proposition:

\begin{proposition} Let $\mathcal M=(M,E,\mathcal F)$ be a two dimensional foliated space. The following statements are equivalent:
\begin{enumerate}
\item[a)] $\mathcal M$ is an almost radial foliated space.
\item[b)] $\mathcal M$ is a radial foliated space.
\end{enumerate}
\end{proposition}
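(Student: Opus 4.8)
The plan is to observe that one implication is immediate and that the other reduces to a dimension count. First, the implication from b) to a) requires no work: by Definition \ref{def:radialfoliatedspace}, a radial foliated space is by definition an almost radial one satisfying an extra requirement on a suitable resolution sequence. Hence all the content lies in proving that an almost radial two-dimensional foliated space is automatically radial.

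For that direction, the strategy I would follow is to exploit the special feature of ambient dimension two. Starting from an arbitrary almost radial $\mathcal{M}$, I would invoke the bullet points following Definition \ref{def:almostradialfoliatedspace} to pass to an \emph{adjusted} resolution sequence $\mathcal{S}$, in which every center $Y_{j-1}$ of $\pi_j$ has codimension at least two in $M_{j-1}$. Since each $M_{j-1}$ is a surface, a reduced irreducible analytic subspace of codimension two is nothing but a single point. Thus every center occurring in $\mathcal{S}$ is a point.

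The core step is then to verify the $E$-controlled condition of Definition \ref{def:radialfoliatedspace} using this one fixed sequence $\mathcal{S}$, at every $P \in M$ simultaneously. Fixing $P$, I would form the localized sequence $\mathcal{S}_P$ as described after Definition \ref{def:almostradialfoliatedspace} and look at its first non-identity blowing-up $\pi_{j_0,P}$; its center is the germ of one of the point-centers $Y_{j_0-1}$ of $\mathcal{S}$. The decisive input here is the Remark recording that a blowing-up whose center is a point is $E$-controlled, irrespective of the divisor $E$. This renders the first blowing-up of $\mathcal{S}_P$ automatically $E$-controlled, and since $P$ is arbitrary the single sequence $\mathcal{S}$ witnesses that $\mathcal{M}$ is radial.

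The main obstacle, albeit a mild one, will be the careful bookkeeping in the localization: I must check that the germ at the compact fiber $K_{j_0-1,P}$ of a center that is a point genuinely presents itself as a single point in the germified ambient space, so that the Remark applies verbatim. I would also dispose separately of the degenerate case in which $\mathcal{S}_P$ is empty, that is when $\mathcal{M}$ has no singularities near $P$; there the condition holds vacuously, in accordance with Remark \ref{radialregular}.
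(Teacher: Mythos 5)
Your proposal is correct and follows exactly the paper's argument: the paper's entire proof is the one-line observation that blowing-ups centered at points are automatically controlled, which is precisely your core step (passing to an adjusted sequence so all centers are points in dimension two, then invoking the Remark that point-centered blowing-ups are $E$-controlled). Your additional bookkeeping on localization and the empty-sequence case is a faithful, slightly more detailed unpacking of the same route.
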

\begin{proof}
It is enough to note that the blowing-ups centered at points are automatically controlled.
\end{proof}

\subsection{Germs of Radial Foliated Spaces in Dimension Two}
\label{Germs of Radial Foliated Spaces in Dimension Two}
We say that a foliation $\mathcal F$ over $(\mathbb C^2,\mathbf 0)$ is a {\em cart-wheel foliation} when it is generated by the $1$-form $ydx-xdy$ in appropriate coordinates $x,y$.
\begin{remark}
We use the terminology ``cart-wheel foliation'' instead of ``radial foliation'' to avoid confusion with the definition of radial foliated space.
\end{remark}
In this Subsection \ref{Germs of Radial Foliated Spaces in Dimension Two}, we prove the following result:
\begin{theorem} \label{teo:dimensiontworadial}
A foliated space $((\mathbb C^2,\mathbf 0),E,\mathcal F)$ is a  radial foliated space over $(\mathbb C^2,\mathbf 0)$ if we are in one of the following situations:
\begin{enumerate}
\item[a)] The origin is a regular simple point.
\item[b)] The foliation $\mathcal F$ is a cart-wheel foliation and  $E=E_{\operatorname{inv}}$.
\end{enumerate}
\end{theorem}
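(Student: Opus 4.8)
The plan is to prove sufficiency by exhibiting, in each case, a controlled resolution sequence. Situation (a) requires no work: if the origin is a regular simple point then, since being non-singular and having normal crossings with $E$ are open conditions, every point of a small representative is a simple regular point. Hence the empty sequence ($N=0$) is a resolution sequence and it is $E$-controlled in the sense of Definition~\ref{def:radialfoliatedspace} (each localized sequence $\mathcal S_P$ is empty), so by Remark~\ref{radialregular} the germ $\mathcal M$ is a radial foliated space.

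The core is situation (b). I would take $\mathcal S$ to be the single blowing-up $\pi$ of the origin: it is admissible because the origin is invariant for the cart-wheel foliation, and it is $E$-controlled because its center is the point $\mathbf 0$. The first step is the standard chart computation. In the chart $(x,t)\mapsto(x,xt)$ one gets $\pi^*(y\,dx-x\,dy)=-x^2\,dt$, so that, after dividing by $x^2$, the transform $\mathcal F_1$ is given by $dt=0$; symmetrically one finds $ds=0$ in the chart $(s,y)\mapsto(sy,y)$. Thus $\mathcal F_1$ is non-singular along the whole exceptional divisor $D=\pi^{-1}(\mathbf 0)$ and transverse to it, so $D$ is dicritical.

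Next I would verify that every point of $M_1$ is a simple regular point for $\mathcal M_1=(M_1,E^1,\mathcal F_1)$. At a point of $D$ lying on no strict transform of a component of $E$, the only component of $E^1$ through it is the dicritical divisor $D$, and the local model $dt=0$, $D=(x=0)$ realizes case (2) of Remark~\ref{rk:nodicritical corners}. The hypothesis $E=E_{\operatorname{inv}}$ is what controls the remaining points: every component of $E$ is an invariant smooth curve for the cart-wheel foliation, hence a line through the origin, so its strict transform is a smooth curve meeting $D$ transversely at one point and, being the transform of an invariant curve, is itself a leaf of $\mathcal F_1$. At such a crossing the two components of $E^1$ are one invariant leaf and the dicritical divisor $D$, transverse to each other, which is case (1) of Remark~\ref{rk:nodicritical corners}. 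Finally, a point off $D$ near the origin is regular for $\mathcal F$ and lies on at most one component of $E$ (two distinct lines meet only at $\mathbf 0$), hence is already simple regular. To close the control condition: for $P=\mathbf 0$ the localized sequence $\mathcal S_{\mathbf 0}$ is $\pi$ itself, which is $E$-controlled, while for $P\neq\mathbf 0$ the center $\{\mathbf 0\}$ misses $K_{0,P}=\{P\}$, so $\pi_{1,P}$ is the identity and is skipped and $\mathcal S_P$ is empty; the previous sentences show such $P$ is already simple regular. Hence $\mathcal S$ witnesses that $\mathcal M$ is radial.

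The step I expect to be the main obstacle, and the one that pins down the hypotheses, is the simple-regularity check at the points of $D$. The delicate subcase is a crossing of $D$ with a strict transform of a component of $E$: it is harmless precisely because $E=E_{\operatorname{inv}}$ makes that strict transform invariant, so the crossing is an allowed invariant/dicritical corner and not a forbidden dicritical/dicritical corner. Indeed, were $E$ to contain a dicritical component $C$, its strict transform would meet the dicritical divisor $D$ at a two-dicritical-component corner, which is not simple regular by Remark~\ref{rk:nodicritical corners}; blowing that point up would, by Proposition~\ref{prop:indestructible}, create an indestructible singularity and so destroy radiality. The same observation supplies the converse, showing that $E=E_{\operatorname{inv}}$ is forced and that, together with the classical fact that the cart-wheel is the only singular plane germ desingularized by a single dicritical blowing-up, situations (a) and (b) exhaust the radial germs.
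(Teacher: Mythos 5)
Your proof of the implication actually stated (situations a) and b) imply radiality) is correct and is essentially the paper's own argument: the empty sequence for a), and for b) the single dicritical quadratic blowing-up, for which you supply the chart computation, the simple-regularity check at every point of the exceptional divisor (including the invariant/dicritical crossings that the hypothesis $E=E_{\operatorname{inv}}$ makes harmless), and the verification of the control condition — details the paper compresses into the single sentence ``we just have to perform a single blowing-up''.

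The problem is your closing paragraph, where you assert that the same observations ``supply the converse''. Although the theorem is worded with ``if'', the paper proves and later uses it as a characterization (Corollary \ref{cor:anhadircomponentes} needs the ``only if'' direction), and the converse is where the paper's proof puts all its effort. Your dicritical-corner observation does show that $E_{\operatorname{dic}}\ne\emptyset$ is incompatible with the single quadratic blowing-up being a resolution, and a similar corner argument would rule out resolution sequences made of two or more dicritical blowing-ups; but nothing in your proposal rules out resolution sequences containing a \emph{non-dicritical} blowing-up. Your appeal to the ``classical fact'' that a singular plane germ desingularized by a single dicritical blowing-up is a cart-wheel foliation cannot be invoked until one knows that any resolution of a radial space with singular origin must be exactly such a single dicritical blowing-up. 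That is precisely the content of Lemma \ref{lema:oneblowing-up1}: after a non-dicritical blowing-up the Camacho--Sad indices along the exceptional line sum to $-1$, so an index-persistent point exists, and by Proposition \ref{prop:camachosad} it propagates through every subsequent blowing-up, so the singularities can never be eliminated. Without this index-persistence argument (or some substitute), your claim that situations a) and b) exhaust the radial germs remains unproved.
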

Note that any foliated space $((\mathbb C^2,\mathbf 0),E,\mathcal F)$, where $E=E_{\operatorname{inv}}$ and $\mathcal F$ is a cart-wheel foliation, is a radial foliated space. To see this, we just have to perform a single blowing-up. If the origin is a regular point, and the foliated space is radial, we have Situation a), since the only possible adjusted reduction sequence is the empty one in view of Remark \ref{radialregular}. Thus, we have only to show that if the origin is singular and we have a radial foliated space, then we are in Situation b).

The end of the proof of Theorem \ref{teo:dimensiontworadial} will follow from Lemma \ref{lema:oneblowing-up1}:

\begin{lema}
 \label{lema:oneblowing-up1}
 Consider a foliated space $\mathcal M=\left((\mathbb C^2,\mathbf 0),\emptyset,\mathcal F\right)$ where the origin  $\mathbf 0$ is a singular point for $\mathcal F$ and let
$$
\pi:\mathcal M'=(M',E',\mathcal F')\rightarrow\mathcal M
$$
be the composition of a non-empty sequence of blowing-ups centered at singular points. Assume that the first blowing-up is non-dicritical or that the number of blowing-ups is bigger than or equal to two. Then, there is at least one point $P'$ in $M'$ that it is either a singular point  or a regular non-simple point for $\mathcal M'$.
\end{lema}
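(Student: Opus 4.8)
The plan is to prove the contrapositive: assuming that every point of $M'$ is a simple regular point for $\mathcal M'$ (so the sequence is a genuine reduction of singularities), I will show that the sequence must reduce to exactly one blowing-up, which is dicritical. This contradicts the hypothesis that the first blowing-up is non-dicritical or that there are at least two blowing-ups. Two preliminary remarks organize the argument. First, every center is a singular point, hence a point of codimension two; such a point is trivially invariant, so all the blowing-ups are admissible, and each produces an exceptional rational curve $D_j\cong\mathbb P^1$ with $D_j\cdot D_j=-1$. Second, since $\operatorname{Sing}(\mathcal F)=\{\mathbf 0\}$ is isolated, every singular point created after the first blowing-up lies on the exceptional divisor; hence each center lies over $\mathbf 0$, the total exceptional divisor $E^N=\pi^{-1}(\mathbf 0)$ is connected, and a component that is dicritical (resp. non-dicritical) keeps this property under strict transform, since away from the new point center the morphism is an isomorphism and dicriticalness is a generic condition along the component.

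The heart of the proof is a dichotomy applied to each $\pi_j$. If $\pi_j$ is non-dicritical, then $D_j\cong\mathbb P^1$ is invariant with $D_j\cdot D_j=-1$, so by the Camacho--Sad index theorem \cite{Cam-S} one has $\sum_{P\in D_j}\mathrm{CS}(\mathcal F_j,D_j,P)=-1$; thus at least one point of $D_j$ carries a Camacho--Sad index of strictly negative real part and is therefore index-persistent. Applying Proposition \ref{prop:camachosad} to the remaining admissible blowing-ups $\pi_{j+1},\dots,\pi_N$, the final space $\mathcal M'$ still has an index-persistent, hence singular, point, contradicting full resolution. Consequently, under the full-resolution assumption, every blowing-up in the sequence must be dicritical.

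It remains to exploit this. In particular $\pi_1$ is dicritical, so the hypothesis forces $N\geq 2$. Then $E^N$ is a connected configuration of $N\geq 2$ rational curves, all of them dicritical; being connected with at least two components, two of them must meet, at a corner $Q$ with normal crossings. By Remark \ref{rk:nodicritical corners}, a simple regular point of a two-dimensional foliated space cannot be a corner of two dicritical components, as the number of dicritical components through it would then have to be strictly smaller than $2$. Hence $Q$ is a point of $M'$ that is not simple regular, contradicting full resolution and completing the contrapositive.

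I expect the main obstacle to be the clean justification of the reduction ``every blowing-up may be assumed dicritical''. It combines the global Camacho--Sad index formula, used to locate a negative-real-part singularity on any invariant exceptional $\mathbb P^1$, with the persistence of Proposition \ref{prop:camachosad}; the delicate point is that index-persistence must survive even when a later center coincides with the persistent point itself. This is exactly what Remark \ref{remark:index-persistent} secures: the index drops by only one unit under such a blowing-up, so a strictly negative real part remains strictly negative. Once this reduction holds, the corner argument via Remark \ref{rk:nodicritical corners} is immediate.
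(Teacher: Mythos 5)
Your proof is correct and takes essentially the same route as the paper's: the presence of a non-dicritical blowing-up yields, via the Camacho--Sad index theorem on the freshly created $(-1)$-curve, an index-persistent point whose persistence under the remaining admissible blowing-ups (Proposition \ref{prop:camachosad}) contradicts full resolution, while the all-dicritical case forces $N\geq 2$ and produces a corner of two dicritical components, excluded for simple regular points by Remark \ref{rk:nodicritical corners}. Your contrapositive packaging versus the paper's direct two-case split is an immaterial reorganization of the same argument.
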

\begin{proof}
Assume first that all the blowing-ups in $\pi$ are dicritical ones. Since there are at least two blowing-ups, we find a point $P'\in E'$ that is the intersection of two irreducible components of $E'=E'_{\operatorname{dic}}$. This point $P'$ cannot be a simple regular point for $\mathcal M'$.

Let us consider the case when there is at least one non-dicritical blowing-up. After the first non-dicritical blowing-up, we know that the sum of Camacho-Sad indices with respect to the exceptional divisor is equal to $-1$. Hence, there is at least one index-persistent point. In view of Proposition \ref{prop:camachosad}, we get a singular point for $\mathcal M'$.
\end{proof}

\begin{proof}[End of the proof of Theorem \ref{teo:dimensiontworadial}]
We assume that the origin is singular for  $\mathcal F$ and that $\mathcal M$ is a radial foliated space. By eliminating components of the divisor, we know that  $(({\mathbb C^2},\mathbf 0),\emptyset,\mathcal F)$ is also a radial foliated space, in view of the statements in Subsection \ref{Radial Foliated Spaces}. By Lemma  \ref{lema:oneblowing-up1}, we know that the single blowing-up of the origin
$$
\pi:(M',E')\rightarrow (({\mathbb C}^2,\mathbf 0),\emptyset)
$$
gives the only resolution sequence for $(({\mathbb C^2},\mathbf 0),\emptyset,\mathcal F)$. The blowing-up $\pi$ must be dicritical and the transformed foliated space has only simple regular points. The transformed foliation is transverse to the exceptional divisor in all the points. This implies that $\mathcal F$ has order equal to one and the initial part corresponds to a cart-wheel foliation; hence it is a cart-wheel foliation in view of classical results of linearization. For more details, see \cite{Can-C-D}.

Let us show finally that $E=E_{\operatorname{inv}}$. Indeed, if $E_{\operatorname{dic}}$ is not empty, the dicritical blowing-up of the origin is not a resolution sequence. This ends the proof of Theorem \ref{teo:dimensiontworadial}.
\end{proof}
Let us recall that in dimension two the centers in the adjusted resolution sequences are always isolated points. Moreover, single points have normal crossings with any normal crossings divisor. This allows us to state the following general result:

\begin{corollary}
\label{cor:anhadircomponentes}
Let $\mathcal M=(M,E,\mathcal F)$ be a two dimensional foliated space, where $M$ is compact or a germ over a compact set. The following properties are equivalent:
\begin{enumerate}
\item $\mathcal M$ is a radial foliated space.
\item The foliated space $(M,\emptyset,\mathcal F)$ is radial and the normal crossings divisor $E$ satisfies the next properties:
    \begin{enumerate}[a)]
    \item If $D$ is a component of $E$ such that $D\cap \operatorname{Sing}(\mathcal F)\ne\emptyset$, then $D$ is non-dicritical.
    \item Dicritical components are transverse to $\mathcal F$ at
    each point.
    \item Two dicritical components of $E$ are mutually disjoint.
    \end{enumerate}
\end{enumerate}
\end{corollary}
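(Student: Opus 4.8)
The plan is to reduce this global statement to a pointwise analysis, exploiting the facts emphasized just before the statement: in dimension two every adjusted resolution sequence consists of blowing-ups centred at isolated points, such centres automatically have normal crossings with any divisor, and a blowing-up of a point is always $E$-controlled. Since $M$ is compact (or a germ over a compact set), $\operatorname{Sing}(\mathcal F)$ is finite, so there are only finitely many points to examine. The backbone of both implications is the local characterization of Theorem \ref{teo:dimensiontworadial}, read at the germ $\mathcal M_P$ for each $P\in M$. Localizing a resolution sequence at $P$, as explained in Subsection \ref{Radial Foliated Spaces}, shows that if $\mathcal M$ is radial then each $\mathcal M_P$ is radial; conversely the single dicritical blowing-ups resolving the (finitely many) germs $\mathcal M_P$ have disjoint isolated centres and therefore glue into one global resolution sequence.

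For the implication $(1)\Rightarrow(2)$, I would first note that $(M,\emptyset,\mathcal F)$ is radial by the component-elimination property recorded in Subsection \ref{Radial Foliated Spaces}, recalling that in dimension two almost radial and radial coincide. To obtain a), take $P\in D\cap\operatorname{Sing}(\mathcal F)$ for a component $D$ meeting the singular locus; then $\mathcal M_P$ is radial with $P$ singular, so Theorem \ref{teo:dimensiontworadial}(b) applies and $E=E_{\operatorname{inv}}$ near $P$, forcing $D$ to be invariant, invariance being read at a single regular point. For b) and c), observe that a dicritical component cannot pass through a singular point by a), so every point of a dicritical component is a simple regular point of $\mathcal M$; the simple regular normal form $\mathcal F=(dx_1=0)$, $E\subset(x_1x_2=0)$ then forces a dicritical component to be $(x_2=0)$, hence transverse (b), while Remark \ref{rk:nodicritical corners} forbids a corner of two dicritical components (c).

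For $(2)\Rightarrow(1)$, I would verify the local radial condition at each $P$ and then assemble the global sequence. If $P\in\operatorname{Sing}(\mathcal F)$, radiality of $(M,\emptyset,\mathcal F)$ gives that $\mathcal F$ is a cart-wheel at $P$, while a) forces every component of $E$ through $P$ to be invariant, so $E=E_{\operatorname{inv}}$ near $P$ and Theorem \ref{teo:dimensiontworadial}(b) makes $\mathcal M_P$ radial. If $P$ is regular then it is simple regular for $(M,\emptyset,\mathcal F)$ by Remark \ref{radialregular}; in coordinates with $\mathcal F=(dx_1=0)$ the unique invariant curve-germ is $(x_1=0)$, so at most one invariant component passes through $P$, while c) allows at most one dicritical component, which is transverse by b), hence of the form $(x_2=0)$. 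Thus $E\subset(x_1x_2=0)$ and $P$ is simple regular for $\mathcal M$. Blowing up the finitely many cart-wheel points (one dicritical blowing-up each) then yields a resolution sequence for $\mathcal M$: the new exceptional divisors are dicritical and transverse, and the strict transforms of the invariant components of $E$ meet them only at admissible invariant--dicritical corners.

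The step I expect to require the most care is verifying, in $(2)\Rightarrow(1)$, that no forbidden configuration is created after blowing up the cart-wheel points --- in particular that the corners produced between the new dicritical exceptional divisor and the strict transforms of the invariant components of $E$ are genuinely simple regular and not dicritical--dicritical corners --- and, correspondingly, confirming that the finitely many local blowing-ups organize into one global admissible, $E$-controlled resolution sequence. Both reduce to bookkeeping of normal crossings at corners via Remark \ref{rk:nodicritical corners}, together with the fact that distinct irreducible components of a normal crossings divisor meet transversally, which is what rules out two invariant components sharing the single leaf through a regular point.
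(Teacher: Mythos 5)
Your proof is correct and takes essentially the same approach as the paper: the paper's entire proof reads ``Compactness allows us to work in a local way. Now the result follows from Theorem \ref{teo:dimensiontworadial}.'' Your write-up simply makes explicit the localization at each point, the normal-form bookkeeping at simple regular points via Remark \ref{rk:nodicritical corners}, and the gluing of the finitely many local dicritical blowing-ups into one global resolution sequence---details the paper leaves implicit.
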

\begin{proof}
Compactness allows us to work in a local way. Now the result follows from Theorem \ref{teo:dimensiontworadial}.
\end{proof}

\subsection{Radial Foliations Over the Projective Plane}
\label{Radial Foliations Over the Projective Plane}
Let us recall that the degree $d$ of a foliation $\mathcal F$ over the projective plane $\mathbb P^2_{\mathbb C}$ is the number of tangencies with a generic projective line (see \cite{Can-C-D}). In homogeneous coordinates $X_0,X_1,X_2$, such a foliation is given by an homogeneous differential form
$$
W=\sum_{i=0}^2A_i(X_0,X_1,X_2)dX_i,\quad \sum_{i=0}^2X_iA_i=0,
$$
where the coefficients $A_i$ are homogeneous polynomials of degree $d+1$, without common factor.

In particular, a foliation of degree zero is given by the projective lines passing through a common point $P$ which is the only singular point of the foliation.

In a more general setting, the number of singular points of a degree $d$ foliation over $\mathbb P^2_{\mathbb C}$, counted with the multiplicity given by the Milnor number, is equal to $d^2+d+1$. The reader can see \cite{Can-C-D} for an elementary proof of this fact.

This Subsection \ref{Radial Foliations Over the Projective Plane}  is devoted to prove the following statement:
\begin{proposition} \label{prop:radialinprojectiveplanes}
Let $\mathcal M=({\mathbb P^2_{\mathbb C}, E,\mathcal F})$ be a foliated space over the projective plane $\mathbb P^2_{\mathbb C}$. The following properties are equivalent:
\begin{enumerate}
\item $\mathcal M$ is a radial foliated space.
\item The foliation $\mathcal F$ has degree $0$. Moreover, the divisor $E_{\operatorname{inv}}$ is a union of at most two projective lines through the singular point $P$; on the other hand, the divisor $E_{\operatorname{dic}}$ is either empty or just one projective line not containing $P$.
\end{enumerate}
\end{proposition}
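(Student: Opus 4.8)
The plan is to combine the local classification of radial germs (Theorem~\ref{teo:dimensiontworadial}) with two global numerical invariants of a degree $d$ foliation on $\mathbb P^2_{\mathbb C}$: the number of singular points counted with Milnor number, which equals $d^2+d+1$, and the total Baum--Bott index, whose sum equals $c_1(N_{\mathcal F})^2=(d+2)^2$. The role of Theorem~\ref{teo:dimensiontworadial} is to force every singular point into a rigid local model, after which these two counts will over-determine the degree.

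For the implication $(1)\Rightarrow(2)$ I would first reduce to the bare foliation. By the stability properties collected in Subsection~\ref{Radial Foliated Spaces}, eliminating every component of the divisor keeps radiality, so $(\mathbb P^2_{\mathbb C},\emptyset,\mathcal F)$ is again radial and its germ at each singular point is radial. Theorem~\ref{teo:dimensiontworadial} then says each of these germs is a cart-wheel; in particular every singular point is non-degenerate with Milnor number one and local vector field conjugate to the radial one $x\partial_x+y\partial_y$. Consequently the singular points are exactly $d^2+d+1$ in number and each contributes a Baum--Bott index $(\lambda_1+\lambda_2)^2/(\lambda_1\lambda_2)=4$. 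The crux of the whole proposition is then the numerical identity $4(d^2+d+1)=(d+2)^2$, i.e. $3d^2=0$, which forces $d=0$; this is the step I expect to be the main obstacle, since the plain singular-point count $d^2+d+1$ is compatible with every $d$ and it is precisely the \emph{value} of the index at a cart-wheel that breaks the symmetry. Degree zero then gives that $\mathcal F$ is the pencil of lines through its unique singular point $P$.

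It remains to describe the divisor, which I would obtain from Corollary~\ref{cor:anhadircomponentes}, used as an equivalence in both directions. On one hand, any invariant component of $E$ is a union of leaves of the degree-zero pencil, hence a line through $P$, and normal crossings at $P$ allows at most two of them; on the other hand, by conditions (b) and (c) a dicritical component is everywhere transverse to the pencil and disjoint from the other dicritical ones, which forces it to avoid $P$ (a point of $E_{\operatorname{dic}}$ on $P$, or a tangency of the curve with a line through $P$, would destroy transversality) and to be a single line, since two projective lines always meet. This yields exactly the list in statement~(2). For the converse $(2)\Rightarrow(1)$ the same Corollary applies directly: $(\mathbb P^2_{\mathbb C},\emptyset,\mathcal F)$ is radial because a single dicritical blowing-up of $P$ leaves a foliation transverse to the exceptional line and non-singular everywhere, and the prescribed $E$ satisfies conditions (a), (b), (c) by construction. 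Apart from the index identity, all remaining steps are bookkeeping controlled by the local model and the Corollary.
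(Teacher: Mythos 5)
Your proposal is correct and follows essentially the same route as the paper: localize radiality to force cart-wheel germs at every singular point, combine the Milnor-number count $d^2+d+1$ with the Baum--Bott identity $\sum_P \operatorname{BB}(\mathcal F,P)=(d+2)^2$ and the value $\operatorname{BB}=4$ at a cart-wheel point to get $3d^2=0$, and then settle the divisor structure and the converse via Corollary~\ref{cor:anhadircomponentes}. The paper's proof is the same argument, with the divisor analysis phrased through conditions (a), (b), (c) of that corollary exactly as you do.
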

\begin{proof}
The fact that (2) implies (1) follows from the arguments in Subsection \ref{Germs of Radial Foliated Spaces in Dimension Two}. Let us show that (1) implies (2). Let us first show that $\mathcal F$ has degree $d=0$.
	
There is an homological invariant called Baum-Bott Index, associated to each point $P$ and a foliation $\mathcal F$ over a surface. It allows to compute the self-intersection number of the normal fiber bundle $\mathcal N_\mathcal F$ of the foliation as follows
\begin{equation} \label{eq:baumbott}
\mathcal N_\mathcal F\cdot\mathcal N_\mathcal F=\sum_{P\in \operatorname{Sing} \mathcal F}\operatorname{BB}(\mathcal F,P).
\end{equation}
(For details, see \cite{Bru}). On the other hand, we know that $\mathcal N_\mathcal F=\mathcal O(2+d)$ and hence the self-intersection is given by
$$
\mathcal N_\mathcal F\cdot\mathcal N_\mathcal F=(2+d)^2=4+4d+d^2.
$$

The Baum-Bott Index can be computed in the case of singular points with a non-nilpotent linear part as being the quotient between the square of the trace and the determinant of the linear part. In the case of a cart-wheel singular point we have that
$$
\operatorname{BB}(\mathcal F,P)=4.
$$

Note that we are dealing with a foliation $\mathcal F$ of degree $d$ such that all the singularities are of cart-wheel type. The Milnor number of a cart-wheel foliation is exactly equal to one. So $k=1+d+d^2$ counts the number of singular points in this case.

Then, Baum-Bott formula in Equation \eqref{eq:baumbott} says that
$$
4+4d+d^2=4k=4(1+d+d^2).
$$
Hence $d^2=4d^2$, that is  $d=0$.

Now, since the divisor $E$ must be non-dicritical through the only singular point $P$, it is the union of at most two lines through $P$ and some irreducible components not containing $P$.  If $D$ is a component of the divisor not containing $P$, it is necessarily dicritical, since the leaves are the lines through $P$. We necessarily have that $D$ is a projective line, otherwise, we find tangents with $\mathcal F$. There is at most one of that dicritical components, since two of them cannot meet (see Corollary \ref{cor:anhadircomponentes}).
\end{proof}

\subsection{Foliations over Hirzebruch Surfaces} We recall some known facts on Hirzebruch surfaces and their foliations. The reader can find more details in \cite{Gal-M-O}.

Recall that Hirzebruch surfaces are the fiber bundles over $\mathbb P^1_{\mathbb C}$ having fiber equal to $\mathbb P^1_{\mathbb C}$. They are classified by an index $\delta\in \mathbb Z_{\geq 0}$, each one being isomorphic to a surface $S_{\delta}$, that we describe below.

The Hirzebruch surface $S_{\delta}$ is given by an atlas
$$
\mathcal A_\delta=\{(U_{ij}, (x_{ij},y_{ij}))\}_{0\leq i,j\leq 1},
$$
where the four coordinate sets $U_{ij}$ are identified to $\mathbb C^2$, the coordinate changes are given by the following equations
\begin{equation} \label{eq:cambioHirzebruch}
\begin{array}{l}
x_{00}=x_{01}=1/x_{10}=1/x_{11},\\
y_{00}=1/y_{01}=x_{10}^\delta y_{10}=x_{11}^\delta / y_{11},
\end{array}
\end{equation}
and the intersections $U_{ij}\cap U_{k\ell}$ are defined by
$$
\begin{array}{lclcl}
	U_{00}\cap U_{01}&=&(y_{00}\ne0)&=&(y_{01}\ne0),\\
	U_{00}\cap U_{10}&=&(x_{00}\ne0)&=&(x_{01}\ne0),\\
	U_{00}\cap U_{11}&=&(x_{00}y_{00}\ne0)&=&(x_{11}y_{11}\ne0),\\
	U_{01}\cap U_{10}&=&(x_{01}y_{01}\ne0)&=&(x_{10}y_{10}\ne0),\\
	U_{01}\cap U_{11}&=&(x_{01}\ne0)&=&(x_{11}\ne0),\\
	U_{10}\cap U_{11}&=&(y_{10}\ne0)&=&(y_{11}\ne0).
\end{array}
$$

Let us describe now the field $\mathbb C(S_\delta)$ of rational funcions over $S_\delta$. Consider the polynomial ring $\mathbb C[X_0,X_1,Y_0,Y_1]$ as a bi-graded algebra where the bi-degrees of the variables are given as follows:
$$
X_0\mapsto (1,0),\; X_1\mapsto (1,0),\;Y_0\mapsto (0, 1),\; Y_1\mapsto (-\delta,1).
$$
This gives rise to the notion of {\em bi-homogeneous polynomial of bidegree (a,b)}. In this setting, the field of rational functions $\mathbb C(S_\delta)$ is given by the quotients of bi-homogeneous polynomials of same bi-degree. The rational functions $x_{ij}$ and $y_{ij}$  correspond to the following quotients:
\begin{eqnarray*}
x_{00}=X_1/X_0 ,\; y_{00}=X_0^\delta  Y_1/Y_0,\\
x_{10}= X_0/X_1 ,\; y_{10}=X_1^\delta Y_1/Y_0,\\
x_{01}= X_1/X_0,\; y_{01}=Y_0/ X_0^\delta  Y_1,\\
x_{11}= X_0/X_1 ,\; y_{11}= Y_0/ X_1^\delta  Y_1.
\end{eqnarray*}

The group of classes of divisors is generated by $F=(X_0=0)$ and $L=(Y_0=0)$ with the following intersection pairing
$$
F\cdot F=0,\quad F\cdot L=1,\quad L\cdot L=\delta.
$$
The curve $L_0=(Y_1=0)$ is the only curve with negative self-intersection $L_0\cdot L_0=-\delta$. For $d_1,d_2\geq 0$, the class $d_1F+d_2L$ is represented by $P=0$, where $P$ is a bi-homogeneous polynomial of bi-degree $(d_1,d_2)$.

Note that $L_0$ is covered by the charts $U_{00}$ and $U_{10}$. The corresponding changes of coordinates in this charts are
\begin{equation} \label{eq:cambiocartasL0}
x_{10}=1/x_{00}, \; y_{10}=x_{00}^{\delta}y_{00}.
\end{equation}

Let us give a practical way of describing the singular foliations over $S_\delta$. Let $\mathcal F$ be a foliation over $S_\delta$. Then, the tangent bundle $T_{\mathcal F}$ is linearly equivalent to $-d_1F-d_2L$ and we call $(d_1,d_2)$ the {\em bi-degree of $\mathcal F$}. We know that the normal bundle $N_\mathcal F$ is linearly equivalent to $aF+bL$, where
$$
a=d_1+2-\delta,\quad  b=d_2+2.
$$
The bi-degree is also defined associated to a differential monomial, by assigning the same bi-degree to $X_i$ and $dX_i$ and to $Y_j$ and $dY_j$. In this way, a foliation $\mathcal F$ such that the normal bundle $\mathcal N_\mathcal F$ is linearly equivalent to $aF+bL$ is defined by a bi-homogeneous differential form
\begin{equation}
\label{eq:bihomogeousform}
W=A_0dX_1+A_1dX_1+B_0dY_0+B_1dY_1
\end{equation}
of bi-degree $(a,b)$, such that the following Euler type equations hold:
\begin{equation*}
X_0A_0+X_1A_1-\delta Y_1B_1=0;\quad Y_0B_0+Y_1B_1=0.
\end{equation*}
(The reader should be able to give generators of $\mathcal F$ in the affine charts, just by a deshomogeneization of $W$).
\begin{lema} Let $\mathcal F$ be a foliation over $S_\delta$ of bi-degree $(d_1,d_2)$. Let $m(d_1,d_2)$ be the number of singular points of $\mathcal F$ counted with the multiplicity given by the Milnor number. Then, we have that
$$
m(d_1,d_2)= (d_2+1)[2(d_1+1)+\delta d_2]+2.
$$
\end{lema}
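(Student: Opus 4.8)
The plan is to compute $m(d_1,d_2)$ as a top Chern number, thereby reducing the enumeration of singular points to an intersection computation on $S_\delta$. A foliation $\mathcal F$ of bi-degree $(d_1,d_2)$ is given by a twisted $1$-form, i.e. a global section $\omega$ of $\Omega^1_{S_\delta}\otimes\mathcal N_\mathcal F$, obtained by sheafifying the bi-homogeneous form $W$ of Equation \eqref{eq:bihomogeousform}; its zero scheme is exactly $\operatorname{Sing}(\mathcal F)$. Since $\operatorname{Sing}(\mathcal F)$ has codimension at least two, it is a finite set on the surface $S_\delta$, and at each point $P$ the local length of this zero scheme equals $\dim_{\mathbb C}\mathcal O_{S_\delta,P}/(A,B)$, where $\omega=A\,dx+B\,dy$ is a local expression; this is precisely the Milnor number of $\mathcal F$ at $P$. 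Hence, by the principle expressing the number of zeros of a section of a rank-two bundle over a compact complex surface as its second Chern class, one gets
\begin{equation*}
m(d_1,d_2)=c_2\big(\Omega^1_{S_\delta}\otimes\mathcal N_\mathcal F\big).
\end{equation*}

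Next I would expand this second Chern class by the standard rule $c_2(E\otimes\mathcal L)=c_2(E)+c_1(E)\cdot c_1(\mathcal L)+c_1(\mathcal L)^2$ for the tensor of a rank-two bundle with a line bundle. Taking $E=\Omega^1_{S_\delta}$, for which $c_1(E)=K_{S_\delta}$ and $c_2(E)=c_2(T_{S_\delta})=\chi(S_\delta)=4$, and $\mathcal L=\mathcal N_\mathcal F$, this yields the intermediate identity
\begin{equation*}
m(d_1,d_2)=4+K_{S_\delta}\cdot\mathcal N_\mathcal F+\mathcal N_\mathcal F\cdot\mathcal N_\mathcal F.
\end{equation*}
From here everything is linear algebra in the group of divisor classes generated by $F$ and $L$, using the pairing $F\cdot F=0$, $F\cdot L=1$, $L\cdot L=\delta$, together with the class $\mathcal N_\mathcal F\equiv(d_1+2-\delta)F+(d_2+2)L$ already recorded before the statement.

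The one genuinely geometric input to pin down is the canonical class $K_{S_\delta}$. I would determine it by writing $K_{S_\delta}\equiv\alpha F+\beta L$ and applying adjunction to two rational curves: the fiber $F$, which gives $K_{S_\delta}\cdot F=-2-F\cdot F=-2$, hence $\beta=-2$; and the negative section $L_0\equiv L-\delta F$ (the unique curve with $L_0\cdot L_0=-\delta$), which gives $K_{S_\delta}\cdot L_0=-2-L_0\cdot L_0=\delta-2$ and forces $\alpha=\delta-2$. Thus $K_{S_\delta}\equiv(\delta-2)F-2L$. Substituting this and the class of $\mathcal N_\mathcal F$ into the intermediate identity and expanding with the pairing gives, after collecting terms, exactly $4+2d_1+2d_2+\delta d_2+2d_1d_2+\delta d_2^2=(d_2+1)[2(d_1+1)+\delta d_2]+2$. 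The main obstacle is not the final expansion, which is routine, but the careful bookkeeping of signs and twists in the Chern-class computation and the correct identification of $K_{S_\delta}$; as a consistency check one may verify $K_{S_\delta}^2=8$ and reconcile it with Noether's formula $\chi(\mathcal O_{S_\delta})=1$.
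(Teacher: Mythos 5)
Your proposal is correct and takes essentially the same route as the paper: the paper's "intrinsic" proof cites Brunella's formula $m(d_1,d_2)=c_2(S_\delta)-T_{\mathcal F}\cdot \mathcal N_{\mathcal F}$ with $c_2(S_\delta)=4$ and expands in the $(F,L)$ basis, which is literally your identity $m(d_1,d_2)=4+K_{S_\delta}\cdot\mathcal N_{\mathcal F}+\mathcal N_{\mathcal F}\cdot\mathcal N_{\mathcal F}$ after substituting $K_{S_\delta}=-(T_{\mathcal F}+\mathcal N_{\mathcal F})$ in the class group. The only difference is that you re-derive the counting formula from the section-of-$\Omega^1_{S_\delta}\otimes\mathcal N_{\mathcal F}$ principle and compute $K_{S_\delta}=(\delta-2)F-2L$ by adjunction, where the paper simply cites \cite{Bru}; your final expansion agrees with the stated value $(d_2+1)[2(d_1+1)+\delta d_2]+2$.
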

\begin{proof} It can be deduced by taking a specific foliation of bi-degree  $(d_1,d_2)$. It can also be obtained in a more intrinsic way as follows (see \cite{Bru}). We know that
$$
m(d_1,d_2)=c_2(S_\delta)-T_\mathcal F \cdot N_\mathcal F,
$$
where $c_2(S_\delta)$ is the second Chern class of $S_\delta$, which is known to be equal to  $4$. We obtain that
$
m(d_1,d_2)=(d_2+1)[2(d_1+1)+\delta d_2]+2$,
as desired.
\end{proof}
\subsection{Radial Foliations over Hirzebruch Surfaces}
\label{Radial Foliations over Hirzebruch Surfaces}
In this subsection we characterize radial foliated spaces over Hirzebruch surfaces.
\begin{proposition} 
\label{prop: radialesHirzebruch}
Let $\mathcal M=(S_\delta,\emptyset,\mathcal F)$ be a radial foliated space. We have that:
\begin{enumerate}
\item If $\delta\ne 0$, then $\mathcal F$ coincides with the only projective fibration defining $S_\delta$.
\item If $\delta=0$, then $\mathcal F$ is one of the  two possible projective fibrations defining $S_0=\mathbb P^1_\mathbb C\times\mathbb P^1_\mathbb C$.
\end{enumerate}
Note that, in any case $\operatorname{Sing}(\mathcal F)=\emptyset$.
\end{proposition}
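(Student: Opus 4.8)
The plan is to transport the whole question, via the two-dimensional germ classification, to a statement about a \emph{regular} (everywhere nonsingular) foliation on a rational surface, and then to invoke the fact that such foliations are fibrations.

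First I would pin down the local nature of the singular points. Since $\mathcal M=(S_\delta,\emptyset,\mathcal F)$ is radial, for each $P\in\operatorname{Sing}(\mathcal F)$ the localized germ $\mathcal M_P=((S_\delta,P),\emptyset,\mathcal F_P)$ is again a radial foliated space with empty divisor and singular origin, so by Theorem \ref{teo:dimensiontworadial} the foliation $\mathcal F_P$ is a cart-wheel foliation. In particular $P$ is desingularized by a single dicritical blowing-up, after which $\mathcal F$ becomes regular and transverse to the exceptional $\mathbb P^1_\mathbb C$. As $\operatorname{Sing}(\mathcal F)$ has codimension at least two, it is a finite set $\{P_1,\dots,P_k\}$. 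Blowing up these points once each yields a surface $S'=\mathrm{Bl}_{P_1,\dots,P_k}(S_\delta)$ and a transform $\mathcal F'$ that is regular everywhere: away from the $P_i$ the foliation was already nonsingular, over each $P_i$ the cart-wheel is resolved, and the exceptional curves are disjoint, so no new singular or non-simple corner points appear (cf.\ Remark \ref{rk:nodicritical corners}).

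The core step is the analysis of the regular foliation $\mathcal F'$ on the rational surface $S'$. A regular holomorphic foliation on a rational surface is a smooth rational fibration: either it is itself a fibration, or it is everywhere transverse to a ruling, but in the latter case the transverse monodromy factors through $\pi_1(\mathbb P^1_\mathbb C)=1$, forcing a product and reducing again to a fibration (see \cite{Bru}). Being regular, this fibration has no singular fibres, hence it is a $\mathbb P^1_\mathbb C$-bundle and $S'$ is minimal. Quantitatively, a smooth fibration satisfies $c_2(S')=\chi(\text{fibre})\cdot\chi(\text{base})$; a negative Euler characteristic of fibre or base would make $c_2(S')\le 0$, impossible for a rational surface, so fibre and base are both $\mathbb P^1_\mathbb C$ and $c_2(S')=4$. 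On the other hand $c_2(\mathrm{Bl}_{P_1,\dots,P_k}(S_\delta))=c_2(S_\delta)+k=4+k$, so $4+k=4$ and $k=0$. Thus $\operatorname{Sing}(\mathcal F)=\emptyset$ and $\mathcal F=\mathcal F'$ is itself a smooth rational fibration on $S_\delta$. To identify it, recall that for $\delta\neq 0$ the curve $L_0$ with $L_0\cdot L_0=-\delta<0$ is the unique irreducible curve of negative self-intersection, so $F$ generates the only square-zero extremal ray and $S_\delta$ has a unique $\mathbb P^1_\mathbb C$-bundle structure, namely the one defining $S_\delta$; for $\delta=0$ the surface $S_0=\mathbb P^1_\mathbb C\times\mathbb P^1_\mathbb C$ has two square-zero extremal classes, giving exactly the two product projections. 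This is the asserted dichotomy.

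The main obstacle is precisely this core step: justifying that the desingularized regular foliation is a fibration and that regularity then forbids any blowing-up. I expect this to carry the real content, and it can be made self-contained in the spirit of the projective-plane case by a Baum--Bott computation instead of the classification. Indeed, each cart-wheel point has Milnor number one and $\operatorname{BB}(\mathcal F,P)=4$, so the singular points number $k=m(d_1,d_2)$ and $N_\mathcal F\cdot N_\mathcal F=4k$; combining this with the Milnor formula of the previous lemma reduces the bi-degree $(d_1,d_2)$ to a short explicit list. The residual candidates with $k>0$ are then discarded with the Camacho--Sad formula along the negative section $L_0$, whose index sum must equal $L_0\cdot L_0=-\delta\le 0$ while each cart-wheel point lying on $L_0$ contributes a positive Camacho--Sad index (as in Proposition \ref{prop:camachosad} and Remark \ref{remark:index-persistent}), leaving only the fibration with $k=0$. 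In both routes the delicate point is controlling the configurations in which $L_0$ might fail to be invariant, and this is where I would concentrate the careful work.
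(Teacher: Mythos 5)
Your reduction to cart-wheel singularities, the Euler-characteristic bookkeeping, and the final identification of the ruling(s) via the negative section are all fine, but both of your routes stop exactly at the decisive step, and in both cases the missing piece is the actual content of the paper's proof. In Route 1, the core claim that a regular foliation on a rational surface is a smooth fibration is not established by your dichotomy ``either it is itself a fibration, or it is everywhere transverse to a ruling'': nothing forces a regular foliation to be transverse to a ruling (the tangency divisor of a foliation of bi-degree $(d_1,d_2)$ with the ruling has class $(d_1+2)F+d_2L$, which is effective in general), and worse, on $S'=\mathrm{Bl}_{P_1,\dots,P_k}(S_\delta)$ with $k\geq 1$ the induced ruling has reducible fibres whose singular points are automatically tangency points of any regular foliation, so ``everywhere transverse to a ruling'' is impossible precisely when $k\geq 1$; the dichotomy silently presupposes $k=0$, which is what you are trying to prove. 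What you would really need is Brunella's classification of regular foliations on compact complex surfaces, a theorem far heavier than anything the paper uses and not reducible to the remark that $\pi_1(\mathbb P^1_{\mathbb C})=1$.

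Route 2, your fallback, coincides with the paper's proof in its first half: Baum--Bott plus the Milnor-number lemma give the same short list of bi-degrees. But your proposed discard of the residual candidates by Camacho--Sad along $L_0$ is incomplete, exactly at the point you flag: since each cart-wheel point on an invariant curve contributes $+1$, Camacho--Sad only tells you that $L_0$ is \emph{not} invariant when $\delta>0$, after which the formula gives nothing; for $\delta=0$ there is no negative section at all; and the natural substitute, the tangency count $\operatorname{tang}(\mathcal F,L_0)=K_{\mathcal F}\cdot L_0+L_0\cdot L_0=d_1-\delta\geq 0$, kills Situation 3 ($d_1=-\delta-1$, $\delta>0$) but not Situation 4 ($d_1=2+\delta/2$ with $\delta\in\{2,4\}$) nor the $\delta=0$ cases. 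The paper closes precisely these cases by direct manipulation of the bi-homogeneous $1$-form $W$ (the Euler relations plus the bi-degree constraints force zero coefficients or a common factor), which is the work you defer. For what it is worth, your index-theoretic route can be completed without the paper's algebra: in Situation 4 one has $d_2=-1$, so any non-invariant fibre would have $\operatorname{tang}(\mathcal F,F)=d_2<0$, hence every fibre is invariant, hence $\mathcal F$ is the ruling and $d_2=-2$, a contradiction (and symmetrically with the second ruling when $\delta=0$); but this argument is not in your proposal, so as written there is a genuine gap.
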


The proof of the above Proposition \ref{prop: radialesHirzebruch} is based on the Baum-Bott formula introduced in Equation \eqref{eq:baumbott}.

Let $\mathcal F$ be a foliation over $S_\delta$ of bi-degree $(d_1,d_2)$ defined by a bi-homogeneous $1$-form $W$ as in Equation
\eqref{eq:bihomogeousform}
of bi-degree $(a,b)$, where
$$
a=d_1+2-\delta, \quad b=d_2+2.
$$
We assume that the foliated space $(S_\delta,\emptyset,\mathcal F)$ is a radial foliated space, that is, all the singularities are cart-wheel singularities. Since the singularities have Milnor number equal to one, we have exactly
$$
m(d_1,d_2)=(d_2+1)[2(d_1+1)+\delta d_2]+2
$$
singular points. Moreover, the Baum-Bott index of each singularity is equal to $4$. Hence the Baum-Bott formula stands as
$$
\mathcal N_\mathcal F\cdot\mathcal N_\mathcal F=2(d_1+2-\delta)(d_2+2)+\delta (d_2+2)^2= 4 m(d_1,d_2).
$$
We obtain the equation
\begin{equation}\label{eq:equationbaumbott}
6d_1d_2+4d_1+4d_2+3\delta d_2^2+2\delta d_2+8=0
\end{equation}
Let us show that Equation \eqref{eq:equationbaumbott} leads us to prove Proposition \ref{prop: radialesHirzebruch}.

If we isolate $d_1$, we get
$$
d_1=\frac{-(3\delta d_2^2+d_2(4+2\delta)+8)}{6d_2+4}=-\left(\frac{\delta d_2}{2}+1\right)+\frac{d_2-2}{3d_2+2}.
$$
From here, we distinguish two cases:
\begin{itemize}
\item Either $\delta$ or $d_2$ is even. Since $d_1\in \mathbb Z$, it must happen that $$(d_2-2)/(3d_2+2)\in \mathbb Z.$$ This only occurs if $d_2 \in \{-2,-1,0,2\}$.
\item Both $\delta$ and $d_2$ are odd. Since $d_1\in \mathbb Z$ we must have that $$(5d_2-2)/(6d_2+4)\in \mathbb Z.$$  This never happens.
\end{itemize}

\newpage
As a consequence, we need to study the following four situations:
\begin{enumerate}
	\item $d_2=-2$. In this case we have $d_1=\delta$ and $m(d_1,d_2)=0$.
	\item $d_2=0$. In this case we have $d_1=-2$ and $m(d_1,d_2)=0$.
 	\item $d_2=2$. In this case we have $d_1=-\delta-1$ and $m(d_1,d_2)=2$.
 	\item $\delta \in 2\mathbb Z$ and $d_2=-1$. In this case we would have $d_1=2+\delta/2$ and $m(d_1,d_2)=2$.
\end{enumerate}
Let us consider one by one these situations:
\begin{itemize}
\item{\em Situation 1 holds when $\mathcal F$ is the foliation given by the fibers.}
The bi-degree of $W$ must be $(2,0)$, this implies $B_0=B_1=0$ and the foliation is given by  $W= X_1dX_0-X_0dX_1$.

\item{\em Situation 2 holds if and only if $\delta=0$.} Assume that $\delta >0$. We are asking $B_0$ and $B_1$ to have bi-degrees $(-\delta,1)$ and $(0,1)$, respectively. We have necessarily that $B_0=cY_1$, with $c \in \mathbb C^*$ and thanks to the Euler-type condition $Y_0B_0+Y_1B_1=0$, we get $B_1=-cY_0$. This is incompatible with the equation
$$
X_0A_0+X_1A_1=\delta Y_1(-cY_0).
$$
When $\delta=0$, we are like in situation 1, but we obtain ``the other fibration'' given by $W=Y_1dY_0-Y_0dY_1$.

\item {\em Situation 3 never holds.} We are asking $A_0$ and $A_1$ to have bi-degree $(-2\delta,4)$. Assume first that $\delta >0$. We get necessarily that $A_i=Y_1^2\tilde A_i$ for $i=0,1$, with $\tilde A_i$ of bi-degree $(0,2)$. From the equation
$$
X_0A_0+X_1A_1=\delta Y_1B_1,
$$
we get that also $Y_1$ divides $B_1$. On the other hand, by the equation $Y_0B_0+Y_1B_1=0$ also $Y_1$ divides $B_0$ and this contradicts the fact that $W$ is without common factors.

When $\delta=0$ the conditions $X_0A_0+X_1A_1=0$ and $A_0, A_1$ having bi-degree $(0,4)$ together implies that  $A_0=A_1=0$. Condition $Y_0B_0+Y_1B_1=0$ and the fact that there are no common factors implies now that
$
W= Y_1dY_0-Y_0dY_0
$,
which implies $d_1=-2,d_2=0$. Contradiction.
\item{\em Situation 4 never holds.} We are asking $B_0$ and $B_1$ to have bi-degrees $(4-\delta/2,0)$ and $(4+\delta/2,0)$, respectively. This is compatible with the condition $Y_0B_0+Y_1B_1=0$ only if $B_0=0$ and $B_1=0$. Since there are no common factors, we get necessarily that $W= X_1dX_0-X_0dX_1$, which implies $d_1=\delta$ and $d_2=-2$. Contradiction.
\end{itemize}

\begin{corollary}
Let $(S_\delta,E,\mathcal F)$ be a radial foliated space over the Hirzebruch surface $S_\delta$, with $\delta \ne 0$. The foliation $\mathcal F$ is given by the fibers of the fibration $S_\delta \to \mathbb P^1_\mathbb C$. Moreover, we have that:
\begin{enumerate}
	\item The non-dicritical divisor $E_{\operatorname{inv}}$ is a finite union of invariant fibers.
	\item  The dicritical divisor $E_{\operatorname{dic}}$ is the union of at most two rational curves transversal to the fibers. If we have two components, they are not linearly equivalent.
\end{enumerate}
\end{corollary}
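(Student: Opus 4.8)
The plan is to deduce the statement from the two preceding results, Proposition \ref{prop: radialesHirzebruch} and Corollary \ref{cor:anhadircomponentes}, together with elementary intersection theory on $S_\delta$. Since $S_\delta$ is compact, Corollary \ref{cor:anhadircomponentes} applies: the hypothesis that $(S_\delta,E,\mathcal F)$ is radial yields at once that $(S_\delta,\emptyset,\mathcal F)$ is radial and that $E$ satisfies conditions a), b), c). From Proposition \ref{prop: radialesHirzebruch}, using $\delta\neq 0$, I get that $\mathcal F$ is the projective fibration $\rho\colon S_\delta\to\mathbb P^1_{\mathbb C}$ and that $\operatorname{Sing}(\mathcal F)=\emptyset$. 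In particular condition a) becomes vacuous, and the splitting $E=E_{\operatorname{inv}}\cup E_{\operatorname{dic}}$ is governed purely by invariance.

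First I would treat $E_{\operatorname{inv}}$. The leaves of $\mathcal F$ are exactly the fibers of $\rho$, so any irreducible invariant curve is tangent to $\mathcal F$ at each of its regular points and hence coincides with a fiber. As $E$ has finitely many components, this proves that $E_{\operatorname{inv}}$ is a finite union of invariant fibers, giving item (1).

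Next, each dicritical (that is, non-invariant) component $D$ is a smooth irreducible curve which, by condition b), is transverse to the fibers at every point. The restriction $\rho|_D\colon D\to\mathbb P^1_{\mathbb C}$ is then a finite unramified morphism of smooth projective curves, so Riemann--Hurwitz gives $2g(D)-2=-2\deg(\rho|_D)$, which forces $\deg(\rho|_D)=1$ and $g(D)=0$. Hence $D$ is a rational curve and a section of $\rho$, so $D\cdot F=1$. Writing divisor classes in the basis $F,L$ with $F\cdot F=0$, $F\cdot L=1$, $L\cdot L=\delta$ and $L_0=L-\delta F$, the condition $D\cdot F=1$ shows that the class of $D$ is $aF+L$ for some integer $a$, and a direct computation gives $D\cdot L_0=a$. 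Since $D$ is irreducible, either $D=L_0$, with $a=-\delta$, or $a=D\cdot L_0\geq 0$.

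Finally I would bound the number of dicritical components using condition c), which makes them pairwise disjoint. For two distinct dicritical components with classes $a_iF+L$ and $a_jF+L$ one has $0=D_i\cdot D_j=a_i+a_j+\delta$, that is $a_i+a_j=-\delta<0$. Since $L_0$ is the only irreducible curve with $a<0$, at most one component can be $L_0$ and every other one has $a\geq 0$; three pairwise disjoint sections would then produce a pair with $a_i+a_j\geq 0$, a contradiction. Hence there are at most two dicritical components, and when there are exactly two, one of them must be $L_0$, of class $L-\delta F$, while the other has $a=0$, of class $L$; as $\delta\neq 0$ these classes differ, so the two components are not linearly equivalent. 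This establishes item (2). The only genuinely computational point, and the step I expect to require the most care, is the intersection-theoretic count that simultaneously bounds the components by two and detects the non-equivalence; the identification of $\mathcal F$ with the fibration and of the transverse components with sections is then routine in view of the cited results.
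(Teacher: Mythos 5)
Your proposal is correct and takes essentially the same route as the paper: reduce everything to Corollary \ref{cor:anhadircomponentes} and Proposition \ref{prop: radialesHirzebruch}, then use intersection theory in the basis $F,L$ (with the negative curve $L_0$) to show each transverse component is a rational section and that pairwise disjointness forces at most two of them, non-equivalent when there are two. The paper simply asserts the facts you establish via Riemann--Hurwitz and the computation $D\cdot L_0=a$; your write-up supplies those details but the argument is the same.
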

\begin{proof}
A curve transversal to the fibers is always rational and a generator of the Picard group jointly with a fiber. Two of them are either linearly equivalent or not. In the first case, they meet at $\delta$ points. In the second case one of them is the curve having negative self-intersection and the two curves do not intersect.	Now, the result follows from Corollary \ref{cor:anhadircomponentes}.
\end{proof}

\section{Monoidal Blowing-ups}
In this section we introduce some useful features of blowing-ups centered at curves in ambient spaces of dimension three.
\subsection{Hirzebruch Tubes}
Let us consider an ambient space $(M,E)$ of dimension three.  Given an irreducible non-singular curve  $Y\subset M$, we say that $Y$ has {\em full normal crossings with $E$} if the following properties hold:
\begin{enumerate}
\item The curve $Y$ has normal crossings with $E$.
\item The curve $Y$ is contained in each irreducible component $D$ of $E$ such that $Y\cap D\ne\emptyset$.
\end{enumerate}

\begin{definition}
Let us consider the blowing-up
$$
\pi:(M',E')\rightarrow (M,E),
$$
with center a curve $Y$ having full normal crossings with $E$. A non-singular curve $Y'\subset M'$ is said to be a {\em $1$-infinitely near curve of $Y$} if $\pi$ defines an étale surjective morphism $Y'\rightarrow Y$ (that is, de morphism $Y' \to Y$ is a non-ramified covering) and $Y'$ has full normal crossings with $E'$.
\end{definition}

\begin{remark}
In the above situation, we have a ``vertical foliation'' on the divisor $\pi^{-1}(Y)$ given by the fibers $\pi^{-1}(P)$ over the points $P \in Y$. A $1$-infinitely near curve $Y'$ of $Y$ is contained in $\pi^{-1}(Y)$ and it is transverse to that foliation. The converse is also true. Namely, if we have an irreducible  non-singular curve $Y'\subset \pi^{-1}(Y)$ such that $Y'$ has full normal crossings with $E'$ and it is transverse to the vertical foliation, then $Y'$ is a $1$-infinitely near curve of $Y$.
\end{remark}

Let us define now the concept of {\em Hirzebruch tube $(M,E;Y)$ of order $(\alpha,\beta)$}, for $\alpha\in \mathbb Z_{\geq 0}$ and $\beta\in \mathbb Z_{\geq 1}$:
\newpage
\begin{definition} 
A {\em Hirzebruch tube of order $(\alpha,\beta)$}, with $\alpha\in \mathbb Z_{\geq 0}$ and $\beta\in \mathbb Z_{\geq 1}$, is a triple $(M,E;Y)$, where $(M,E)$ is a three-dimensional ambient space and $Y\subset E$ is an irreducible non-singular curve having full normal crossings with $E$. We ask the space $M$ to be covered by two charts $U_1,U_2$ isomorphic to $\mathbb C\times(\mathbb C^2,\mathbf 0)$, with respective coordinate functions $(x,y,z)$ and $(u,v,w)$ satisfying the following properties:
\begin{enumerate}
\item The coordinate changes are given by
$$
u=1/x,\quad  v=x^\beta y,\quad w=z/x^\alpha.
$$

\item The divisor $E$ has one or two irreducible components. If $E$ has a single irreducible component $E_1$, then
$$
E_1\cap U_1=(y=0), \quad E_1\cap U_2=(v=0).
$$
If $E$ has two irreducible components $E_1$ and $E_2$, then
$$
\begin{array}{l}
E_1\cap U_1=(y=0), \quad E_1\cap U_2=(v=0), \\
E_2\cap U_1=(z=0), \quad E_2\cap U_2=(w=0).
\end{array}
$$
The component $E_1$ will be called the {\em marked component of $E$}.
\item The curve $Y$ is given by the equations $Y\cap U_1=(y=z=0)$ and $Y\cap U_2=(v=w=0)$. Hence $Y$ is isomorphic to $\mathbb P^1_{\mathbb C}$ and the functions $x$, $u$ define the two standard affine charts of the projective line $Y$.
\end{enumerate}
\end{definition}
Let $(M,E)$ be a three dimensional ambient space and consider a compact irreducible curve $Y\subset M$. If $(\tilde M,\tilde E;Y)$ is a Hirzebruch tube, where $(\tilde M,\tilde E)$ denotes the germ of $(M,E)$ along $Y$, we also say that $(M,E;Y)$ is a {\em Hirzebruch tube}.

\begin{proposition}
\label{prop:alphabetaHirzebruch1}
Let $(M,E;Y)$ be a Hirzebruch tube of order $(\alpha,\beta)$. Consider the blowing-up $\pi:(M',E')\rightarrow (M,E)$ with center $Y$. Then we have the following properties:
\begin{enumerate}[a)]
\item The exceptional divisor $\pi^{-1}(Y)$ is isomorphic to the Hirzebruch surface $S_{\alpha+\beta}$. (Note that $\alpha+\beta\geq 1$).
\item Let $E'_1$ be the strict transform by $\pi$ of the marked component $E_1$ of $E$. The non-singular curve $L_0=E'_1 \cap \pi^{-1}(Y)$ is the unique irreducible curve with negative  self-intersection in $\pi^{-1}(Y)$.
\item Let $Y' \subset M'$ be a $1$-infinitely near curve of $Y$. Then, we have that $(M',E'; Y')$ is a  Hirzebruch tube.
\item The only non-singular foliation on $\pi^{-1}(Y)$ is given by the fibers of $\pi$ over the points of $Y$.
\end{enumerate}
\end{proposition}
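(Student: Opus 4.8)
The plan is to reduce every assertion to an explicit computation in the two standard charts of the monoidal transform, and to derive the statement on foliations from the surface results already established.

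I would first write the blowing-up of $Y$ over the chart $U_1$, where $Y\cap U_1=(y=z=0)$, through its two affine charts: the chart with coordinates $(x,y,z_1)$ and $z=yz_1$, in which the exceptional divisor is $(y=0)$, and the chart $(x,y_1,z)$ with $y=zy_1$, in which it is $(z=0)$; symmetrically over $U_2$ one uses $(u,v,w_1)$ with $w=vw_1$ and $(u,v_1,w)$ with $v=wv_1$. Inserting the tube relations $u=1/x$, $v=x^\beta y$, $w=z/x^\alpha$ produces the only computation that matters, the gluing of the fibre coordinates of $\pi^{-1}(Y)$ over the two affine charts of $Y$,
$$
w_1=u^{\alpha+\beta}z_1,\qquad\text{equivalently}\qquad v_1=x^{\alpha+\beta}y_1.
$$
Taking $x$, $u=1/x$ as the two affine coordinates on $Y\cong\mathbb P^1_{\mathbb C}$ and $y_1$, $v_1$ as fibre coordinates, this is precisely the change of charts \eqref{eq:cambiocartasL0} of the Hirzebruch surface $S_\delta$ with $\delta=\alpha+\beta$, and $\delta\geq1$ because $\beta\geq1$; this gives a). For b), the strict transform $E_1'$ of the marked component $E_1=(y=0)$ is $(y_1=0)$ in the chart $(x,y_1,z)$, so $L_0=E_1'\cap\pi^{-1}(Y)$ is the locus $(y_1=0)=(v_1=0)$, which under the above identification is the section $(Y_1=0)$ of $S_\delta$, i.e.\ the unique irreducible curve of negative self-intersection.

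For c), I would argue by the position of the section $Y'$ relative to $L_0$ and to $L_\infty=E_2'\cap\pi^{-1}(Y)$ (when $E_2$ exists), using that full normal crossings forces $Y'$ to lie inside every component of $E'$ that it meets; hence either $Y'\in\{L_0,L_\infty\}$, or $Y'$ is disjoint from $L_0$ and $L_\infty$. If $Y'=L_0$, the chart $(x,y_1,z)$ with transition $v_1=x^{\alpha+\beta}y_1$, $w=z/x^\alpha$ exhibits $(M',E';Y')$ as a Hirzebruch tube of order $(\alpha,\alpha+\beta)$ with marked component $E_1'$, and the case $Y'=L_\infty$ is symmetric, giving order $(\alpha+\beta,\beta)$ with marked component $\pi^{-1}(Y)$. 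For a section $Y'$ disjoint from $L_0$ I would write it as $z_1=p(x)$ in the chart $(x,y,z_1)$; the condition that $Y'$ be a section missing $L_0$ is exactly $\deg p\leq\alpha+\beta$, so that $q(u):=u^{\alpha+\beta}p(1/u)$ is a polynomial and $Y'$ reads $w_1=q(u)$ over $U_2$. Linearising by $\hat z_1=z_1-p(x)$ and $\hat w_1=w_1-q(u)$ turns the fibre gluing into $\hat w_1=u^{\alpha+\beta}\hat z_1$; together with the untouched relation $v=x^\beta y$ this presents $(M',E';Y')$ as a tube of order $(\alpha+\beta,\beta)$ with marked component $\pi^{-1}(Y)$. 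The hard part is precisely this last case: one must check that the linearising change is compatible across both charts of $Y'$ and leaves the two tube relations in normal form, which is where the degree bound on $p$ and the identity $q(u)=u^{\alpha+\beta}p(1/u)$ are used. I would also record the intersection count showing that, when $E_2$ is present, any two positive sections meet in $\delta>0$ points, so that no admissible $Y'$ can avoid both $L_0$ and $L_\infty$ and only $Y'=L_0,L_\infty$ remain.

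For d), a foliation $\mathcal G$ on $\pi^{-1}(Y)\cong S_{\alpha+\beta}$ without singular points makes $(S_{\alpha+\beta},\emptyset,\mathcal G)$ a foliated space all of whose points are simple regular; by Remark \ref{radialregular} it is almost radial, with empty resolution sequence, and therefore radial, since the two notions coincide in dimension two, so Proposition \ref{prop: radialesHirzebruch} applies. Since $\delta=\alpha+\beta\geq1\neq0$, that proposition forces $\mathcal G$ to be the unique projective fibration of $S_{\alpha+\beta}$, namely the ruling whose fibres are the $\pi^{-1}(P)$ with $P\in Y$; this is the vertical foliation by the fibres of $\pi$, which proves d).
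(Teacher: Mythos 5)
Your proof is correct and follows essentially the same route as the paper's: the same chart computation identifying $\pi^{-1}(Y)$ with $S_{\alpha+\beta}$ via the transition $v_1=x^{\alpha+\beta}y_1$ (matching Equation \eqref{eq:cambiocartasL0}), a case split on the position of $Y'$ relative to $L_0$ followed by the same linearizing change of coordinates $\hat z_1=z_1-p(x)$, $\hat w_1=w_1-q(u)$, and the same appeal to Proposition \ref{prop: radialesHirzebruch} for statement d). The one genuine divergence is in c): the paper pins down the equations of a curve $Y'\neq L_0$ by computing its bi-degree $(0,1)$ in the Picard group of $S_{\alpha+\beta}$ (from $F\cdot Y'=1$ and $L_0\cdot Y'=0$), whereas you obtain the graph form $z_1=p(x)$ with $\deg p\leq \alpha+\beta$ directly from the requirement that $u^{\alpha+\beta}p(1/u)$ extend holomorphically at $u=0$; your version is more elementary, dispensing with intersection theory except in the side remark that, when $E_2$ is present, only $L_0$ and $L_\infty$ survive (a remark the paper places after the proof, and which is not strictly needed since your construction handles any section disjoint from $L_0$). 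One small point you should make explicit, as the paper does: the étale covering $Y'\to Y\cong\mathbb P^1_{\mathbb C}$ is one-to-one because $\mathbb P^1_{\mathbb C}$ is simply connected; this is what entitles you to treat $Y'$ as a section, i.e.\ as a graph over $Y$, in the first place.
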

\begin{proof}
Since $Y$ is a projective line and $\pi$ induces a fibration on $\pi^{-1}(Y)$ with fibers isomorphic to $\mathbb P^1_{\mathbb C}$, we deduce that $\pi^{-1}(Y)$ is a Hirzebruch surface and the fibers define a non-singular foliation on it. If we show that the index of the Hirzebruch surface is greater than or equal to $1$, the statement d) holds as a consequence of Proposition \ref{prop: radialesHirzebruch}. Then, let us see that the index of the Hirzebruch surface $\pi^{-1}(Y)$ is precisely $\alpha+\beta\geq 1$ and that $L_0$ is the unique irreducible curve with self-intersection $-(\alpha+\beta)$.

The space $M'$ is covered by four charts $U_{11},U_{21},U_{12},U_{22}$ with res\-pective coordinates $(x_1,y_1,z_1)$, $(u_1,v_1,w_1)$, $(x_2,y_2,z_2)$ and $(u_2,v_2,w_2)$. The blowing-up morphism $\pi$ is given by the equations
\begin{equation*} \label{eq:blow-up}
\begin{array}{llllllllllll}
	x&=&x_1&=&x_2 &&& u&=&u_1&=&u_2 \\
	y&=&y_1z_1&=&y_2 &&& v&=&v_1w_1&=&v_2 \\
	z&=&z_1&=&y_2z_2 &&& w&=&w_1&=&v_2w_2
\end{array}
\end{equation*}
In these coordinates we have that $E'_1=(y_1=0)=(v_1=0)$ and
$$
\pi^{-1}(Y)=(z_1=0)=(y_2=0)=(w_1=0)=(v_2=0).
$$
The change of coordinates concerning the curve $L_0=E'_1 \cap \pi^{-1}(Y)$ is given by
$$
u_1=1/x_1, \quad v_1=x_1^{\alpha+\beta}y_1, \quad w_1=z_1/x^{\alpha}.
$$
With the notations in Equation \eqref{eq:cambiocartasL0}, we have that $x_1,y_1,u_1,v_1$ play the role of $x_{00},y_{00},x_{10},y_{10}$, respectively. This ends the proof of statements $a),b)$ and $d)$.

Let us prove statement c). When the curve $Y'$ is the special curve $L_0$, we are done by taking $\alpha'=\alpha$ and $\beta'=\alpha+\beta$, in view of the previous computations.

Assume now that the curve $Y'$ is different from $L_0$. A first remark is that $Y' \cap L_0=\emptyset$, hence, we have that $Y' \subset U_{12} \cup U_{22}$. Moreover, we have that $Y'$ is a projective line. Indeed, the curve $Y'$ is a non-ramified covering of the projective line $Y$ through the blowing-up; hence the covering is $1$ to $1$ and it is a projective line.

Let us see that $Y'$ is a curve in $\pi^{-1}(Y)$ of bi-degree $(0,1)$. Let $F$ be the linear equivalence class in $\pi^{-1}(Y)=S_{\alpha+\beta}$ of a fiber of the blowing-up. Moreover, let $L$ be the equivalence class of divisors such that $L\cdot L_0=0$ and $L\cdot F=1$ and $L\cdot L=\alpha+\beta$. We know that $F$ and $L$ generate the Picard group of $S_{\alpha+\beta}$ and then we can write $[Y']=aF+bL$, where $(a,b)$ is the bi-degree of $Y'$. Let us compute $(a,b)$.  We have seen that
$$
F\cdot [Y']=1.
$$
This implies that $F\cdot [Y']=b=1$. On the other hand, we know that $L_0\cdot Y'=0$, this implies that $$
0=L_0\cdot Y'=a+bL_0\cdot L=a.
$$
Hence, the bidegree of $Y'$ is $(0,1)$.  As a consequence, the curve $Y'$ is defined by $Y' \cap U_{12}=(\tilde z_2=0)$ and $Y' \cap U_{22}=(\tilde w_2=0)$, where
$$
\tilde z_2=z_2+\sum_{i=0}^{\alpha+\beta}a_i x_2^{\alpha+\beta-i}=0, \quad
\tilde w_2=w_2+\sum_{i=0}^{\alpha + \beta} a_i u_2^i=0.
$$
and the coordinates $(x_2,y_2,\tilde z_2)$, $(u_2,v_2,\tilde w_2)$ defined in $U_{12}$ and $U_{22}$, res\-pectively, give us the desired Hirzebruch tube around $Y'$ of degree $(\alpha',\beta')$ with $\alpha'=\alpha+\beta$ and $\beta'=\beta$.
\end{proof}
\begin{remark}
If $Y'$ is different from $L_0$ and $E$ has two components, then $Y'=\pi^{-1}(Y) \cap E'_2$, where $E'_2$ is the strict transform by $\pi$ of the non-marked component $E_2$; this is because $Y'$ is a $1$-infinitely near curve of $Y$. Both in the cases when $E$ has one or two components, the marked component around $Y'$ is given by the exceptional divisor $\pi^{-1}(Y)$. In the case when $Y'=L_0$, the new marked component is the strict transform $E'_1$ of the marked component $E_1$.
\end{remark}

\subsection{Vertical Blowing-ups}
\label{Vertical Blowing-ups}

This subsection is devoted to characterize the types of dicritical monoidal blowing-ups in terms of equations. We start with the definition of {\em vertical blowing-up}:

\begin{definition}
 \label{def:vertical}
   Consider a blowing-up $\pi:(M',E',\mathcal F)\rightarrow (M,E,\mathcal F)$ of ambient spaces with center an irreducible non-singular curve $Y\subset M$. We say that $\pi$ is {\em vertical} if and only if it is dicritical and the foliation induced by $\mathcal F'$ on $\pi^{-1}(Y)$ is the one given by the fibers $\pi^{-1}(P)$, when $P$ varies over the points of $Y$.
\end{definition}
\begin{remark} In the above definition we do not ask $Y$ to be invariant for $\mathcal F$, just to have normal crossings with $E$. On the other hand, the role of the exceptional divisor $E$ in the definition is only relevant in what it concerns with the normal crossings condition. Namely, the blowing-up $\pi$ is vertical if and only if the induced blowing-up
$$
(M',\pi^{-1}(Y),\mathcal F')\rightarrow (M,\emptyset,\mathcal F)
$$
is vertical. Moreover, since $Y$ is supposed to be connected, the condition of being vertical may be tested after localizing the blowing-up $\pi$ at a given point $P\in Y$.
\end{remark}

Consider a foliated space $\mathcal M=(({\mathbb C^n},\mathbf 0),\emptyset,\mathcal F)$. Take local coordinates $\mathbf z=(x,\mathbf y)$, with $\mathbf y=(y_2,y_3,\ldots,y_n)$ and consider the non-singular curve $Y$ given by
$$
Y=(y_2=y_3=\cdots=y_n=0).
$$
Assume that the foliation $\mathcal F$ is generated by the $1$-form
$$
\omega=a_1(\mathbf z)dx+\sum_{i=2}^na_i(\mathbf z)dy_i=a_1(\mathbf z)dx+\sum_{i=2}^ny_ia_i(\mathbf z)\frac{dy_i}{y_i},
$$
where the coefficients $\{a_i(\mathbf z)\}_{i=1}^n$ are without common factor.

Let
$
\pi:\mathcal M'=(M',D,\mathcal F')\rightarrow\mathcal M
$
be the blowing-up with center $Y$, where $D=\pi^{-1}(Y)$ is the exceptional divisor. Let us characterize under what conditions $\pi$ is a dicritical blowing-up and$/$or a vertical blowing-up.

Given a germ of holomorphic function $f(\mathbf z)$, we can write
$$
\textstyle
f(\mathbf z)=\sum_{s=0}^\infty F_s(x;\mathbf y),
$$
where each $F_{s}(x;\mathbf y)$ is a homogeneous polynomial of degree $s$ in the variables $\mathbf y$. The generic order $\nu_Y(f(\mathbf z))$ is defined to be
$$
\nu_Y(f(\mathbf z))=\min\{s\geq 0;\; F_s(x;\mathbf y)\ne 0 \}.
$$
When $f(\mathbf z)$ is identically zero, we put $\nu_Y(f(\mathbf z))=\infty$.

Denote by $r$ the ``log-generic order'' of $\omega$ along $Y$ given by
\begin{eqnarray*}
	r&=&\min\{\nu_Y(a_1(\mathbf z)), \nu_Y(y_2a_2(\mathbf z)), \ldots, \nu_Y(y_na_n(\mathbf z))\}= \\
	&=& \min\{\nu_Y(a_1(\mathbf z)), \nu_Y(a_2(\mathbf z))+1, \ldots, \nu_Y(a_n(\mathbf z))+1\} .
\end{eqnarray*}
Let us write
$
p(\mathbf z)=\sum_{i=2}^ny_ia_i(\mathbf z)
$.
There are two possibilities:
$$
\operatorname{\textbf{NDic:}}\;  \nu_Y(p(\mathbf z))=r, \qquad \operatorname{\textbf{Dic:}}\; \nu_Y(p(\mathbf z))\geq r+1.
$$
We divide the case \textbf{Dic} into two options:
\begin{description}
\item[Dic-v] $\nu_Y(y_ia_i(\mathbf z)) \geq r+1$ for all $i=2,3,\ldots,n$. Note that, in this case, we have that $\nu_Y(a_1(\mathbf z))=r$.
\item [Dic-nv] $\nu_Y(y_{i_0}a_{i_0}(\mathbf z))=r$, for some $i_0\in \{2,3,\ldots,n\}$.
\end{description}
Note that in the case {\bf Dic-nv} there are in fact at least two different indices $2 \leq j_0 < i_0 \leq n$ such that $\nu_Y(y_{j_0}a_{j_0}(\mathbf z))=\nu_Y(y_{i_0}a_{i_0}(\mathbf z))=r$, hence we can take $i_0 \geq 3$.
\begin{lema}
\label{lema:dicriticalnotdicritical}
We have the following equivalences:
\begin{enumerate}
\item $\pi$ is non-dicritical if and only if we are in case {\bf NDic}.
\item $\pi$ is dicritical if and only if we are in case {\bf Dic}.
\item  $\pi$ is vertical if and only if we are in case {\bf Dic-v}.
\item  $\pi$ is dicritical non-vertical if and only if we are in case {\bf Dic-nv}.
\end{enumerate}
\end{lema}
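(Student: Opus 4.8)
The plan is to compute the total transform $\pi^*\omega$ in the standard charts of the monoidal blowing-up of $Y=(y_2=\cdots=y_n=0)$ and to read off from it both the invariance of the exceptional divisor $D=(y_k=0)$ and, when $D$ is dicritical, the foliation it induces on $D$. First I set up, for each $k\in\{2,\ldots,n\}$, the chart with coordinates $(x,y_k,(s_i)_{i\neq k})$ in which $y_i=y_k s_i$ for $i\neq k$ and $s_k=1$, so that $D=(y_k=0)$. Using $d(y_k s_i)=s_i\,dy_k+y_k\,ds_i$, a direct computation gives
$$
\pi^*\omega=a_1\,dx+\Big(\sum_{i=2}^n s_i a_i\Big)dy_k+y_k\sum_{i\neq k}a_i\,ds_i,
$$
where the $a_i$ are now evaluated at $(x,y_k s_2,\ldots,y_k,\ldots,y_k s_n)$. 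The key observation is that the coefficient of $dy_k$ is exactly $p(\mathbf z)/y_k$, since $p(\mathbf z)=\sum_{i=2}^n y_i a_i=y_k\sum_{i=2}^n s_i a_i$ after substitution.

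Then I track the power of $y_k$. A homogeneous polynomial of degree $s$ in $\mathbf y$ becomes $y_k^s$ times its dehomogenization in the $s_i$; since dehomogenizing a nonzero form by setting one variable equal to $1$ is injective, the $y_k$-adic order of $a_i$ after substitution equals $\nu_Y(a_i)$. Hence the three families of coefficients have $y_k$-orders $\nu_Y(a_1)$ (for $dx$), $\nu_Y(p)-1$ (for $dy_k$) and $\nu_Y(a_i)+1$ (for $ds_i$), and the generator $\omega'=\pi^*\omega/y_k^{m}$ of $\mathcal F'$ is obtained with $m$ the minimum of these. Writing $\omega'=b_1\,dx+b_k\,dy_k+\sum_{i\neq k}c_i\,ds_i$, recall that $D$ is non-dicritical precisely when $b_1$ and all the $c_i$ are divisible by $y_k$, i.e. when the restriction $\omega'|_D=b_1|_{y_k=0}\,dx+\sum_{i\neq k}c_i|_{y_k=0}\,ds_i$ (obtained by setting $y_k=0$, so that $dy_k$ drops out) vanishes. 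When $D$ is dicritical, the induced foliation is the one defined by this same $\omega'|_D$; since the vertical fibre foliation on $D$ is exactly $dx=0$, verticality amounts to all the coefficients $c_i|_{y_k=0}$ vanishing.

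Now I run the three mutually exclusive and exhaustive cases. In case \textbf{NDic} ($\nu_Y(p)=r$) the $dy_k$-order $r-1$ is strictly smallest, so $m=r-1$ and every $dx$- and $ds_i$-coefficient of $\omega'$ stays divisible by $y_k$; thus $\omega'|_D=0$ and $D$ is invariant. In case \textbf{Dic-v} (all $\nu_Y(y_i a_i)\geq r+1$, which forces $\nu_Y(a_1)=r$) one gets $m=r$, attained only by the $dx$-term, so $b_1|_{y_k=0}\neq0$ while every $c_i$ vanishes on $D$; hence $\omega'|_D=b_1|_{y_k=0}\,dx$, the vertical foliation $dx=0$. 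In case \textbf{Dic-nv} some $\nu_Y(y_{i_0}a_{i_0})=r$; choosing a chart with $k\neq i_0$ (possible because $\nu_Y(p)\geq r+1>r$ forces the minimal order $r$ to be realized by at least two of the terms $y_i a_i$, so we may take $i_0\geq 3$) I again get $m=r$, now with $c_{i_0}|_{y_k=0}\neq 0$; hence $D$ is dicritical and $\omega'|_D$ has a nonzero $ds_{i_0}$-component, so the induced foliation is not $dx=0$ and $\pi$ is dicritical non-vertical.

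Finally I conclude. The three hypotheses \textbf{NDic}, \textbf{Dic-v}, \textbf{Dic-nv} partition all possibilities, and the three conclusions (non-dicritical; vertical; dicritical non-vertical) also partition all possibilities for $\pi$; the forward implications just proved therefore establish the four equivalences simultaneously. The main obstacle is the order bookkeeping after the chart substitution: precisely the justification that the generic orders $\nu_Y$ give the \emph{exact} $y_k$-divisibility, via the injectivity of dehomogenization, together with the cancellation argument in case \textbf{Dic-nv} guaranteeing that at least two indices realize the minimal order, which is what permits the choice $k\neq i_0$.
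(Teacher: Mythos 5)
Your proof is correct and takes essentially the same route as the paper's: both compute $\pi^*\omega$ in a standard chart of the blowing-up, track the $y_k$-adic orders of the coefficients via the log-generic orders $\nu_Y$, divide by the maximal power of the exceptional equation, and read off invariance of $D$ and verticality from the restriction of $\omega'$ to $D$, using in case \textbf{Dic-nv} the same observation that at least two indices realize the minimal order, so that the relevant index can be taken different from the chart index. The only differences are expository: you make explicit the injectivity-of-dehomogenization bookkeeping and the partition argument turning the forward implications into the four equivalences, both of which the paper leaves implicit.
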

\begin{proof}
The four equivalences can be read in any of the $n-1$ standard charts of the blowing-up $\pi$. Take the chart that is given in equations by $\mathbf z'=(x,\mathbf y')$, where
	$$
	y_2=y'_2,\quad y_j=y'_2y'_j,\; j=3,4,\ldots,n.
	$$
	Recall that $y'_2=0$ is an equation of  the exceptional divisor $D=\pi^{-1}(Y)$ in this chart.
	The pull-back $\pi^*\omega$ is given by
	$$
	\pi^*\omega=a_1(\mathbf z)dx+\frac{p(\mathbf z)}{y'_2}dy'_2+y'_2\sum_{\ell=3}^ny'_\ell a_{\ell}(\mathbf z)dy'_\ell.
	$$
	If we are in the case ${\bf NDic}$, the maximum power of the exceptional divisor that divides $\pi^*\omega$ is ${y'_2}^{r-1}$ an thus the transformed foliation $\mathcal F'$ is locally generated by
	\begin{eqnarray*}
		\omega'&=&\frac{\pi^*\omega}{{y'_2}^{r-1}}
		=y_2'\frac{a_1(\mathbf z)}{{y'_2}^r}dx+
		\frac{p(\mathbf z)}{{y'_2}^{r}}dy'_2+
		y'_2\sum_{\ell=3}^ny'_\ell\frac{ a_{\ell}(\mathbf z)}{{y'_2}^{r-1}}dy'_\ell.
	\end{eqnarray*}
In this case, we have that $y'_2$ divides the coefficients of $\omega'$ for
$$
dx,dy_3',dy'_4,\ldots, dy_n'.
$$
Hence the exceptional divisor $(y'_2=0)$ is invariant and thus the blowing-up $\pi$ is non-dicritical.
	
	Assume now that we are in case {\bf Dic}. Then, the maximum power of the exceptional divisor that divides $\pi^*\omega$ is ${y'_2}^r$ an thus the transformed foliation $\mathcal F'$ is locally generated by
	\begin{eqnarray*}
		\omega'=\frac{\pi^*\omega}{{y'_2}^r}
		&=&\frac{a_1(\mathbf z)}{{y'_2}^r}dx+
		\frac{p(\mathbf z)}{{y'_2}^{r+1}}dy'_2+
		\sum_{\ell=3}^ny'_\ell\frac{ a_{\ell}(\mathbf z)}{{y'_2}^{r-1}}dy'_\ell
		\\
		&=&a'_1(\mathbf z')dx+\sum_{i=2}^n a'_i(\mathbf z')dy'_i.
	\end{eqnarray*}
 At least one of the coefficients $a'_1(\mathbf z'), a'_3(\mathbf z'),a'_4(\mathbf z'),\ldots,a'_n(\mathbf z')$ is not divisible by $y'_2$. Hence the blowing-up $\pi$ is dicritical.

 Assume now that we are in case {\bf Dic-nv}. We have that there is $i_0\in \{3,4,\ldots,n\}$ such that $y'_2$ does not divide  the coefficient  $a'_{i_0}({\mathbf z'})$.  For $\pi$ being vertical we need the foliation $\mathcal G$ to be defined by $dx=0$, thus $y'_2$ should divide $a'_3({\mathbf z'}),a'_4({\mathbf z'}),\ldots,a'_n({\mathbf z'})$, which is not happening.

If we are in the case {\bf Dic-v}, we have that $y'_2$ divides $a'_i({\mathbf z'})$, for all $i=3,4,\ldots,n$ and $y'_2$ does not divide $a'_1(\mathbf z')$.  The restriction of $\omega'$ to $\pi^{-1}(Y)$ is given by
	$$
	a'_1(x;0,y'_3,y'_4,\ldots,y'_n)dx=0.
	$$
Then, the foliation $\mathcal G=\mathcal F'|_D$ is exactly $dx=0$. That is, the blowing-up $\pi$ is vertical.
\end{proof}
\begin{proposition}
\label{prop:verticalblowingups}
	Let $
	\pi:\mathcal M'=(M',E',\mathcal F')\rightarrow \mathcal M=(M,E,\mathcal F)
	$ be a vertical blowing-up with center $Y$. Assume that all the points in $\pi^{-1}(Y)$ are simple regular points for $\mathcal M'$. Then $Y$ is not invariant for $\mathcal F$.
\end{proposition}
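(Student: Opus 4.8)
The plan is to argue by contradiction. Suppose that $Y$ is invariant for $\mathcal F$; I will produce a point of $\pi^{-1}(Y)$ that is not simple regular, contradicting the hypothesis. Since verticality may be tested after localizing $\pi$ at a point of $Y$, I place myself in the local coordinates $\mathbf z=(x,\mathbf y)$ of Lemma~\ref{lema:dicriticalnotdicritical}, with $Y=(y_2=\cdots=y_n=0)$ and $\omega=a_1dx+\sum_{i=2}^na_idy_i$. The first step is the remark that $Y$ is invariant if and only if $i_Y^*\omega=a_1(x,\mathbf 0)\,dx=0$, i.e. $\nu_Y(a_1)\ge 1$; and since $\pi$ is vertical we are in case \textbf{Dic-v}, where $\nu_Y(a_1)=r$. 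Thus the invariance hypothesis is exactly $r\ge1$.

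Next I would read off the trace of the transformed form on the (dicritical) exceptional divisor $D=\pi^{-1}(Y)$. Starting from $\omega'=\pi^*\omega/(y_2')^{r}$ as in the proof of Lemma~\ref{lema:dicriticalnotdicritical}, and using that in case \textbf{Dic-v} the exceptional equation $y_2'$ divides the coefficient of $dy_\ell'$ for every $\ell\ge 3$, the restriction to $D$ kills all those terms and leaves
$$
i_D^*\omega'=A_1^{(r)}(x;1,y_3',\ldots,y_n')\,dx ,
$$
where $A_1^{(r)}$ is the homogeneous component of degree $r$ of $a_1$ in the variables $\mathbf y$. This is the trace form defining the induced fibration $\mathcal F'|_D$, and its zero set is exactly the locus where $\mathcal F'$ is tangent to $D$.

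The heart of the argument is to show that this tangency locus is nonempty and that none of its points can be simple regular. Fix $P_0=(x_0,\mathbf 0)\in Y$ with $A_1^{(r)}(x_0;\cdot)\not\equiv0$, which is possible because $A_1^{(r)}\ne0$. On the fiber $\pi^{-1}(P_0)$, a projective line $\mathbb P^1$, the form $A_1^{(r)}(x_0;\mathbf y)$ is a nonzero homogeneous polynomial of degree $r\ge1$, hence it vanishes at some point $P'$. There $a_1'(P')=0$, so $\omega'_{P'}$ is a multiple of $dy_2'$ or is zero; in the first case the leaf of $\mathcal F'$ through $P'$ is tangent to $D$, in the second case $P'\in\operatorname{Sing}(\mathcal F')$.

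The last step, which I expect to be the main (if mild) obstacle, is to rule out $P'$ being simple regular in either case. If $P'$ is singular this is immediate. Otherwise $\mathcal F'$ is tangent to $D$ at $P'$; but at a simple regular point there are coordinates with $\mathcal F'=(dx_1=0)$ and $E'\subset(x_1\cdots x_n=0)$, and the component $D$ must be some $(x_j=0)$ with $j\ne1$, since $D$ is not invariant and so cannot be the leaf $(x_1=0)$. For such $j$ the foliation $\mathcal F'$ is transverse to $D$, contradicting the tangency. Hence $P'\in\pi^{-1}(Y)$ is not simple regular, which is the contradiction sought, and therefore $Y$ is not invariant. The points I would be most careful about are the \textbf{Dic-v} normalization giving the clean trace form above, and the transversality-versus-tangency dichotomy at simple regular points lying on a dicritical divisor; once these are in place, the existence of the tangency point is merely the fact that a homogeneous form of positive degree on $\mathbb P^1$ has a zero.
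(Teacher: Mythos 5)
Your proposal is correct and takes essentially the same route as the paper's proof: both reduce to the local \textbf{Dic-v} situation of Lemma \ref{lema:dicriticalnotdicritical}, translate invariance of $Y$ into $r=\nu_Y(a_1)\geq 1$, and then exhibit a point of $\pi^{-1}(Y)$ where the transformed $1$-form collapses to $a_2'\,dy_2'$, so that the point is either singular or a tangency with the dicritical exceptional divisor and hence not simple regular. The only differences are presentational: you locate the bad point as a zero of the homogeneous form $A^1_r$ on a single fiber (which is $\mathbb P^{n-2}$, a projective line only when $n=3$) instead of the paper's chartwise dehomogenization, and you spell out the tangency-versus-transversality dichotomy at simple regular points that the paper leaves implicit.
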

\begin{proof}
It is enough to consider the case when $M=(\mathbb C^n,\mathbf 0)$, with $n\geq 3$, and $E=\emptyset$. Let us take the notations preceding Lemma \ref{lema:dicriticalnotdicritical} and the ones in the proof of that lemma. We are in case {\bf Dic-v}.

Now, let us show that the hypothesis that all the points in $\mathcal M'$ are simple regular points implies that $r=0$ and hence $a_1(\mathbf z)$ is a unit in a generic point of $Y$. This implies that $Y$ is not invariant.
	
Let us write
\begin{eqnarray*}
a_1(\mathbf z)=a_1(x;\mathbf y)=\sum_{s\geq r} A^1_s(x;\mathbf y),
  \\
  p(\mathbf z)=p(x;\mathbf y)= \sum_{s\geq r+1} P_s(x;\mathbf y),
\end{eqnarray*}
where the $A^1_s(x;\mathbf y), P_s(x;\mathbf y)$ are homogeneous polynomials of degree $s$ in the variables $\mathbf y$ and $A^1_r(x;\mathbf y)\ne 0$. Note that
	\begin{eqnarray}
\label{eq:verticaldic}
a_1'(\mathbf z')&=&
	\frac{a_1(\mathbf z)}{{y_2'}^r}=
	\frac{A^1_r(x;\mathbf y)}{{y_2'}^r}+y'_2\left(\cdots\right),\\
\label{eq:verticaldic2}
	a'_2(\mathbf z')&=&\frac{p(\mathbf z)}{{y_2'}^{r+1}}=
	\frac{P_{r+1}(x;\mathbf y)}{{y_2'}^{r+1}}+y'_2\left(\cdots\right), \\
\label{eq:verticaldic3}
	a'_j(\mathbf z')&=&y'_2(\cdots), \; j=3,4,\ldots,n.
\end{eqnarray}
	Assume by contradiction that $r\geq 1$. Up to a change of chart in the blowing-up, we can assume that there is $1\leq t\leq r$ such that
\begin{eqnarray*}
\frac{A^1_r(x;\mathbf y)}{{y'_2}^r}&=& A^1_r(x;1,y'_3,y'_4,\ldots,y'_n)
\end{eqnarray*}
is a polynomial of positive degree $t$ in the variables $y'_3,y'_4,\ldots, y'_n$. We conclude that the set
\begin{equation*}
\label{eq:nosimples}
H=(a'_1(\mathbf z')=y'_2=0)=(A^1_r(x;1,y'_3,y'_4,\ldots,y'_n)=y'_2=0)
\end{equation*}
is non-empty. Let $P$ be a point in $H$. In view of Equations \eqref{eq:verticaldic}, \eqref{eq:verticaldic2} and \eqref{eq:verticaldic3}, the $1$-form $\omega'$ evaluated at $P$ is given by
$$
\omega'(P)=a'_2(P)dy'_2\vert_P.
$$
The hypothesis that $\mathcal F'$ is non-singular at $P$ implies that  $a'_2(P)\ne 0$; but this means that $\mathcal F'$ is tangent at $P$ to the dicritical component $y'_2=0$ and hence $P$ is not a simple regular point for $\mathcal M'$. This is the desired contradiction.

	We conclude that $r=0$ and hence $Y$ is not invariant.
\end{proof}
\begin{proposition}
\label{prop:vertical2}
Let $\mathcal M=(M,E,\mathcal F)$ be a foliated space of dimension $n\geq 3$ and let $\pi:\mathcal M'=(M',E',\mathcal F')\rightarrow \mathcal M=(M,E,\mathcal F)$ be a dicritical admissible blowing-up with center a curve $Y$. Assume that the foliation $\mathcal G=\mathcal F'|_{\pi^{-1}(Y)}$ is not singular and that there is a point $Q\in Y$ such that $\pi^{-1}(Q)$ is invariant for $\mathcal G$. Then, the morphism $\pi$ is a vertical blowing-up.
\end{proposition}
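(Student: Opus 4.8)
The plan is to work on the exceptional divisor $D=\pi^{-1}(Y)$ and to exploit that, for a blowing-up centered at a smooth curve, the fibers $\pi^{-1}(P)$ are \emph{compact} projective spaces $\mathbb P^{n-2}$. Since $\pi$ is dicritical by hypothesis, to prove it vertical it suffices to show that the induced foliation $\mathcal G=\mathcal F'|_{D}$ is the fiber foliation. By the remark following Definition~\ref{def:vertical}, verticality may be tested after localizing $\pi$ at $Q$ and after replacing $E$ by $\emptyset$; so I would reduce to the local model $(M,E)=((\mathbb C^n,\mathbf 0),\emptyset)$, with $Y$ the $x$-axis and $Q=\mathbf 0$, in the notation preceding Lemma~\ref{lema:dicriticalnotdicritical}. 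There $\pi\colon D\to Y$ is a $\mathbb P^{n-2}$-bundle over (a germ of) $Y$, the foliation $\mathcal G$ is a nonsingular holomorphic codimension-one foliation on the connected manifold $D$, and $F_0=\pi^{-1}(Q)\cong\mathbb P^{n-2}$ is compact.

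First I would observe that $F_0$ is a leaf of $\mathcal G$: it is an irreducible invariant hypersurface of $D$ (its dimension $n-2$ equals $\dim D-1$) and $\mathcal G$ is nonsingular, so the connected smooth variety $F_0$ is a single leaf. The decisive structural input is that $F_0\cong\mathbb P^{n-2}$ is simply connected, whence the holonomy of this leaf is trivial. I would then invoke Reeb's local stability theorem: a compact leaf with trivial (in particular finite) holonomy has a saturated neighborhood $U\subset D$ that is a product foliated by compact leaves, each biholomorphic to $F_0$. Thus every leaf of $\mathcal G$ meeting $U$ is a compact complex submanifold of $D$ lying close to $F_0$.

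Next I would identify these nearby leaves with fibers. If $L\subset U$ is such a compact leaf, then $\pi|_L\colon L\to Y$ is a holomorphic map from a compact connected complex manifold into $Y$, which near $Q$ is biholomorphic to a disc in $\mathbb C$; hence $\pi|_L$ is constant. Therefore $L\subseteq\pi^{-1}(P)$ for some $P$, and comparing dimensions and irreducibility gives $L=\pi^{-1}(P)$. So the leaves of $\mathcal G$ through $U$ are exactly fibers of $\pi$, i.e. $\mathcal G$ agrees with the vertical foliation on the open set $U$. Since $D$ is connected, the defining forms of the two foliations then agree on all of $D$ by analytic continuation; equivalently, the verticality test is passed at $Q$ and, $Y$ being connected, $\pi$ is vertical.

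The step I expect to be the main obstacle is the justification of the local product structure in the holomorphic setting: one must make sure Reeb stability applies to the nonsingular holomorphic foliation $\mathcal G$ and that the nearby leaves it produces are genuinely compact, since compactness is exactly what forces the map to $Y$ to be constant. Here the simple-connectivity of the projective fiber $F_0$, which trivializes the holonomy, is what makes the argument clean; it is precisely this compactness feature of monoidal blowing-ups that is being used. A purely computational alternative would translate the invariance of $F_0$ and the nonsingularity of $\mathcal G$ into vanishing of the leading coefficients $A^{\ell}_{r-1}$ appearing in Lemma~\ref{lema:dicriticalnotdicritical}, but propagating that vanishing from $F_0$ to a full neighborhood again needs an input of stability type, so I would favor the geometric route above.
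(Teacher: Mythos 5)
Your proof is correct, but it takes a genuinely different route from the paper's. The paper argues computationally, in the spirit of Lemma \ref{lema:dicriticalnotdicritical}: it expands the coefficients of a local generator in homogeneous parts along $Y$, forms the leading $1$-form $W=A^1_r\,dx+\sum_{j}A^j_{r-1}\,dy_j$, extracts a maximal common factor $\Phi$ of degree $t$, and works with the auxiliary foliation $\mathcal H$ defined by $\tilde W=W/\Phi$, which induces the same foliation $\mathcal G$ on $\pi^{-1}(Y)$. The invariance of the fiber $\pi^{-1}(Q)$ forces the $dx$-coefficient of $\tilde W$ to be not divisible by $x$, and a chart computation then shows that $r-t\geq 1$ would place a nonempty singular set of $\mathcal G$ inside that fiber, contradicting nonsingularity; hence $t=r$, the $dy_j$-coefficients of the leading form vanish identically, and $\mathcal G$ is $dx=0$. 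You replace all of this by foliation topology: the invariant fiber is a compact, simply connected leaf, so its holonomy is trivial; Reeb local stability (valid in arbitrary codimension, applied to the underlying real codimension-two foliation of the nonsingular holomorphic $\mathcal G$) yields a saturated neighborhood consisting of compact leaves; the maximum principle forces each such leaf to lie in, hence equal, a fiber; and the identity theorem, or equivalently the localization remark after Definition \ref{def:vertical}, globalizes the conclusion. Each approach buys something: yours is coordinate-free and isolates the conceptual mechanism (compactness and simple connectedness of the fibers of a monoidal blowing-up), while the paper's stays inside the elementary, self-contained toolkit of its Section 4 (generic orders and blow-up charts) and uses nonsingularity of $\mathcal G$ only along the single invariant fiber rather than on a neighborhood of it. One small correction to your closing remark: the computational alternative does not require ``an input of stability type'' --- the paper propagates the needed vanishing not by stability but by homogeneity of the leading form combined with the common-factor extraction, which is precisely the role of the auxiliary foliation $\mathcal H$.
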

\begin{proof}
It is enough to do the proof in the case when $M=(\mathbb C^n,\mathbf 0)$ and $E=\emptyset$, assuming that $Q=\mathbf 0$. Recall that we are in case {\bf Dic}.
	
Take again the notations developed along this subsection. Moreover, let us write
$$
a_1(x;\mathbf y)=\sum_{s\geq r} A^1_s(x;\mathbf y), \qquad a_j(x;\mathbf y)=\sum_{s\geq r-1} A^j_s(x;\mathbf y),
$$
for $j=2,3,\ldots,n$. Consider the 1-form given by
$$
W=A^1_r(x;\mathbf{y})dx+\sum_{j=2}^n A^j_{r-1}(x;\mathbf{y})dy_j.
$$
Let $\Phi$ be a maximal common factor of the coefficients of $W$ and let us write $W=\Phi\tilde W$. Note that $\Phi$ is homogeneous in the variables $\mathbf{y}$ of a certain degree $0\leq t\leq r$. Let us write
$$
\tilde W=\tilde A^1_{r-t}(x;\mathbf{y})dx+\sum_{j=2}^n \tilde A^j_{r-t-1}(x;\mathbf{y})dy_j.
$$
Let $\mathcal H$ be the foliation defined by $\tilde W=0$. The blowing-up $\pi$ is dicritical for $\mathcal H$ and the induced foliation of the transform $\mathcal H'$ of $\mathcal H$ over the exceptional divisor $\pi^{-1}(Y)$ coincides with $\mathcal G$, that is
$$
\mathcal H'|_{\pi^{-1}(Y)}=\mathcal G.
$$
The reader can verify that the fact that $x=0$ is invariant for $\mathcal G$ implies that $x=0$ is invariant for $\mathcal H$. In particular, the coefficient $\tilde A^1_{r-t}(x;\mathbf{y})$ is not divisible by $x$ and hence it is not identically zero.

Note that if $t=r$, the coefficients of $\tilde W$ for $dy_j$ are identically zero, for $j=2,3,\ldots,n$ and in this case the foliation $\mathcal G$ is given by $dx=0$, that is, the blowing-up $\pi$ is vertical.

Let us show that $t=r$. Assume the contrary, that is $r-t \geq 1$. Since $x$ does not divide $\tilde A^1(x;\mathbf{y})$, there is a monomial belonging to $\tilde A^1(x;\mathbf{y})$ not divisible by $x$. Hence, up to a change of chart in the blowing-up, we can assume that there is $1\leq k\leq r$ such that
\begin{eqnarray*}
\tilde A^1_r(0;1,y'_3,y'_4,\ldots,y'_n)
\end{eqnarray*}
is a polynomial of positive degree $t$ in the variables $y'_3,y'_4,\ldots, y'_n$. Recalling that $x$ divides $A^j_{r-t-1}(x;\mathbf{y})$, for $j=2,3,\ldots,n$, the singular locus of $\mathcal G=\mathcal H'|_{\pi^{-1}(Y)}$ contained in $x=0$ is given by in this chart by the non-empty set
\begin{equation*}
(\tilde A^1_r(0;1,y'_3,y'_4,\ldots,y'_n)=0) \cap(x=y'_2=0).
\end{equation*}
This is the desired contradiction.
\end{proof}
\section{Dimension Three. First Statements}
\label{Dimension Three}
Let $\mathcal M_0=((\mathbb C^3,\mathbf 0),\emptyset,\mathcal F_0)$ be an almost radial foliated space, where the origin is a singular point for $\mathcal F_0$. By definition, there is an adjusted resolution sequence
\begin{equation}
	\label{eq:desingularization sequence}
	\mathcal S:\quad\quad
\mathcal M_0\stackrel{\pi_1}{\leftarrow}
\mathcal M_1
\stackrel{\pi_2}{\leftarrow}
\cdots
\stackrel{\pi_N}{\leftarrow}
\mathcal M_N.
\end{equation}
In this Section \ref{Dimension Three}, we describe some
features of the  sequence $\mathcal S$.

Note that $N\geq 1$, since the origin is a singular point for $\mathcal F_0$.

Let us fix some notations. We put $\mathcal M_j=(M_j,E^j,\mathcal F_j)$ and we denote by $Y_{j-1}\subset M_{j-1}$ the center of $\pi_j$, for any $j=1,2,\ldots,N$. Let us also denote by $E^j_j$ the exceptional divisor $\pi_j^{-1}(Y_{j-1})$ and by $E^j_\ell$ the strict transform in $M_j$ of $E^\ell_\ell$, for $1\leq\ell< j$. In this way, we have that
$$
E^j=\cup_{\ell=1}^j E^j_\ell
$$
is the decomposition of $E^j$ into irreducible components.

Moreover, it is useful to consider the consecutive composition of blowing-ups in $\mathcal S$, so we denote
$$
\pi_j^{j+s}= \pi_{j+1}\circ \pi_{j+2}\circ \cdots \circ \pi_{j+s},
$$
for each $1 \leq s\leq  N-j$. We declare $\pi_j^j$ to be the identity in $\mathcal M_j$; let us note the double notation $\pi_j^{j+1}=\pi_{j+1}$.

\subsection{Equireduction Points}
\label{Equireduction Points}
We give here the notion of {\em $\mathcal S$-equireduction point}. This idea has been introduced in other similar contexts, the reader can see \cite{Can-Mat}.

We define the {\em essential sets $Z_j\subset M_j$} by inverse induction on the index $j=0,1,\ldots,N$ as follows. If $j=N$ we put $Z_N=\emptyset$. For $0\leq j\leq N-1$, we put
$
Z_j=Y_j\cup \pi_{j+1}(Z_{j+1})
$.
Let us note that
$$
Z_j=Y_j^j\cup Y_j^{j+1}\cup Y_j^{j+2}\cup \cdots \cup Y_j^{N-1},
$$
where $Y_j^j=Y_j$ and
$
Y_j^{j+s}=\pi_j^{j+s}(Y_{j+s})$, for $1\leq s<N-j
$.

The essential sets coincide with the singular locus as stated in next Proposition
\ref{prop:essentialsets}:
\begin{proposition}
\label{prop:essentialsets}
  We have that $Z_j=\operatorname{Sing}(\mathcal F_j)$, for any $j=0,1,\ldots,N$.
\end{proposition}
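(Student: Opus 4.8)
The plan is to prove the equality $Z_j = \operatorname{Sing}(\mathcal F_j)$ by descending induction on $j$, establishing the two inclusions separately. The base case $j = N$ is immediate: by definition $Z_N = \emptyset$, and since $\mathcal M_N$ has only simple \emph{regular} points (the sequence $\mathcal S$ is a resolution sequence), we have $\operatorname{Sing}(\mathcal F_N) = \emptyset$ as well.

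For the inductive step, suppose $Z_{j+1} = \operatorname{Sing}(\mathcal F_{j+1})$. Recall $Z_j = Y_j \cup \pi_{j+1}(Z_{j+1})$. I would first show the inclusion $Z_j \subseteq \operatorname{Sing}(\mathcal F_j)$. Here the two pieces of $Z_j$ are handled by the two ``indestructibility'' results proved earlier. The center $Y_j$ lies in $\operatorname{Sing}(\mathcal F_j)$ by the very remark recorded just after Definition \ref{def:almostradialfoliatedspace}: in an adjusted resolution sequence every center has codimension at least two, and Proposition \ref{prop:indestructible} forces $Y_j \subset \operatorname{Sing}(\mathcal F_j)$. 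For the second piece, take $Q \in Z_{j+1} = \operatorname{Sing}(\mathcal F_{j+1})$ and set $P = \pi_{j+1}(Q)$; I must argue that $P \in \operatorname{Sing}(\mathcal F_j)$. The natural tool is again Proposition \ref{prop:indestructible}, read in reverse: since $\mathcal F_{j+1}$ is singular at $Q \in \pi_{j+1}^{-1}(P)$, the foliation $\mathcal F_j$ cannot be non-singular at $P$, because a blowing-up centered at an invariant curve through a regular point of the foliation produces only a transverse exceptional divisor in a neighbourhood of the generic fibre; the appearance of a singularity upstairs signals that $P$ already carried one. I expect to phrase this via the local normal form, noting that if $P$ were a regular point of $\mathcal F_j$ then $\mathcal F_j$ would be given by $dx_1 = 0$ in suitable coordinates and $\pi_{j+1}^*(dx_1) = d(x_1\circ\pi_{j+1})$ would stay regular over $\pi_{j+1}^{-1}(P)$.

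The reverse inclusion $\operatorname{Sing}(\mathcal F_j) \subseteq Z_j$ is where the real content lies, and I anticipate this being the main obstacle. Take $P \in \operatorname{Sing}(\mathcal F_j)$; I want to show either $P \in Y_j$ or $P \in \pi_{j+1}(\operatorname{Sing}(\mathcal F_{j+1}))$. The dichotomy is according to whether $P$ lies on the center $Y_j$ or not. If $P \notin Y_j$, then $\pi_{j+1}$ is a local isomorphism near $P$, so the singularity is carried faithfully to a singularity of $\mathcal F_{j+1}$ over $P$, placing $P$ in $\pi_{j+1}(\operatorname{Sing}(\mathcal F_{j+1})) = \pi_{j+1}(Z_{j+1})$. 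The delicate scenario is $P \in Y_j$ but $P$ possibly \emph{not} captured by $Z_{j+1}$: here I must rule out that $P$ could be a singular point of $\mathcal F_j$ lying on $Y_j$ yet with $\pi_{j+1}^{-1}(P) \cap \operatorname{Sing}(\mathcal F_{j+1}) = \emptyset$ and $P \notin Y_j$ simultaneously—but since $P \in Y_j$ automatically gives $P \in Z_j$ by the definition $Z_j = Y_j \cup \pi_{j+1}(Z_{j+1})$, this case is actually free. Thus the only genuine work is the case $P \notin Y_j$, handled by the local isomorphism argument above, and the proof closes by combining both inclusions with the inductive hypothesis.
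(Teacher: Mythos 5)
Your overall induction scheme (base case $j=N$, the easy inclusion $\operatorname{Sing}(\mathcal F_j)\subseteq Z_j$ via the local-isomorphism dichotomy, and the appeal to the last bullet of Subsection \ref{Radial Foliated Spaces} to get $Y_j\subseteq\operatorname{Sing}(\mathcal F_j)$ for the adjusted sequence) is sound. The genuine error is in your treatment of the remaining piece $\pi_{j+1}(Z_{j+1})\subseteq\operatorname{Sing}(\mathcal F_j)$. You claim that if $P$ were a regular point of $\mathcal F_j$, then $\pi_{j+1}^*(dx_1)=d(x_1\circ\pi_{j+1})$ ``would stay regular over $\pi_{j+1}^{-1}(P)$'', so that a singularity upstairs signals a singularity downstairs. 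This is false when $P$ lies on the center, and in fact it asserts the exact opposite of the second part of Proposition \ref{prop:indestructible}, the very result you invoke as your tool: blowing up an invariant center of codimension at least two through a point where $\mathcal F_j$ has a holomorphic first integral (in particular through any \emph{regular} point, with $f=x_1$) always produces at least one singular point in the fiber over that point. Concretely, for $\mathcal F=(dx_1=0)$ on $(\mathbb C^3,\mathbf 0)$ and the invariant center $Y=(x_1=x_2=0)$, in the chart $x_1=uv$, $x_2=v$, $x_3=w$ the transformed foliation is generated by $v\,du+u\,dv$, which is singular along $u=v=0$; the exceptional divisor is invariant, not transverse, and every fiber over $Y$ contains a singular point. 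So ``the appearance of a singularity upstairs signals that $P$ already carried one'' is precisely the wrong reading of Proposition \ref{prop:indestructible}.

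The inclusion you want is nevertheless true, and the repair costs one line using what you already proved: given $Q\in Z_{j+1}=\operatorname{Sing}(\mathcal F_{j+1})$ with $P=\pi_{j+1}(Q)$, either $Q\in\pi_{j+1}^{-1}(Y_j)$, in which case $P\in Y_j\subseteq\operatorname{Sing}(\mathcal F_j)$ by your step for the first piece, or $Q\notin\pi_{j+1}^{-1}(Y_j)$, in which case $\pi_{j+1}$ is an isomorphism of foliated spaces near $Q$ and $P$ is singular; no statement about regular points staying regular is needed. For comparison, the paper argues without induction: it gets $\operatorname{Sing}(\mathcal F_j)\subseteq Z_j$ by noting that a point of $M_j\setminus Z_j$ has a neighbourhood on which $\pi_j^N$ is an isomorphism onto an open subset of the non-singular $M_N$, and it gets $Z_j\subseteq\operatorname{Sing}(\mathcal F_j)$ by taking the first center $Y_k$, $k\geq j$, passing through a hypothetical regular point of $Z_j$ and letting Proposition \ref{prop:indestructible} generate indestructible singularities that survive to $\mathcal M_N$ --- that is, the paper's proof is built exactly on the phenomenon your step denies.
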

\begin{proof}
Let us first prove that $\operatorname{Sing}(\mathcal F_j)\subset Z_j$. Given a point $Q\in M_{j}\setminus Z_j$, there is an open neighbourhood $V$ of $Q$ such that $\mathcal M_j\vert_{V}$ is isomorphic to the restriction of $\mathcal M_N$ to an open set of $M_N$, via the composition $\pi_j^N$. Since we know that $\operatorname{Sing}(\mathcal F_N)=\emptyset$, we see that $Q$ cannot be a singular point of $\mathcal F_j$.

Conversely, if there is a point $P\in Z_j\setminus  \operatorname{Sing}(\mathcal F_j)$, let $k\geq j$ be the first index such that $P\in Y_{k}$ (up to the local isomorphisms of the previous blowing-ups around $P$). Then $\pi_{k+1}$ is a blowing-up with invariant center not contained in the singular locus. This generates indestructible singularities as stated in Proposition \ref{prop:indestructible}.
\end{proof}
\begin{definition}
\label{def:equireduction}
We say that a point $P\in Z_j$ is an {\em $\mathcal S$-equireduction point} if
for any index $0\leq s < N-j$ and any $P'\in Z_{j+s}$ such that $\pi_j^{j+s}(P')=P$ we have the following properties:
\begin{enumerate}
    \item The germ $(Z_{j+s},P')$ is a non-singular analytic curve of $(M_{j+s},P')$ having full normal crossings with $E^{j+s}$.
    \item
   There is an isomorphism $(Z_{j+s},P')\rightarrow (Z_j,P)$ induced by $\pi_j^{j+s}$.
\end{enumerate}
\end{definition}
\begin{remark}
The complement in $Z_j$ of the set of $\mathcal S$-equireduction points is a finite set.
\end{remark}
\begin{remark} Let $P\in Z_j$ be an $\mathcal S$-equireduction point and consider a point $P'\in Z_{j+s}$ such that $\pi_{j}^{j+s}(P')=P$. Then $P'\in Z_{j+s}$ is also an $\mathcal S$-equireduction point.
\end{remark}
\begin{proposition} \label{prop:equireduction}
Let $P\in Z_j$ be an $\mathcal S$-equireduction point. We have the equality of germs
$$
(Z_j,P)=(Y_j^{j+s},P),
$$ for any integer number $s$ with $0\leq s< N-j$ such that $P\in Y_j^{j+s}$.
\end{proposition}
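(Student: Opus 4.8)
The plan is to fix an index $s$ with $0\le s<N-j$ and $P\in Y_j^{j+s}$ and to prove only the inclusion $(Z_j,P)\subseteq (Y_j^{j+s},P)$, since the reverse one is automatic from $Y_j^{j+s}\subseteq Z_j$. Because $P\in Y_j^{j+s}=\pi_j^{j+s}(Y_{j+s})$, I would choose $P'\in Y_{j+s}$ with $\pi_j^{j+s}(P')=P$. As the sequence is adjusted, $Y_{j+s}\subseteq\operatorname{Sing}(\mathcal F_{j+s})=Z_{j+s}$ by Proposition \ref{prop:essentialsets}, so $P'\in Z_{j+s}$ and the equireduction hypothesis at $P$ applies to the preimage $P'$ at index $s$: by part (1) of Definition \ref{def:equireduction} the germ $(Z_{j+s},P')$ is a nonsingular curve, and by part (2) the map $\pi_j^{j+s}$ induces an isomorphism $(Z_{j+s},P')\xrightarrow{\sim}(Z_j,P)$. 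If we knew that $(Y_{j+s},P')=(Z_{j+s},P')$, the proof would end at once: applying $\pi_j^{j+s}$ gives $(Z_j,P)=\pi_j^{j+s}\big((Z_{j+s},P')\big)=\pi_j^{j+s}\big((Y_{j+s},P')\big)\subseteq (Y_j^{j+s},P)$.

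The whole content thus lies in showing $(Y_{j+s},P')=(Z_{j+s},P')$. The center $Y_{j+s}$ is irreducible and, the sequence being adjusted, has codimension at least two in the threefold $M_{j+s}$; hence it is either a nonsingular curve or a single point. Since $(Y_{j+s},P')$ is a nonempty closed subgerm of the smooth, hence irreducible, curve germ $(Z_{j+s},P')$, it coincides with that germ as soon as it is one-dimensional. So it suffices to exclude the possibility that $Y_{j+s}=\{P'\}$ is a point, and this is the main obstacle.

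I would rule this out by contradiction. Assume $Y_{j+s}=\{P'\}$, so that $\pi_{j+s+1}$ is the blowing-up of the point $P'$, with exceptional divisor $\mathcal E=E^{j+s+1}_{j+s+1}\cong\mathbb P^2_{\mathbb C}$. The singular locus $Z_{j+s}=\operatorname{Sing}(\mathcal F_{j+s})$ is the smooth curve $(Z_{j+s},P')$ through $P'$; since $\pi_{j+s+1}$ is an isomorphism off $P'$ and the singular locus is closed, the strict transform $\widetilde C$ of this curve is contained in $Z_{j+s+1}=\operatorname{Sing}(\mathcal F_{j+s+1})$. But $\widetilde C$ maps isomorphically onto the curve and meets $\mathcal E$ at one point $P''$ transversally, so $\widetilde C\not\subseteq\mathcal E$ while $\widetilde C\cap\mathcal E=\{P''\}\ne\emptyset$. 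If $s+1<N-j$, then $P''\in Z_{j+s+1}$ lies over $P$, so part (1) of Definition \ref{def:equireduction} forces $(Z_{j+s+1},P'')$ to have full normal crossings with $E^{j+s+1}$, and in particular to be contained in the component $\mathcal E$ it meets; as $\widetilde C\subseteq Z_{j+s+1}$ this yields $\widetilde C\subseteq\mathcal E$, a contradiction. In the boundary case $s+1=N-j$ the contradiction is immediate: $Z_{j+s+1}=Z_N=\operatorname{Sing}(\mathcal F_N)=\emptyset$, which is incompatible with $\emptyset\ne\widetilde C\subseteq Z_{j+s+1}$. Either way $Y_{j+s}$ cannot be a point, so it is a curve and the previous paragraph applies.

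The facts I would invoke without detailed verification are standard: the strict transform of a smooth curve through the center of a point blowing-up maps isomorphically onto it and is transverse to the exceptional $\mathbb P^2$; the singular locus is unchanged away from the blown-up center and is closed, so it contains the whole strict transform $\widetilde C$; and a one-dimensional closed subgerm of a smooth curve germ is the entire germ. The conceptual key is that it is precisely the \emph{full} normal crossings clause of the equireduction definition---demanding that the singular curve be contained in each divisor component it meets---that is violated by the transverse exceptional $\mathbb P^2$ produced by a point blowing-up; this is what makes point centers incompatible with equireduction.
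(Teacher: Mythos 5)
Your proposal is correct, and its engine is the same as the paper's: a point center is excluded by taking the strict transform of the smooth singular curve under the point blowing-up and observing that it violates either the full normal crossings clause of Definition \ref{def:equireduction} (when $s+1<N-j$) or the emptiness of $\operatorname{Sing}(\mathcal F_N)$ (when $j+s+1=N$); this is precisely the paper's argument for the case $s=0$. Where you genuinely differ is in the global organization. The paper first proves the case $s=0$ at every level, and then treats $1\le s<N-j$ by inverse induction on $j$, with a case distinction between $P\notin Y_j$ (where $\pi_{j+1}$ is a local isomorphism over $P$ and one descends one level) and $P\in Y_j$ (where the $s=0$ statement is invoked at level $j+s$ --- legitimate because points over an equireduction point are again equireduction points --- and one pushes down by the germ isomorphism in item (2) of the definition). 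You instead work uniformly at level $j+s$: you choose $P'\in Y_{j+s}$ over $P$, prove $(Y_{j+s},P')=(Z_{j+s},P')$ in place, deriving it directly from the equireduction conditions of $P$ itself at indices $s$ and $s+1$ rather than from those of $P'$, and then descend in a single step via the isomorphism $(Z_{j+s},P')\to(Z_j,P)$. This eliminates both the induction on $j$ and the case split, at the modest cost of having to check separately the boundary case $s+1=N-j$, which you do. Both arguments rest on the same two pillars --- the point-center exclusion and the equireduction isomorphism --- so the difference is one of economy and structure rather than of ideas; your version is shorter and arguably cleaner, while the paper's version isolates the $s=0$ case as a reusable statement and proceeds level by level.
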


\begin{proof} Let us consider first the case $s=0$. Hence we assume that $P\in Y_j$ and we are going to show that $(Z_j,P)=(Y_j,P)$.
We know that $(Z_j,P)$ is an non-singular irreducible curve and that $(Y_j,P)\subset (Z_j,P)$. Then, it is enough to show that the germ $(Y_j,P)$ is a germ of curve.  Let us reason by contradiction. If $(Y_j,P)$ is not a germ of curve, then we have that $Y_j=\{P\}$. That is, the morphism $\pi_{j+1}$ is the quadratic blowing-up with center $P$.  Denote by $\Gamma=(Z_j,P)$ the non-singular germ of curve of $Z_j$ at $P$. Let $\Gamma'$ be the strict transform of $\Gamma$ by $\pi_{j+1}$ and denote $P'=\Gamma'\cap \pi_{j+1}^{-1}(P)$. We know that $\Gamma'\subset Z_{j+1}=\operatorname{Sing}(\mathcal F_{j+1})$ and, in particular, we have that $P'\in Z_{j+1}$. The point $P'$ is a $\mathcal S$-equireduction point and then $(Z_{j+1},P')$ is a non-singular irreducible germ of curve. ``A fortiori'', we have the equality of germs
$$
(\Gamma',P')=(Z_{j+1},P').
$$
The contradiction arrives noting that $\Gamma'$, and hence $(Z_{j+1},P')$, is transverse to the exceptional divisor $\pi_{j+1}^{-1}(P)$ and thus the condition of full normal crossings fails.	
	
Let us prove the statement for an integer $s$ such that $1\leq s<N-j$ by inverse induction on the index $j$. If $j=N$, there is nothing to say, since $Z_N=\emptyset$. In the case $j=N-1$ the result is also true, since we have that $Z_{N-1}=Y_{N-1}$. Assume that the result is true for $j'>j$. We have two cases $P\notin Y_j$ or $P\in Y_j$.
	
Assume first that $P\notin Y_j=Y_j^{j}$. Recalling that the blowing-up morphism $\pi_{j+1}$ is centered at $Y_j$, we see that $\pi_{j+1}$ defines  a local isomorphism over the point $P$. Moreover, we have that $P'\in Y_{j+1}^{j+s}$, where $\pi_{j+1}^{-1}(P)=\{P'\}$. By induction hypothesis applied to $j'=j+1$ we have that $(Z_{j+1}, P')=(Y_{j+1}^{j+s},P')$. We conclude since we have that
	$$
	\pi_{j+1}((Z_{j+1}, P'))=(Z_{j}, P), \quad \pi_{j+1}((Y_{j+1}^{j+s},P'))=(Y_{j}^{j+s},P).
	$$
	
Assume that $P\in Y_j$. We know that $(Z_j,P)$ is an non-singular irreducible curve. Since $P\in Y_j^{j+s}$, we have that $(Y_j^{j+s},P)\subset (Z_j,P)$. Then, it is enough to show that the germ $(Y_j^{j+s},P)$ is a germ of curve. Take a point $P'\in Y_{j+s}$ that projects over $P$, that is $\pi_j^{j+s}(P')=P$. Since
	$$
	\pi_j^{j+s}((Y_{j+s},P')) \subset (Y_j^{j+s},P),
	$$
	it is enough to show that $\pi_j^{j+s}((Y_{j+s},P'))$ is a curve. By induction hypothesis, we have that $(Y_{j+s},P')=(Z_{j+s},P')$. Since $P$ is an $\mathcal S$-equireduction point, we know that $(Z_{j+s},P')$ is a germ of curve isomorphic to $(Z_{j},P)$  via $\pi_j^{j+s}$. We conclude that $\pi_j^{j+s}((Y_{j+s},P'))$ coincides with $(Z_j,P)$ and then it is a germ of curve, as desired.
\end{proof}
\begin{remark}
As a consequence of this Proposition  \ref{prop:equireduction}, we have that $(Y_{j}^{j+s},P)$ is a germ of non-singular irreducible curve, when $P\in Y_{j}^{j+s}$ and it is an $\mathcal S$-equireduction point.
\end{remark}

\subsection{Index-Persistent Equireduction Points} 
We introduce here a kind of equireduction points that are persistent under blowing-ups.

\begin{lema}
	\label{lema:indexpoints}
Consider an $\mathcal S$-equireduction point $P\in Z_j=\operatorname{Sing}(\mathcal F_j)$ and assume that $P$ belongs to a non-dicritical component $D$ of $E^j$. Let $(\Delta,P)\subset (M,P)$ be germ of non-singular surface transverse to $Z_j$. Then, we have that $\Delta$ is transverse to $\mathcal F_j$ at $P$ and thus we have a well-defined restriction $\mathcal G=\mathcal F_j \vert_\Delta$. Moreover, the curve $\Delta\cap D$ is invariant for $\mathcal G$.
\end{lema}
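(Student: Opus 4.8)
The plan is to normalize coordinates and reduce everything to a single divisibility observation. First I would use that $P$ is an $\mathcal S$-equireduction point: taking $s=0$ in Definition~\ref{def:equireduction} gives that $(Z_j,P)$ is a germ of non-singular curve having full normal crossings with $E^j$, and by Proposition~\ref{prop:essentialsets} it coincides with $(\operatorname{Sing}(\mathcal F_j),P)$. Since $P\in Z_j\cap D$, full normal crossings forces $Z_j\subset D$, so I can choose coordinates $(x,y,z)$ at $P$ with $D=(y=0)$ and $Z_j=(y=z=0)$. Writing a local generator $\omega=A\,dx+B\,dy+C\,dz$ of $\mathcal F_j$ with $A,B,C$ without common factor, the hypothesis that $D$ is non-dicritical means $D$ is invariant, that is $i_D^*\omega=0$; restricting to $(y=0)$ yields $A(x,0,z)=C(x,0,z)=0$, so $y\mid A$ and $y\mid C$. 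Writing $A=yA_1$, $C=yC_1$ we get
\[
\omega=yA_1\,dx+B\,dy+yC_1\,dz .
\]
The crucial observation is that $y\nmid B$: otherwise $y$ would divide all three coefficients, contradicting the absence of a common factor.

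Next I would prove the transversality assertion. Since $\Delta$ is transverse to the $x$-axis $Z_j=(y=z=0)$, it is a graph $x=g(y,z)$, with $(y,z)$ serving as coordinates on $\Delta$ via the projection. Substituting $dx=g_y\,dy+g_z\,dz$, the restriction becomes
\[
i^*\omega=\bigl(B+yA_1g_y\bigr)\,dy+y\bigl(C_1+A_1g_z\bigr)\,dz .
\]
Because $y\nmid B$, the coefficient $B+yA_1g_y$ is not divisible by $y$, so in particular $i^*\omega\not\equiv 0$. Therefore $\Delta$ is not invariant for $\mathcal F_j$, which is precisely what transversality of $\Delta$ to $\mathcal F_j$ at $P$ means here, and the restriction $\mathcal G=\mathcal F_j|_\Delta$ is a well-defined singular foliation on the surface $\Delta$.

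Then I would check invariance of $\Gamma=\Delta\cap D$, which is the curve $(y=0)$ inside $\Delta$. Pulling $i^*\omega$ back to $\Gamma$ annihilates the $dy$-term and evaluates the $dz$-coefficient $y(C_1+A_1g_z)$ at $y=0$, so the pullback vanishes identically. To conclude for the actual generator $\eta=i^*\omega/h$ of $\mathcal G$, where $h$ is the common factor of the two coefficients above, I would note that $h\mid\bigl(B+yA_1g_y\bigr)$ together with $y\nmid\bigl(B+yA_1g_y\bigr)$ forces $y\nmid h$, so $h$ is not identically zero along $\Gamma=(y=0)$; since $\mathcal O_\Gamma$ is a domain and the pullback of $h\,\eta$ to $\Gamma$ is zero, the pullback of $\eta$ to $\Gamma$ vanishes as well, i.e. $\Gamma$ is invariant for $\mathcal G$. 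Conceptually this is unavoidable: $\Gamma\subset D$ and $D$ is a union of $\mathcal F_j$-leaves, so every $\mathcal G$-leaf meeting $\Gamma$ stays inside $D\cap\Delta=\Gamma$.

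The argument has no deep step: the entire content is the coordinate normalization afforded by the equireduction and full-normal-crossings hypotheses, together with the divisibility $y\nmid B$ read off from the no-common-factor condition. The one place requiring a little care—and the only thing I would call an obstacle—is the passage in the second assertion from $i^*\omega$ to the reduced generator $\eta$, where one must control the common factor $h$ so that invariance of $\Gamma$ for $\mathcal G$ follows from the vanishing of the pullback of $i^*\omega$; this is exactly where $y\nmid B$ is invoked a second time, the rest being a direct computation.
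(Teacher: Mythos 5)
Your coordinate normalization and the computation of $i^*\omega$ are fine, but the pivotal divisibility step is not justified as stated, and everything else hinges on it. You claim: ``because $y\nmid B$ (no common factor in $\mathcal O_{\mathbb C^3,P}$), the coefficient $B+yA_1g_y$ is not divisible by $y$''. After substituting $x=g(y,z)$, that coefficient is a germ in the two variables $(y,z)$, and its divisibility by $y$ amounts to the vanishing of $B(g(0,z),0,z)$. The hypothesis $y\nmid B$ only says that $B(x,0,z)\not\equiv 0$ as a germ in $(x,z)$, and this does \emph{not} survive restriction to the curve $x=g(0,z)$ inside $D$: for instance $B=x$ (or $B=x-z^2$) is not divisible by $y$, yet restricts to $0$ on the surface $\Delta=(x=0)$ (resp.\ $\Delta=(x=z^2)$), which is transverse to $Z_j=(y=z=0)$. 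Since both the transversality assertion ($i^*\omega\not\equiv0$) and the control of the common factor $h$ in your last paragraph rest on this one claim, the proof as written has a genuine gap.

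The claim is in fact true, but for a reason you never invoke. At an $\mathcal S$-equireduction point the singular locus coincides, as a germ, with the non-singular curve $Z_j=(y=z=0)$ (Proposition \ref{prop:essentialsets} together with Definition \ref{def:equireduction}). Since $y\mid A$ and $y\mid C$, the set $\{y=B=0\}$ is contained in $\operatorname{Sing}(\mathcal F_j)=(y=z=0)$; hence the germ $B(x,0,z)$ vanishes only on $z=0$, so $B(x,0,z)=z^k u(x,z)$ with $u$ a unit (and $k\geq 1$ because $P$ is singular). Then $B(g(0,z),0,z)=z^k\,u(g(0,z),z)\not\equiv 0$, which is exactly the non-divisibility you need; in the pathological examples above the singular locus would contain a second curve through $P$, which equireduction forbids. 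Note also that the paper avoids the computation entirely with a softer argument: if $\Delta$ were invariant, then $\Delta\cap D$, being the intersection of two distinct invariant surfaces, would be a curve contained in $\operatorname{Sing}(\mathcal F_j)=Z_j$, contradicting the transversality of $\Delta$ to $Z_j$; and invariance of $\Delta\cap D$ for $\mathcal G$ is checked at any point of $\Delta\cap D\setminus Z_j$, where $D$ is locally a regular leaf --- essentially the observation you relegate to a closing remark.
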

\begin{proof}
If $\Delta$ is invariant for $\mathcal F_j$, then the curve $\Delta \cap D$ is contained in the singular locus $\operatorname{Sing}(\mathcal F_j)=Z_j$. This is a contradiction with the transversality condition. To see that $\Delta \cap D$ is invariant for $\mathcal G$ it is enough to look at point $Q\in \Delta \cap D\setminus Z_j$.
\end{proof}
 An $\mathcal S$-equireduction point $P\in Z_j$ 
 is an {\em index-persistent equireduction point for $\mathcal M_j$} if $P$ belongs to a non-dicritical component $D$ of $E^j$ and there is a germ of  non-singular  surface $(\Delta,P)\subset (M,P)$ such that:
\begin{enumerate}
\item $\Delta$ is transverse to $Z_j$. Hence, by Lemma \ref{lema:indexpoints}, the restricted foliation  $\mathcal G=\mathcal F_j \vert_\Delta$ exists and the curve $\Delta\cap D$ is invariant for $\mathcal G$.
\item  The Camacho-Sad
index of $\mathcal G$ with respect to $\Delta\cap D$ at the point $P$ has strictly negative real part.
\end{enumerate}
\begin{remark}
We ask $P$ to be and index-persistent point for $\mathcal G$, but additionally we fix the invariant curve $\Delta \cap D$ to get a Camacho-Sad index with negative real part.
\end{remark}

\begin{proposition}
\label{prop:stability of index persistent}
Assume that there is an index-persistent equireduction point for $\mathcal M_j$. Then, there is also an  index-persistent equireduction point for $\mathcal M_{j+1}$.
\end{proposition}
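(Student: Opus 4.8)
The plan is to reduce the three-dimensional statement to the two-dimensional Proposition \ref{prop:camachosad} by cutting with the transverse surface furnished by the index-persistence hypothesis. Let $P\in Z_j$ be an index-persistent equireduction point for $\mathcal M_j$, lying on a non-dicritical component $D$ of $E^j$, and let $(\Delta,P)$ be the witnessing transverse surface, so that $\mathcal G=\mathcal F_j\vert_\Delta$ has $\gamma=\Delta\cap D$ invariant with $\operatorname{CS}(\mathcal G,\gamma,P)$ of strictly negative real part. I first dispose of the case $P\notin Y_j$: there $\pi_{j+1}$ restricts to a local isomorphism over $P$, so the unique point $\tilde P$ over $P$ inherits all the data. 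It lies in $Z_{j+1}$ since $P\in Z_j=\operatorname{Sing}(\mathcal F_j)$, it is an equireduction point by the stability remark following Definition \ref{def:equireduction}, and the transform of $\Delta$ witnesses the same Camacho--Sad index; hence we are done.

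So assume $P\in Y_j$. Since $P$ is an equireduction point, Proposition \ref{prop:equireduction} (with $s=0$) gives $(Z_j,P)=(Y_j,P)$; in particular $Y_j$ is a smooth curve through $P$, contained in $D$ by the full normal crossings condition. The surface $\Delta$ meets $Y_j$ transversally at the single point $P$. The heart of the argument is the following compatibility: the strict transform $\Delta'$ of $\Delta$ maps, via $\pi_{j+1}\vert_{\Delta'}$, isomorphically onto the quadratic blow-up of $\Delta$ at $P$, and under this identification $\mathcal F_{j+1}\vert_{\Delta'}$ equals the blow-up transform $\mathcal G'$ of $\mathcal G$. In adapted coordinates $(x,y,z)$ with $Y_j=(y=z=0)$, $D=(y=0)$ and $\Delta=(x=0)$ this is a direct local verification in the two charts of $\pi_{j+1}$. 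Granting this, Remark \ref{remark:index-persistent} applies to the point blow-up of $\Delta$ at $P$: the strict transform $\gamma'$ of $\gamma$ meets the exceptional divisor at a point $P'$ with $\operatorname{CS}(\mathcal G',\gamma',P')=\operatorname{CS}(\mathcal G,\gamma,P)-1$, again of strictly negative real part. Let $\tilde P\in M_{j+1}$ be the point corresponding to $P'$ under $\Delta'\subset M_{j+1}$.

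It remains to check that $\tilde P$ is an index-persistent equireduction point for $\mathcal M_{j+1}$, and the main obstacle is to show $\tilde P\in Z_{j+1}=\operatorname{Sing}(\mathcal F_{j+1})$. Here I use that $\pi_{j+1}$ is non-dicritical: dicriticalness of the monoidal blow-up along $Y_j$ can be tested on the transverse slice $\Delta$, as in the localization discussion around Lemma \ref{lema:dicriticalnotdicritical}, and an index-persistent two-dimensional singularity, having a Camacho--Sad index of strictly negative real part along $\gamma$, cannot be dicritical, since a dicritical (radial-type) point forces a positive Camacho--Sad index along every smooth invariant branch. Hence the exceptional divisor $E^{j+1}_{j+1}$ is invariant. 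Since $\tilde P$ lies on the strict transform $D'$ of $D$ (because $\gamma'\subset D'$) and on $E^{j+1}_{j+1}$ (because $P'=\gamma'\cap e$ with $e=\Delta'\cap E^{j+1}_{j+1}$), it is a corner of the two non-dicritical components $D'$ and $E^{j+1}_{j+1}$; along the intersection curve of two invariant divisors the generating $1$-form vanishes, so this edge lies in $\operatorname{Sing}(\mathcal F_{j+1})$ and $\tilde P\in Z_{j+1}$ (compare Remark \ref{rk:nodicritical corners}).

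Finally, with $\tilde P\in Z_{j+1}$ settled, the remaining conditions follow. By the stability remark following Definition \ref{def:equireduction}, $\tilde P$ is an $\mathcal S$-equireduction point; it lies on the non-dicritical component $D'$; the surface $\Delta'$ is transverse to $Z_{j+1}=D'\cap E^{j+1}_{j+1}$ at $\tilde P$, which is immediate in the local coordinates above; and by construction $\mathcal F_{j+1}\vert_{\Delta'}=\mathcal G'$ has $\Delta'\cap D'=\gamma'$ invariant, with Camacho--Sad index of strictly negative real part at $\tilde P$. Thus $\tilde P$ is the index-persistent equireduction point required for $\mathcal M_{j+1}$. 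The two delicate points to write out with care are the restriction--blow-up compatibility of the second paragraph and the non-dicriticalness claim of the third; everything else is either local computation or a citation of the stated results.
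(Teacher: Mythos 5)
Your first two paragraphs run parallel to the paper's proof (reduce to the point blow-up of the slice $\Delta$ at $P$ and apply Remark \ref{remark:index-persistent}), but the third paragraph has a genuine gap. To get $\tilde P\in Z_{j+1}$ you claim that $\pi_{j+1}$ is non-dicritical, on the grounds that a two-dimensional dicritical singularity ``forces a positive Camacho--Sad index along every smooth invariant branch''. That statement is false. Consider on $(\mathbb C^2,\mathbf 0)$ the $1$-form
$$
\eta=w(w+u^2)\,du+u(u^2-w)\,dw .
$$
It has an isolated singularity at the origin, the line $w=0$ is invariant, and the Camacho--Sad index along it is $\operatorname{Res}_{u=0}\bigl(-u^2/(u\cdot u^2)\bigr)=-1$, of strictly negative real part; yet the blow-up of the origin is dicritical: in the chart $w=uw'$ one finds $\pi^*\eta=u^3\bigl(2w'\,du+(u-w')\,dw'\bigr)$, whose restriction to the exceptional line $u=0$ is $-w'\,dw'\not\equiv 0$. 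Taking the product of this example with a line and blowing up the singular axis shows, moreover, that a monoidal blow-up can perfectly well be dicritical in the presence of an index-persistent point, so non-dicriticalness not only is unproved by your argument, it simply does not follow from the hypotheses. A second weak point in the same paragraph is the assertion that dicriticalness of the monoidal blow-up ``can be tested on the transverse slice'': dicriticalness is a generic property along the center and is not detected by the slice through one particular (possibly special) point; this is exactly the phenomenon behind the discussion of vertical blow-ups in Subsection \ref{Vertical Blowing-ups}.

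The detour through dicriticalness is unnecessary, and this is what the paper's proof exploits. The Camacho--Sad index of $\mathcal G'$ at $\tilde P$ along $\gamma'=\Delta'\cap D'$ has strictly negative real part, in particular it is nonzero. If $\tilde P$ were not in $Z_{j+1}=\operatorname{Sing}(\mathcal F_{j+1})$, then, since $D'$ is invariant, the local leaf through $\tilde P$ would be $D'$ itself and the restriction $\mathcal G'=\mathcal F_{j+1}\vert_{\Delta'}$ would be regular at $\tilde P$, forcing the Camacho--Sad index along $\gamma'$ to vanish --- a contradiction. Hence $\tilde P\in Z_{j+1}$ directly, regardless of whether $\pi_{j+1}$ is dicritical, and the rest of your final paragraph (equireduction by the stability remark, transversality of $\Delta'$, invariance of $\gamma'$) then goes through. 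Replacing your third paragraph by this one observation repairs the proof and makes it essentially identical to the paper's.
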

\begin{proof}
Without lost of generality we can assume that $P\in Y_j$. Let $P'$ be the point $P'=\pi_{j+1}^{-1}(P) \cap D'$, where $D'$ is the strict transform of $D$ by $\pi_{j+1}$. We know that Camacho-Sad index of $\mathcal G'$ at $P'$ with respect to $\Delta'\cap D'$ has strictly negative real part, see Remark \ref{remark:index-persistent}. Now, it is enough to see that $P'$ is a singular point $P'\in Z_{j+1}$, hence, an equireduction point and thus, an index-persistent equireduction point. If $P'\notin Z_{j+1}$, we would have that $\mathcal G'$ is regular at $P'$ and then, the Camacho-Sad index should be zero.
\end{proof}
 \begin{corollary}
 	\label{cor:noindexpersistent}
 	There are no index-persistent equireduction points for $\mathcal M_j$, for any $j=0,1,\ldots,N$.
\end{corollary}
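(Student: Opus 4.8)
The plan is to derive the statement directly from the stability result in Proposition~\ref{prop:stability of index persistent}, combined with the fact that the resolution sequence terminates in a foliated space with empty singular locus. The key preliminary observation is that every index-persistent equireduction point for $\mathcal M_j$ is, by its very definition, a point of $Z_j=\operatorname{Sing}(\mathcal F_j)$: it is required to be an $\mathcal S$-equireduction point of $Z_j$ and to carry a Camacho--Sad index with strictly negative real part with respect to the invariant curve $\Delta\cap D$, and the latter condition already forces the point to be singular for the restricted foliation $\mathcal G$, hence for $\mathcal F_j$. Thus the nonexistence we want to prove is tied to the vanishing of $Z_j$ at the end of the sequence.

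First I would settle the top index $j=N$. Since $\mathcal S$ is a resolution sequence, all points of $M_N$ are simple regular points and in particular $\operatorname{Sing}(\mathcal F_N)=Z_N=\emptyset$. As there are no singular points at all in $\mathcal M_N$, there can be no index-persistent equireduction point for $\mathcal M_N$; this establishes the base case.

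Next I would run a downward argument by contradiction. Suppose that for some index $j$ with $0\le j<N$ there exists an index-persistent equireduction point for $\mathcal M_j$. Applying Proposition~\ref{prop:stability of index persistent} to the blowing-up $\pi_{j+1}$ produces an index-persistent equireduction point for $\mathcal M_{j+1}$. Iterating this a finite number of times along the blowing-ups $\pi_{j+1},\pi_{j+2},\ldots,\pi_{N}$, I obtain an index-persistent equireduction point for $\mathcal M_N$, contradicting the base case just proved. Hence no such point can exist for any $\mathcal M_j$, which is exactly the assertion of the Corollary.

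I do not expect any serious obstacle here: all the analytic content---the transfer of the negative-real-part Camacho--Sad index across a single blowing-up and the verification that the transformed point remains singular and equireduction---has already been absorbed into Proposition~\ref{prop:stability of index persistent}. The only point that needs a clean statement is that the propagation is along a \emph{finite} chain of blowing-ups, so that the induction reaches the fully resolved model $\mathcal M_N$ in finitely many steps; this is immediate since $N$ is finite.
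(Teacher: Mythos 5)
Your proposal is correct and follows exactly the paper's own argument: the base case $\operatorname{Sing}(\mathcal F_N)=\emptyset$ rules out index-persistent equireduction points in $\mathcal M_N$, and forward propagation via Proposition~\ref{prop:stability of index persistent} along the finite sequence of blowing-ups yields the contradiction. The paper states this more tersely, but the content is identical.
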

\begin{proof}
	Since $\operatorname{Sing}(\mathcal F_N)=\emptyset$, there are no index-persistent equireduction points for $\mathcal M_N$. Now, we apply the proposition to see that there are no index-persistent equireduction points for $\mathcal M_j$, for $j=0,1,\ldots,N$.
\end{proof}

\subsection{Complete Dicriticalness} This subsection is devoted to prove the following statement:
\begin{proposition}
\label{prop:totaldicriticalness}
All the blowing-ups in $\mathcal S$ are dicritical blowing-ups.
\end{proposition}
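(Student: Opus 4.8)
The plan is to argue by contradiction, transporting the two--dimensional mechanism of Lemma \ref{lema:oneblowing-up1} into the equireduction framework. Suppose that some blowing-up $\pi_j$ in $\mathcal S$ is non-dicritical, so that its exceptional divisor $D=E^j_j$ is invariant for $\mathcal F_j$. Since $\operatorname{Sing}(\mathcal F_N)=\emptyset$, Corollary \ref{cor:noindexpersistent} forbids index-persistent equireduction points at every stage; hence it suffices to exhibit a single index-persistent equireduction point for $\mathcal M_j$. I would locate it on the invariant component $D$, by showing that the Camacho-Sad index of the transverse foliation along $D$ is forced to have negative real part somewhere, exactly as the self-intersection $-1$ of an exceptional curve forces a negative index in dimension two.

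First I would reduce to the generic behaviour along the center. As $\mathcal S$ is adjusted, $Y_{j-1}$ has codimension at least two, and by Proposition \ref{prop:indestructible} it is contained in $Z_{j-1}=\operatorname{Sing}(\mathcal F_{j-1})$ (Proposition \ref{prop:essentialsets}). At an $\mathcal S$-equireduction point $Q\in Y_{j-1}$, Proposition \ref{prop:equireduction} gives $(Z_{j-1},Q)=(Y_{j-1},Q)$, so $Y_{j-1}$ is a smooth curve near $Q$ with full normal crossings with $E^{j-1}$. Choose a germ of surface $\Sigma$ transverse to $Y_{j-1}$ at such a generic $Q$; its strict transform $\Sigma'$ is the blowing-up of $\Sigma$ at $Q'=\Sigma\cap Y_{j-1}$, the restriction $\mathcal G=\mathcal F_j|_{\Sigma'}$ is well defined by the transversality of $\Sigma'$ to $Z_j$ (Lemma \ref{lema:indexpoints}), and the exceptional curve $C=D\cap\Sigma'\cong\mathbb P^1_{\mathbb C}$ is compact, invariant for $\mathcal G$ (because $D$ is invariant), with $C\cdot C=-1$ in $\Sigma'$. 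The Camacho-Sad formula on the compact invariant curve $C$ (see \cite{Cam-S,Can-C-D}) then reads
\[
\sum_{P\in C}\operatorname{CS}(\mathcal G,C,P)=C\cdot C=-1 ,
\]
so the real part of the sum is negative and there is a point $P\in C$ with $\operatorname{Re}\operatorname{CS}(\mathcal G,C,P)<0$. Such a $P$ lies in $\operatorname{Sing}(\mathcal G)\subset Z_j\cap D$, the component $D$ is non-dicritical, and $\Delta=\Sigma'$ is a transverse surface to $Z_j$ with $\Delta\cap D=C$. Letting $Q$ vary along $Y_{j-1}$, the negative-index points sweep out a curve in $Z_j\cap D$ whose generic point is an $\mathcal S$-equireduction point; this yields an index-persistent equireduction point for $\mathcal M_j$, contradicting Corollary \ref{cor:noindexpersistent} (the persistence being exactly Proposition \ref{prop:stability of index persistent}). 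Hence $\pi_j$ cannot be non-dicritical.

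The main obstacle is the case of a quadratic blowing-up, where $Y_{j-1}=\{P_0\}$ is a single point and the invariant exceptional divisor is $D\cong\mathbb P^2_{\mathbb C}$: there is no slice transverse to the center, and the clean self-intersection $-1$ must be replaced by the negative normal bundle $N_D=\mathcal O_{\mathbb P^2}(-1)$. A first useful remark is that equireduction points never sit over a point center: the case $s=0$ in the proof of Proposition \ref{prop:equireduction} shows that if a center equals $\{P\}$ then the full normal crossings condition fails, so $P$ cannot be an equireduction point. Thus the finitely many point centers do not affect the generic structure of the exceptional divisors, and I would complete the argument by a direct index computation on the invariant $\mathbb P^2_{\mathbb C}$: along an irreducible component $\Gamma\subset Z_j\cap D$ of the singular curve one has $N_D\cdot\Gamma=-\deg\Gamma<0$, which I expect to force the generic transverse Camacho-Sad index along $\Gamma$ to have negative real part, again producing an index-persistent equireduction point and the same contradiction. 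Carrying out this last computation rigorously—ruling out that a non-dicritical quadratic blowing-up could fail to produce a negative transverse index—is the delicate point of the proof.
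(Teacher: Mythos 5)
Your strategy (contradiction via an index-persistent equireduction point, killed by Corollary \ref{cor:noindexpersistent}) is the paper's strategy, and your monoidal case is essentially the paper's Proposition \ref{prop:firstmonoidal} --- except that your ``sweeping'' step is both shaky and unnecessary: you do not need the negative-index points to trace out a curve as $Q$ varies, because equireduction is inherited upward (Remark after Definition \ref{def:equireduction}: any point of $Z_j$ lying over an $\mathcal S$-equireduction point is again an $\mathcal S$-equireduction point), so the single point $P$ found over your chosen $Q$ already qualifies. The genuine gap is the quadratic case, which you explicitly leave open. Your premise there is wrong: the slice mechanism does \emph{not} break down for a point center. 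The paper's Proposition \ref{prop:totaldicriticalnessquadratic} takes a germ of smooth surface $\Delta$ through the origin (no transversality to the center is needed, only genericity); its strict transform $\Delta'$ is the blow-up of $\Delta$ at the origin, so $L'=\Delta'\cap E^1$ is a projective line with $L'\cdot L'=-1$ \emph{inside} $\Delta'$, and since $E^1$ is assumed invariant, $L'$ is invariant for $\mathcal G'=\mathcal F_1\vert_{\Delta'}$ (after checking that $\Delta'$ itself is not invariant). Camacho-Sad on $L'$ then yields a point with index of negative real part, and choosing $\Delta$ so that $L'$ avoids the finite set of non-equireduction points and meets $Z_1\cap E^1$ transversally makes it an index-persistent equireduction point. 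No computation with $N_D=\mathcal O_{\mathbb P^2}(-1)$ is required.

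Moreover, the replacement you sketch could not be completed as stated: it presupposes an irreducible curve $\Gamma\subset Z_j\cap D$, but at this stage of the paper nothing guarantees that the singular locus meets the invariant $\mathbb P^2_{\mathbb C}$ in a curve rather than in finitely many points (indeed, the later Proposition \ref{prop:nocompactcurves} --- which depends on the present proposition, so you may not use it --- excludes compact curves in the singular locus altogether). There is also a structural point worth noting: the paper only treats the \emph{first} blowing-up (quadratic and monoidal cases) and then gets all subsequent $\pi_j$ by induction on $N$, restarting at $\mathcal M_1$ with the dicritical divisor $E^1$ removed, as permitted by Subsection \ref{Radial Foliated Spaces}; working directly at an arbitrary stage $j$, as you do, forces you to manage the ambient divisor $E^{j-1}$ and the transversality of your slice to $Z_j$ at the point you produce, conditions the paper secures by localization and genericity.
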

We prove Proposition \ref{prop:totaldicriticalness} as a consequence of several propositions.
\begin{proposition}
\label{prop:totaldicriticalnessquadratic}
If the first blowing-up $\pi_1:\mathcal M_1\rightarrow\mathcal M_0$ is quadratic, then it is dicritical.
\end{proposition}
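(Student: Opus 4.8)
The plan is to argue by contradiction, producing an index-persistent equireduction point for $\mathcal M_1$, which is forbidden by Corollary \ref{cor:noindexpersistent}. So assume that the quadratic blowing-up $\pi_1$ is \emph{non-dicritical}, meaning that the exceptional divisor $D=E^1_1\cong\mathbb P^2$ is invariant for $\mathcal F_1$. The idea is to reduce to the two-dimensional Camacho–Sad computation already used in Lemma \ref{lema:oneblowing-up1}, where a single non-dicritical blowing-up forces the Camacho–Sad indices along the exceptional divisor to add up to $-1$, so that at least one of them has strictly negative real part. Here the role of the exceptional divisor of a surface blowing-up will be played by a generic line in $D$, obtained by slicing the three-dimensional picture with a generic plane.

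Concretely, first I would choose a generic plane $H$ through the origin. Its strict transform $\widetilde H\subset M_1$ is the blowing-up of $H$ at the origin, so the curve $L=\widetilde H\cap D$ is a generic line of $\mathbb P^2=D$, isomorphic to $\mathbb P^1$ and with self-intersection $L\cdot L=-1$ inside $\widetilde H$. Since a generic $H$ is not invariant for $\mathcal F_0$, the restriction $\mathcal F_0|_H$ is a genuine two-dimensional foliation, and after checking that restriction to $H$ commutes with the blowing-up one gets that $\mathcal G=\mathcal F_1|_{\widetilde H}$ is precisely its transform. Because $D$ is invariant for $\mathcal F_1$, the curve $L\subset D$ is invariant for $\mathcal G$. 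Applying the Camacho–Sad index theorem on $\widetilde H$ to the compact invariant curve $L$ yields $\sum_{P\in L}\operatorname{CS}(\mathcal G,L,P)=L\cdot L=-1$. In particular $\operatorname{Sing}(\mathcal G)\cap L\neq\emptyset$ and at least one point $P\in L$ has $\operatorname{CS}(\mathcal G,L,P)$ with strictly negative real part.

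It then remains to recognise such a $P$ as an index-persistent equireduction point of $\mathcal M_1$. For a generic choice of $H$ the singular points of $\mathcal G$ on $L$ are exactly the transverse intersections of $L$ with the singular curves $\Gamma_i\subset Z_1\cap D$ of $\mathcal F_1$; at each such point $\widetilde H$ is transverse to $Z_1$, the invariant curve $\widetilde H\cap D=L$ is the local trace of the non-dicritical component $D$, and the planar index $\operatorname{CS}(\mathcal G,L,P)$ coincides with the transverse Camacho–Sad index of $\mathcal F_1$ along $\Gamma_i$ that enters the definition of index-persistent equireduction point. Choosing $H$ generic so that these finitely many intersection points avoid the (finite) non-equireduction locus of $Z_1$, the point $P$ found above is an index-persistent equireduction point for $\mathcal M_1$, contradicting Corollary \ref{cor:noindexpersistent}. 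Hence $\pi_1$ must be dicritical.

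I expect the technical heart of the argument to lie in the last two justifications: the commutation of the blowing-up with the restriction to the generic plane, and the verification that every singular point of $\mathcal G$ on $L$ genuinely comes from $Z_1$ rather than from a mere tangency of $\widetilde H$ with a regular leaf. For the latter one uses that along the invariant divisor $D$ the leaf of $\mathcal F_1$ through a regular point is $D$ itself, which is transverse to $\widetilde H$, so no spurious tangency singularities appear on $L$. Combined with the identification of the planar Camacho–Sad index with the transverse index of the spatial foliation, this closes the reduction to the two-dimensional statement.
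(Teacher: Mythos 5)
Your proof is correct and takes essentially the same route as the paper's: a generic plane through the origin whose strict transform meets the invariant exceptional $\mathbb P^2_{\mathbb C}$ in a line of self-intersection $-1$, the Camacho--Sad index theorem forcing a singular point whose index has strictly negative real part, and the identification of that point as an index-persistent equireduction point contradicting Corollary \ref{cor:noindexpersistent}. The genericity conditions you impose (avoiding the finite non-equireduction locus and cutting $Z_1\cap E^1$ transversally, with no spurious tangency singularities since the leaf through a regular point of the invariant divisor is the divisor itself) are precisely conditions a) and b) in the paper's choice of the surface $\Delta$ and the subsequent verification there.
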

\begin{proof}
Assume, by contradiction, that $\pi_1$ is non-dicritical and hence the exceptional divisor $E^1=\pi_1^{-1}(\mathbf 0)$ is invariant for $\mathcal F_1$. Let us recall that $E^1$ is isomorphic to the projective plane $\mathbb P^2_{\mathbb C}$.

Let us consider the intersection $Z_1\cap E^1$ of the exceptional divisor $E^1$ with the set $Z_1=\operatorname{Sing}(\mathcal F_1)\subset M_1$. Recall that $Z_1\cap E^1$ is a closed analytic subset of $E^1$ of dimension at most one, then it is a (maybe empty) finite union of points and curves contained in $E^1$. Moreover, the set $T\subset Z_1\cap E^1$ of non-$\mathcal S$-equireduction points in $Z_1\cap E^1$ is a finite set.

Select a non-singular surface $(\Delta,\mathbf 0)\subset (\mathbb C^3,\mathbf 0)$ whose strict transform $\Delta'$ by $\pi_1$ satisfies the following properties:
\begin{enumerate}[a)]
\item $T\cap L'=\emptyset$, where $L'=\Delta'\cap E^1$.
\item The projective line $L'$ cuts $Z_1\cap E^1$ in a transverse way.
\end{enumerate}
Indeed, the set of projective lines satisfying the above two properties is a non-empty Zariski open set.

 Let us show that $\Delta'$ is not invariant for $\mathcal F_1$ and hence we also have that $\Delta$  is not invariant for $\mathcal F_0$. Note that $L'\cap Z_1$ is a finite set.  Consider a point $Q\in L'\setminus Z_1$. Recall that we are assuming that $E^1$ is invariant for $\mathcal F_1$. Since $Q$ is a regular point for $\mathcal F_1$ and $\Delta'$ is transverse to $E^1$,  we deduce that $\Delta'$ is not invariant for $\mathcal F_1$.

 Let us consider the restrictions
$$
\mathcal G=\mathcal F_0\vert_{\Delta}; \quad \mathcal G'=\mathcal F_1\vert_{\Delta'}.
$$
Looking at a point $Q\in L'\setminus Z_1$ as before, we deduce that $L'$ is invariant for $\mathcal G'$. In other words, the blowing-up
$$
(\Delta,\emptyset, \mathcal G)\leftarrow (\Delta', L',\mathcal G')
$$
is a non-dicritical blowing-up. Since the self-intersection  of $L'$ in $\Delta'$ is equal to $-1$, there is at least one point $P\in L'$ such that the real part of Camacho-Sad index of $\mathcal G'$ with respect to $L'$ at $P$ is strictly negative. This is an index-persistent equireduction point, contradiction by Corollary \ref{cor:noindexpersistent}
\end{proof}

\begin{remark} \label{rk:finalblowing-up}
In the quadratic case, we always have $N \geq 2$. Indeed, if $N$ would be equal to $1$, we would have and induced foliation over the projective plane without singularities and this is not possible.
\end{remark}

Now, we go to the case of a monoidal first blowing-up.
\begin{proposition}
\label{prop:firstmonoidal}
 Assume that the center $Y_0$ of the first blowing-up $\pi_1$ in the sequence $\mathcal S$ is a curve.  Then $\pi_1$ is dicritical.
\end{proposition}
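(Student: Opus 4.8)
The plan is to show that if the first blowing-up $\pi_1$ with center a curve $Y_0$ were \emph{non-dicritical}, then we could manufacture an index-persistent equireduction point somewhere on the exceptional divisor $E^1_1=\pi_1^{-1}(Y_0)$, contradicting Corollary \ref{cor:noindexpersistent}. This mirrors exactly the strategy used for the quadratic case in Proposition \ref{prop:totaldicriticalnessquadratic}, so the main task is to adapt that argument from a point-center (where $E^1$ is a projective plane) to a curve-center (where $E^1_1$ is a ruled surface over $Y_0$), and to produce the transverse slice $\Delta$ at a generic point of $Y_0$ rather than at the origin.

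Concretely, I would argue as follows. First, reduce to $E^0=\emptyset$ by the component-elimination principle from Subsection \ref{Radial Foliated Spaces}, so that we may assume $\mathcal M_0=((\mathbb C^3,\mathbf 0),\emptyset,\mathcal F_0)$ with $Y_0$ a non-singular curve through the origin. Since the resolution sequence is \emph{adjusted}, Proposition \ref{prop:indestructible} forces $Y_0\subset\operatorname{Sing}(\mathcal F_0)$; in fact by Proposition \ref{prop:essentialsets} we have $Y_0=Y_0^0\subset Z_0=\operatorname{Sing}(\mathcal F_0)$. Assume for contradiction that $\pi_1$ is non-dicritical, so $E^1_1$ is invariant for $\mathcal F_1$. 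Choose a generic point $Q\in Y_0$ that is an $\mathcal S$-equireduction point (possible, since the non-equireduction points form a finite set by the Remark following Definition \ref{def:equireduction}), and pick a germ of non-singular surface $(\Delta,Q)$ transverse to $Y_0$ at $Q$. I would select $\Delta$ generically so that its strict transform $\Delta'$ meets $E^1_1$ along a curve $L'=\Delta'\cap E^1_1$ avoiding the finitely many ``bad'' points, exactly as in properties a) and b) of Proposition \ref{prop:totaldicriticalnessquadratic}.

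The heart of the argument is then the \textbf{self-intersection computation}. The blowing-up $\pi_1$ restricts over the transverse slice to a two-dimensional blowing-up $(\Delta,\emptyset,\mathcal G)\leftarrow(\Delta',L',\mathcal G')$, where $\mathcal G=\mathcal F_0|_\Delta$ and $\mathcal G'=\mathcal F_1|_{\Delta'}$. Because $E^1_1$ is invariant and $\Delta'$ is transverse to it, looking at a regular point of $\mathcal F_1$ on $L'$ shows that $L'$ is invariant for $\mathcal G'$ and that this induced blowing-up is non-dicritical. Here the key point is that when we cut the monoidal blowing-up of a \emph{curve} by a transverse surface, we recover an ordinary point blowing-up of the surface, so the exceptional curve $L'$ has self-intersection $-1$ in $\Delta'$. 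The sum of Camacho-Sad indices of $\mathcal G'$ along $L'$ therefore equals $L'\cdot L'=-1<0$, so some point $P\in L'$ carries a Camacho-Sad index with strictly negative real part. Since $P$ lies on the non-dicritical component $E^1_1$ and (by the generic choice) is an $\mathcal S$-equireduction point, it is an index-persistent equireduction point for $\mathcal M_1$, contradicting Corollary \ref{cor:noindexpersistent}.

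The step I expect to be the \textbf{main obstacle} is justifying cleanly that the transverse slice of the monoidal blowing-up really behaves like a quadratic blowing-up of the surface $\Delta$, giving $L'\cdot L'=-1$ rather than some other value; this requires knowing that $\Delta$ is transverse to $Y_0$ (not merely to $E^1_1$) and that $\Delta\cap Y_0=\{Q\}$ is a single point, so that the center meets the slice transversally in one point. One must also verify that the genericity conditions (transversality of $L'$ to $Z_1\cap E^1_1$, avoidance of the finite bad set $T$, and non-invariance of $\Delta'$) can be met simultaneously by a Zariski-open choice of slice, which is where the $\mathcal S$-equireduction hypothesis at $Q$ does the real work in guaranteeing that $(Z_1,P')$ stays a non-singular curve with full normal crossings after further blowing-ups.
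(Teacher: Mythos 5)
Your proposal is correct and takes essentially the same approach as the paper: the paper's proof likewise localizes $\mathcal S$ at an $\mathcal S$-equireduction point of $Y_0$, takes a two-plane $\Delta$ transverse to $Y_0$, observes that $L'=\Delta'\cap E^1_1$ is invariant for $\mathcal G'=\mathcal F_1\vert_{\Delta'}$ with self-intersection $-1$, and uses Camacho-Sad to produce an index-persistent equireduction point contradicting Corollary \ref{cor:noindexpersistent}. The only cosmetic difference is that the paper derives the needed transversality and equireduction properties of the singular point over $Q$ directly from the localization (as in Proposition \ref{prop:stability of index persistent}), where you invoke a genericity argument modeled on the quadratic case; both are valid.
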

\begin{proof} The proof follows with arguments very similar to the ones in the proof of Proposition \ref{prop:totaldicriticalnessquadratic}. Let us give the main ideas.
	
By taking an appropriate representative of the germ $Y_0$, we can consider an equireduction point $Q_0\in Y_0$ close to the origin. In order to prove that $E^1=\pi^{-1}(Y_0)$ is dicritical for $\mathcal M_1$, it is enough to localize the sequence $\mathcal S$ at $Q_0$. Then, without loss of generality, we can assume that the origin $\mathbf 0$ is an $\mathcal S$-equireduction point.

Let us start our argument by contradiction, assuming that $D=E^1_1$ is non-dicritical for $\mathcal M_1$.
Take a regular two-plane $\Delta\subset (\mathbb C^2,\mathbf 0)$ transverse to $Y_0$. Let $\Delta'$ be the strict transform of $\Delta$ by $\pi_1$. The curve
$$
L'=\pi_1^{-1}(\mathbf 0)=\Delta'\cap E^1
$$
is invariant for $\mathcal G'=\mathcal F_1\vert_{\Delta'}$. Moreover, there is a singular point $P'$ for $\mathcal G'$ such that the Camacho-Sad index of $\mathcal G'$ at $P'$ with respect to $L'$ has strictly negative real part. The point $P\in Z_1$, by arguments as in the proof of Proposition \ref{prop:stability of index persistent}, hence it is an index-persistent equireduction point and we contradict Corollary \ref{cor:noindexpersistent}.
\end{proof}

Now we end the proof of Proposition \ref{prop:totaldicriticalness} by induction on $N$. If $N=1$, the result is true. If $N\geq 2$,  we can re-start at the foliated space obtained from $\mathcal M_1$ by skipping the dicritical divisor $E^1$, in view of
 the results in Subsection \ref{Radial Foliated Spaces}.

\begin{remark}
Note that the last blowing-up $\pi_N$ cannot be a quadratic one, by the same arguments as in Remark \ref{rk:finalblowing-up}.
\end{remark}
\subsection{Corners of the Divisor}
\label{Corners of the divisor}

Let $(M,E)$ be an ambient space of dimension $n$. A point $P\in M$ is called to be a {\em corner for $(M,E)$} when the number of irreducible components of $E$ through $P$ is equal to $n$.
\begin{remark}
	\label{stability of corners}
	Let $(M',E')\rightarrow (M,E)$ be a blowing-up of ambient spaces. If there is a corner point for $(M,E)$, then there is also a corner point for $(M',E')$, Equivalently, if $(M',E')$ is without corners, then $(M,E)$ is also without corners.
\end{remark}

If $\mathcal M=(M,E,\mathcal F)$ is a foliated space, a {\em dicritical corner for $\mathcal M$} is, by definition, a corner for the ambient space $(M,E_{\operatorname{dic}})$, where $E_{\operatorname{dic}}$ is the union of the dicritical components of $E$. By Remark \ref{rk:nodicritical corners}, we know that if all the points in $\mathcal M$ are regular simple points, there are no dicritical corners.

\begin{proposition}
 \label{prop:nocorners}
 Consider the sequence $\mathcal S$ in Equation \eqref{eq:desingularization sequence}. Then each ambient space $(M_i,E^i)$ is without corners.
\end{proposition}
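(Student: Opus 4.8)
The plan is to reduce the absence of corners at every stage of $\mathcal S$ to the absence of corners at the final stage $\mathcal M_N$, and there to invoke the classification of simple regular points; the descent is then provided by the stability property of corners under blowing-up.

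First I would record that the whole divisor is dicritical at every stage. Since the starting divisor is empty, $E^0=\emptyset$, every irreducible component of $E^i$ arises either as an exceptional divisor of one of the blowing-ups $\pi_1,\dots,\pi_i$ or as a strict transform of such a divisor. By Proposition \ref{prop:totaldicriticalness} all these blowing-ups are dicritical, so each exceptional divisor is a dicritical (that is, non-invariant) component. Non-invariance of a component is a generic condition, read at a regular point of the component away from the centers, and it is preserved by the strict transform, since transversality to $\mathcal F$ at a generic point persists under blowing-up. Consequently $E^i_{\operatorname{inv}}=\emptyset$ and $E^i=E^i_{\operatorname{dic}}$ for every $i$. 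In particular, a corner for the ambient space $(M_i,E^i)$ is exactly a \emph{dicritical corner} for $\mathcal M_i$.

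Next I would settle the claim at the top of the tower. All points of $M_N$ are simple regular points for $\mathcal M_N$, and by Remark \ref{rk:nodicritical corners} such a point is never a dicritical corner: either there is exactly one invariant component of $E^N$ through it, which is impossible here since $E^N_{\operatorname{inv}}=\emptyset$, or all the components of $E^N$ through it are dicritical and their number is strictly less than $n=3$. Hence no point of $M_N$ lies on three components of $E^N$, so $(M_N,E^N)$ is without corners. Finally I would descend the tower: Remark \ref{stability of corners} states that if $(M_{i+1},E^{i+1})$ is without corners then so is $(M_i,E^i)$. Applying this to $\pi_{i+1}$ for $i=N-1,N-2,\dots,0$, starting from the corner-free $(M_N,E^N)$, yields by inverse induction that every $(M_i,E^i)$ is without corners.

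The main obstacle I anticipate is the first step, namely making sure that the identification $E^i=E^i_{\operatorname{dic}}$ genuinely holds, i.e. that the dicritical character of a component is not lost when passing to its strict transform. This is exactly where the hypothesis $E^0=\emptyset$ combined with total dicriticalness (Proposition \ref{prop:totaldicriticalness}) is essential: it guarantees that every component one ever sees is dicritical, so that corners and dicritical corners coincide. Once that identification is secured, the remaining two steps are immediate consequences of Remarks \ref{rk:nodicritical corners} and \ref{stability of corners}.
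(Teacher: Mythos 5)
Your proposal is correct and follows essentially the same route as the paper: reduce to the last stage $(M_N,E^N)$ via the stability of corners under blowing-up (Remark \ref{stability of corners}), then rule out corners there using that all points are simple regular and all components of $E^N$ are dicritical (Remark \ref{rk:nodicritical corners}). The only difference is that you spell out explicitly why $E^i=E^i_{\operatorname{dic}}$ (via $E^0=\emptyset$, Proposition \ref{prop:totaldicriticalness}, and persistence of dicriticalness under strict transforms), a fact the paper simply recalls.
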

\begin{proof} It is enough to show that $(M_N,E^N)$ is without corners. Recall that $\mathcal M_N$ has only simple regular points and that all the irreducible components of $E^N$ are dicritical ones. In this situation, no corners are possible.
\end{proof}
\begin{remark}
The above Proposition \ref{prop:nocorners} gives indications on the blowing-ups underlying the sequence $\mathcal S$. Namely,
\begin{enumerate}
	\item If the center $Y_{i-1}$ is a point, then it cannot be in the intersection of two components of the divisor $E^{i-1}$.
	\item  If the center  $Y_{i-1}$ is a curve with $Y_{i-1}\subset E^{i-1}$, then there is no component of $E^{i-1}$ transverse to $Y_{i-1}$. That is, the curve $Y_{i-1}\subset E^{i-1}$ has full normal crossings with $E^{i-1}$.
\end{enumerate}
\end{remark}

\subsection{Restrictions to the Components of the Divisor}
\label{Restrictions to the Components of the Divisor}
Consider an intermediate step  $\mathcal M_j=(M_j,E^j,\mathcal F_j)$ in the sequence $\mathcal S$ and let $D$ be an irreducible component of the divisor $E^j$. We know that $D$ is a dicritical component of $\mathcal M_j$.  Hence, we get a two-dimensional foliated space
$$
\mathcal M_j\vert_D=(D,E^j\vert_D,\mathcal F_j\vert_D),
$$
obtained by restriction of $\mathcal M_j$ over $(D,E^j\vert_D)$. The normal crossings divisor $E^j\vert_D$ is given by the intersections with $D$ of the components of $E^j$ not equal to $D$.
\begin{lema}
\label{lema:restriccion}
The foliated space $\mathcal M_j\vert_D$ is a two-dimensional radial foliated space and all the irreducible components of $E^j\vert_D$ are dicritical components for $\mathcal M_j\vert_D$.
\end{lema}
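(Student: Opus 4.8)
The plan is to exploit the equivalence between almost radial and radial foliated spaces in dimension two (established in the Proposition preceding Theorem \ref{teo:dimensiontworadial}), so that it suffices to produce \emph{some} resolution sequence for the surface foliated space $\mathcal M_j|_D$. Such a sequence will be obtained by restricting the tail $\pi_{j+1},\ldots,\pi_N$ of $\mathcal S$ to $D$ and its successive strict transforms. First I would note that, by Proposition \ref{prop:totaldicriticalness}, every component of $E^j$ is dicritical, so $D$ is not invariant for $\mathcal F_j$ and the restriction $\mathcal F_j|_D$ is well defined; since invariance is preserved by strict transforms, the strict transform $D^{(k)}$ of $D$ in $M_k$ remains dicritical for every $k\ge j$, and all the restrictions $\mathcal F_k|_{D^{(k)}}$ make sense.

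Next I would track how each blowing-up $\pi_{k+1}$ acts on $D^{(k)}$. Using the no-corners property (Proposition \ref{prop:nocorners}) together with the full normal crossings condition on the centers, the center $Y_k$ either misses $D^{(k)}$ (so the restriction is an isomorphism), meets $D^{(k)}$ in a single point---whether $Y_k$ is that point or a curve crossing $D^{(k)}$ transversally---(so the restriction is the quadratic blowing-up of the surface at that point), or is a curve contained in $D^{(k)}$ (so $D^{(k+1)}\to D^{(k)}$ is an isomorphism that merely adds the trace of the new exceptional component to the divisor). In every case the restricted center is invariant for $\mathcal F_k|_{D^{(k)}}$---points are automatically invariant, and a curve $Y_k\subset D^{(k)}$ is invariant because $Y_k$ is invariant for $\mathcal F_k$---so each restricted blowing-up is admissible, and the transform of the restricted foliation coincides with the restriction of the transformed foliation (this compatibility for a dicritical divisor is the one routine point to verify). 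Finally, since all points of $M_N$ are simple regular for $\mathcal M_N$ and $D^{(N)}$ is dicritical, restricting the local normal form $\mathcal F_N=(dx_1=0)$, $E^N\subset(x_1x_2x_3=0)$ to $D^{(N)}$ (where $D^{(N)}$ is one of $(x_2=0)$ or $(x_3=0)$, never the invariant component $(x_1=0)$) shows that every point of $\mathcal M_N|_{D^{(N)}}$ is a simple regular point. Hence the restricted chain $\mathcal M_j|_D\leftarrow\cdots\leftarrow\mathcal M_N|_{D^{(N)}}$ is a resolution sequence, so $\mathcal M_j|_D$ is almost radial, and therefore radial.

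For the second assertion, each irreducible component of $E^j|_D$ has the form $C=D\cap D'$ for some other component $D'$ of $E^j$, which is again dicritical. I would prove that $C$ is not invariant for $\mathcal G=\mathcal F_j|_D$ by transporting the question to the final stage: invariance of a curve for a foliation on a surface is a birational invariant, so $C$ is $\mathcal G$-invariant if and only if its strict transform $\widetilde C$ in $D^{(N)}$ is invariant for $\mathcal F_N|_{D^{(N)}}$. A generic point of $\widetilde C$ lies on $D^{(N)}\cap {D'}^{(N)}$ and is a simple regular point of $\mathcal M_N$; there I choose coordinates with $\mathcal F_N=(dx_1=0)$ and $E^N\subset(x_1x_2x_3=0)$, and since $D^{(N)}$ and ${D'}^{(N)}$ are both dicritical (hence distinct from the invariant component $(x_1=0)$) I may take $D^{(N)}=(x_2=0)$ and ${D'}^{(N)}=(x_3=0)$. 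Restricting to $D^{(N)}$ with coordinates $(x_1,x_3)$, the curve $\widetilde C=(x_3=0)$ is transverse to the leaves $x_1=\mathrm{const}$, hence not invariant. Therefore $C$ is dicritical for $\mathcal M_j|_D$, as claimed.

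The step I expect to be the main obstacle is the bookkeeping in the second paragraph: verifying, case by case, that restricting each monoidal or quadratic blowing-up to the moving dicritical surface $D^{(k)}$ produces an admissible blowing-up of surfaces, and that foliation transform genuinely commutes with restriction to $D^{(k)}$. Once this functoriality is in place, both conclusions follow from the known local structure of simple regular points and the two-dimensional theory of Subsection \ref{Germs of Radial Foliated Spaces in Dimension Two}.
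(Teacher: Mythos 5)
Your overall strategy is exactly the paper's (whose proof is a two-line version of what you wrote): restrict the tail of $\mathcal S$ to $D$ and its strict transforms to produce a resolution sequence for $\mathcal M_j\vert_D$, invoke the two-dimensional equivalence of almost radial and radial, and test dicriticalness of the components of $E^j\vert_D$ at the last step via the normal form of simple regular points. The second half of your argument and the point-blow-up bookkeeping are correct. However, one step is genuinely false: the claim that a curve center $Y_k\subset D^{(k)}$ is invariant for the restriction $\mathcal F_k\vert_{D^{(k)}}$ ``because $Y_k$ is invariant for $\mathcal F_k$''. Since $\mathcal S$ is adjusted, $Y_k\subset\operatorname{Sing}(\mathcal F_k)$, so invariance of $Y_k$ for $\mathcal F_k$ is vacuous (a local generator $\omega$ vanishes identically along $Y_k$), while the restricted foliation is defined only after dividing $\omega\vert_{D^{(k)}}$ by its common factor --- typically a power of the equation of $Y_k$ in $D^{(k)}$ --- and after this saturation it can be, and here always is, transverse to $Y_k$. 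Indeed, after $\pi_{k+1}$ the curve $Y_k$ is identified with the trace of the dicritical divisor $\pi_{k+1}^{-1}(Y_k)$ on $D^{(k+1)}\cong D^{(k)}$, and your own (correct) argument for the second assertion shows that such traces are \emph{never} invariant; so your first paragraph contradicts your third. A concrete instance is the paper's example $\mathcal R_2$, with $\omega=y^2dx-z^2dy+2yzdz$: after blowing up $y=z=0$, the singular locus of $\mathcal F_1$ is a curve $Y_1$ contained in the dicritical exceptional divisor, the restriction of $\mathcal F_1$ to that divisor is the vertical foliation $dx=0$, and $Y_1$ is a section transverse to its leaves.

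The consequence is that the restricted chain, as you build it --- including the codimension-one steps that add the trace of $Y_k$ to the surface divisor --- is not a sequence of admissible blowing-ups, hence not a resolution sequence in the sense of Definition \ref{def:radialfoliatedspace} and its almost radial counterpart. The repair is short and is already allowed by the paper: codimension-one centers can always be eliminated from a resolution sequence, and shrinking the final divisor cannot destroy the simple regular property. So you should simply discard the steps with $Y_k\subset D^{(k)}$ (they induce the identity morphism on the surface) and keep only the induced point blowing-ups, which are automatically admissible and controlled; the final restricted space still has only simple regular points, and the rest of your argument, including the dicriticalness part, goes through unchanged.
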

\begin{proof} The divisor $D$ induces a restricted sequence $\mathcal S\vert_D$ that provides a resolution sequence for $\mathcal M_j\vert_D$. The dicriticalness of the components of $E^j\vert_D$ can be tested in the last step of $\mathcal S$.
\end{proof}

Note that $\operatorname{Sing}(\mathcal F_j|_D)$ is a finite subset of $D$ and that $\mathcal F_j|_D$ is a  cart-wheel foliation at these points.

\begin{proposition} \label{prop:restriccionextendidadacomponentes}
	Consider an index $0\leq j\leq N$ and let $D$ be an irreducible component of the divisor $E^j$. Let us denote by $H$ the (finite) union of the irreducible curves contained in $\operatorname{Sing}(\mathcal F_j)\cap D=Z_j\cap D$. The following statements hold:
	\begin{enumerate}
		\item The divisor $\tilde H=E^{j}\vert_D\cup H$ is a normal crossings divisor of $D$, the irreducible components of  $\tilde H$ are dicritical for the restriction
		$
		\mathcal G=\mathcal F_j|_D
		$
		and  $\mathcal N_{j,D}=(D,\tilde H, \mathcal G)$ is a radial foliated space of dimension two.
		\item If $j<N$ and $Y_{j}\cap H\ne\emptyset$, then $Y_{j}$ is a curve contained in $H$.
	\end{enumerate}
\end{proposition}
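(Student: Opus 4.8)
The plan is to reduce everything to the local study of $\mathcal{G}=\mathcal{F}_j\vert_D$ near a generic point of each irreducible curve $\Gamma\subset H$, and then to assemble the global statement through Corollary \ref{cor:anhadircomponentes}. The starting point is Lemma \ref{lema:restriccion}, which already gives that $(D,E^j\vert_D,\mathcal{G})$ is a two-dimensional radial foliated space with $E^j\vert_D$ dicritical; by Corollary \ref{cor:anhadircomponentes} its dicritical components avoid $\operatorname{Sing}(\mathcal{G})$, are transverse to $\mathcal{G}$ and are pairwise disjoint. Thus for part (1) it remains to prove that each curve $\Gamma\subset H$ is a dicritical (non-invariant) component for $\mathcal{G}$, transverse to $\mathcal{G}$, disjoint from $\operatorname{Sing}(\mathcal{G})$ and from the other components of $\tilde H$, and that $\tilde H$ has normal crossings; then $\mathcal{N}_{j,D}$ is radial by Corollary \ref{cor:anhadircomponentes}. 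I would carry out the local study at an $\mathcal{S}$-equireduction point $P\in\Gamma$, leaving the finitely many special points to be absorbed at the end.

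First I would choose coordinates $(x,y,z)$ with $D=(z=0)$ and $(Z_j,P)=\Gamma=(x=z=0)$, using that at an equireduction point $(Z_j,P)$ is a non-singular curve with full normal crossings with $E^j$ (Definition \ref{def:equireduction}); this yields the normal crossings of $\tilde H$ at $P$ directly, since the remaining components of $E^j\vert_D$ are traces of divisors through which $\Gamma$ has full normal crossings. The crux is the dicriticalness of $\Gamma$. I would take a germ of surface $\Delta$ transverse to $\Gamma$ at $P$ and restrict the sequence $\mathcal{S}$ to the transverse slices; since $P$ is an equireduction point, this produces a resolution sequence of $(\Delta,E^j\vert_\Delta,\mathcal{F}_j\vert_\Delta)$ ending in simple regular points, so the slice is a two-dimensional radial foliated space. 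As $\Gamma\subset\operatorname{Sing}(\mathcal{F}_j)$, the point $P$ is singular for $\mathcal{F}_j\vert_\Delta$, and Theorem \ref{teo:dimensiontworadial} forces it to be a cart-wheel with $E^j\vert_\Delta$ invariant; in particular the trace $D\cap\Delta$ is invariant for $\mathcal{F}_j\vert_\Delta$. Writing $\omega=a\,dx+b\,dy+c\,dz$, this invariance is exactly $a(x,y,0)\equiv 0$, whence $i_D^*\omega=b(x,y,0)\,dy$ and $\mathcal{G}$ has leaves $y=\text{const}$ near $P$; since $\Gamma=(x=0)$ in $D$ is then transverse to these leaves, $\Gamma$ is non-invariant, transverse to $\mathcal{G}$ and avoids $\operatorname{Sing}(\mathcal{G})$ at $P$. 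This is compatible with $D$ being globally dicritical, as the dicriticalness of $D$ sees $b\not\equiv 0$, which the slice does not detect. The remaining finitely many points of $\Gamma$ where some branch of $Z_j$ is transverse to $D$, or where $\Gamma$ fails to be equireduction, would be excluded by the same picture, giving disjointness from $\operatorname{Sing}(\mathcal{G})$ and from the other components of $\tilde H$ along all of $\Gamma$.

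For part (2), recall that the center $Y_j$ of $\pi_{j+1}$ is contained in $\operatorname{Sing}(\mathcal{F}_j)=Z_j$ (see the features listed in Subsection \ref{Radial Foliated Spaces} for adjusted sequences), as is $H$. Pick $q\in Y_j\cap H$ and let $\Gamma\subset H$ be an irreducible curve through $q$. At an equireduction point Proposition \ref{prop:equireduction} gives $(Z_j,q)=(Y_j,q)$; since $(Z_j,q)$ is a non-singular curve, $Y_j$ cannot be a single point, and moreover $(Y_j,q)=(Z_j,q)=(\Gamma,q)$, so the irreducible curves $Y_j$ and $\Gamma$ share a germ and hence coincide, giving $Y_j=\Gamma\subset H$. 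The point-center alternative is excluded because it would force $(Z_j,q)$ to be zero-dimensional, against equireduction.

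The main obstacle is the dicriticalness step in part (1): passing from the three-dimensional singular structure along $\Gamma$ to the non-invariance of $\Gamma$ for the restricted foliation $\mathcal{G}$. Everything hinges on the transverse-slice principle, namely that restricting $\mathcal{S}$ at an equireduction point yields a genuine two-dimensional resolution sequence, so that the classification of Theorem \ref{teo:dimensiontworadial} applies and pins down the cart-wheel normal form with invariant divisor on the slice. Controlling the finitely many non-equireduction points of $\Gamma$, so that disjointness from $\operatorname{Sing}(\mathcal{G})$ and the normal crossings of $\tilde H$ hold along all of $\Gamma$ and not merely generically, is the secondary difficulty I would treat with care, again via the equireduction description of $Z_j$.
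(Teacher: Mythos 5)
Your proposal takes a genuinely different route from the paper, and it contains genuine gaps at exactly the two places you yourself flag as delicate. The paper proves the proposition by descending induction on $j$ (induction on $N-j$): Statement (2) at step $j$ is obtained by contradiction, since a center $Y_j$ meeting $H$ in a finite set would induce a point blowing-up $D'\to D$ creating a corner of dicritical components in $\mathcal N_{j+1,D'}$, which is impossible for the two-dimensional radial foliated space given by the induction hypothesis; then Statement (1) at step $j$ is transported back from $\mathcal N_{j+1,D'}$ through $\pi_{j+1}$, everything being trivial at $j=N$. Your first gap is the ``transverse-slice principle'': the claim that localizing $\mathcal S$ at an $\mathcal S$-equireduction point and restricting to a transverse surface $\Delta$ yields a genuine two-dimensional resolution sequence for $(\Delta,E^j\vert_\Delta,\mathcal F_j\vert_\Delta)$. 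This is asserted, not proven, and it is not a formal consequence of Definition \ref{def:equireduction} or Proposition \ref{prop:equireduction}: those control the singular loci $Z_{j+s}$, but say nothing about compatibility of the foliation transforms with restriction to $\Delta$. One would have to show that $\mathcal F_{j+s}$ restricted to the strict transform of $\Delta$ equals the transform of $\mathcal F_j\vert_\Delta$, which requires matching the order used to divide $\pi^*\omega$ (a log-generic order along a curve in dimension three versus an order at a point in dimension two) and excluding tangency curves of $\Delta$ with $\mathcal F_j$, i.e.\ common factors of $\omega\vert_\Delta$; without the latter you cannot even assert, as you do, that $P\in\Gamma\subset\operatorname{Sing}(\mathcal F_j)$ is singular for $\mathcal F_j\vert_\Delta$. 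The paper deliberately avoids this machinery: its only use of transverse slices goes through the behavior of the Camacho--Sad index under a \emph{single} blowing-up (Remark \ref{remark:index-persistent}, Proposition \ref{prop:stability of index persistent}), never through a full resolution of a slice.

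The second gap is structural: your local analysis lives at equireduction points, while the content of the proposition is concentrated at the finitely many non-equireduction points. Disjointness of the components of $\tilde H$, absence of singular points of $\mathcal G$ along $H$, and the normal crossings of $\tilde H$ can only fail at finitely many points, and those are precisely the points your slice picture cannot see; saying they ``would be excluded by the same picture'' is not an argument, because the picture is unavailable there. The same issue makes your proof of Statement (2) circular. In the configurations that Statement (2) must rule out --- $Y_j$ a point of $H$, or $Y_j$ a curve meeting $H$ in a nonempty finite set --- \emph{every} point $q\in Y_j\cap H$ is automatically a non-equireduction point: either $(Z_j,q)$ would have to be the point $\{q\}$, or it would contain two distinct curve germs $(Y_j,q)$ and $(\Gamma,q)$, so in both cases it is not a non-singular irreducible curve germ. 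Hence Proposition \ref{prop:equireduction} never applies at the points you need, and your argument excludes nothing. The paper handles exactly these cases with the dicritical-corner contradiction in $\mathcal N_{j+1,D'}$ (via Corollary \ref{cor:anhadircomponentes}), which requires already knowing Statement (1) one step further along the sequence; some form of that descending induction from the desingularized end $j=N$ appears unavoidable.
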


\begin{proof}Induction on $N-j$.  If $j=N$ we are done, since $\mathcal M_N$ is a desingularized foliated space without singularities, then $H=\emptyset$ and we are done. Assume that $j<N$. Let us first prove Statement (2). We do an argument by contradiction, assuming that $Y_{j}\cap H\ne\emptyset$ is a single point $Q$ (actually, we obtain the same contradiction when $Y_{j}\cap H$ is a non-empty finite set of points). The morphism $\pi_{j+1}$ induces a blowing-up
	$$
	D'\rightarrow D,
	$$
	centered at the point $Q$, where $D'$ is the strict transform of $D\subset M_{j}$ by $\pi_{j+1}$. Invoking induction hypothesis, we see that this blowing-up will create a dicritical corner in
	$\mathcal N_{j+1,D'}$. This is not possible for a two dimensional radial foliated space.
	
Let us see that Statement (2) implies Statement (1). We have the following possibilities:
\begin{enumerate}
\item $Y_j \cap D=\emptyset$. In this case $\mathcal N_{j,D}$ is identical to $\mathcal N_{j+1,D'}$.
\item $Y_j \cap D\ne \emptyset$ and $Y_j \cap H\ne \emptyset$. By Statement (2) we obtain that $Y_j$ is a curve contained in $H$. Then $\mathcal N_{j,D}$ is also identical to $\mathcal N_{j+1,D'}$.
\item $Y_j \cap D\ne \emptyset$ and $Y_j \cap H= \emptyset$. In this case  $Y_j \cap D$ is a finite union of points. Indeed, if $Y_j \cap D$ is a curve, it is contained in $H$. Now, the new created exceptional divisors are disjoint from the strict transform of $H$. We deduce immediately the desired properties for $\mathcal N_{j,D}$ from the ones of $\mathcal N_{j+1,D'}$, that are guaranteed by induction hypothesis.
	\end{enumerate}
	The proof is ended.
\end{proof}

\begin{corollary}
\label{cor:comonenteslugarsingulareneldivisor}
Consider an index $1\leq j\leq N-1$. The set
$
E^j\cap\operatorname{Sing}(\mathcal F_j)
$
is a finite union of points and non-singular irreducible curves mutually disjoint.
\end{corollary}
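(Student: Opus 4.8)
The plan is to deduce the statement from the two-dimensional analysis of the restricted foliated spaces $\mathcal N_{j,D}$ furnished by Proposition~\ref{prop:restriccionextendidadacomponentes}, together with the fact that in a two-dimensional radial foliated space any two dicritical components are mutually disjoint, which is part of Corollary~\ref{cor:anhadircomponentes}.

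First I would record the dimension count. By Proposition~\ref{prop:essentialsets} we have $Z_j=\operatorname{Sing}(\mathcal F_j)$, and since the singular locus of a codimension one foliation has codimension at least two, $Z_j$ has dimension at most one. Working with representatives along the compact fiber over $\mathbf 0$, the intersection $E^j\cap Z_j$ is then an analytic set of dimension at most one with finitely many irreducible components, that is, a finite union of irreducible curves together with finitely many isolated points. It remains to prove that each such curve is non-singular and that the curves are pairwise disjoint.

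For non-singularity, let $\Gamma$ be an irreducible curve contained in $E^j\cap Z_j$. By irreducibility $\Gamma$ lies in a single component $D$ of $E^j$, so $\Gamma$ is one of the curves forming the set $H$ of irreducible curves in $Z_j\cap D$. Proposition~\ref{prop:restriccionextendidadacomponentes}(1) asserts that $\tilde H=E^j\vert_D\cup H$ is a normal crossings divisor of $D$ whose irreducible components are all dicritical for $\mathcal G=\mathcal F_j\vert_D$, and that $\mathcal N_{j,D}=(D,\tilde H,\mathcal G)$ is a two-dimensional radial foliated space. As $\Gamma$ is a component of the normal crossings divisor $\tilde H$, it is non-singular.

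The main point is disjointness. Let $\Gamma_1,\Gamma_2$ be two distinct irreducible curves in $E^j\cap Z_j$. If both lie in the same component $D$, then they are distinct dicritical components of $\tilde H$ in the radial foliated space $\mathcal N_{j,D}$, so Corollary~\ref{cor:anhadircomponentes} gives $\Gamma_1\cap\Gamma_2=\emptyset$. If instead $\Gamma_1\subset D_1$ and $\Gamma_2\subset D_2$ with $D_1\ne D_2$, I would argue inside $D_1$: any point of $\Gamma_1\cap D_2$ would lie on the curve $D_1\cap D_2$, each of whose irreducible components is a component of $E^j\vert_{D_1}$ and hence a dicritical component of the normal crossings divisor $\tilde H_1=E^j\vert_{D_1}\cup H_1$ associated with $D_1$ by Proposition~\ref{prop:restriccionextendidadacomponentes}, where $H_1$ denotes the union of the irreducible curves in $Z_j\cap D_1$. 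Since $\Gamma_1$ is also a dicritical component of $\tilde H_1$, Corollary~\ref{cor:anhadircomponentes} forces $\Gamma_1\cap(D_1\cap D_2)=\emptyset$, whence $\Gamma_1\cap D_2=\emptyset$ and a fortiori $\Gamma_1\cap\Gamma_2=\emptyset$. The obstacle in this last case is precisely to connect an intersection happening in ambient dimension three to the two-dimensional picture; the observation that resolves it is that the trace $D_1\cap D_2$ is itself a dicritical component of the restricted divisor $\tilde H_1$, so the two-dimensional disjointness of dicritical components applies verbatim.
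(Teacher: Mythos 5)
Your proof is correct. The preliminary dimension count, the non-singularity of the curves via Proposition~\ref{prop:restriccionextendidadacomponentes}, and the case of two curves inside a common component $D$ (where the paper speaks of an impossible dicritical corner, which is the same content as item (c) of Corollary~\ref{cor:anhadircomponentes}) all match the paper's argument. The genuinely different step is the cross-component case. The paper argues dynamically: a point $P\in\Gamma_1\cap\Gamma_2$ with $\Gamma_2\not\subset D_1$ is a regular point of the radial space $\mathcal N_{j,D_1}$ because it lies on the dicritical component $\Gamma_1$; since $\Gamma_2\subset\operatorname{Sing}(\mathcal F_j)=Z_j$ must be modified by the later blowing-ups of $\mathcal S$, these induce a quadratic blowing-up of $\mathcal N_{j,D_1}$ centered at the regular point $P$, which creates an indestructible singularity (Proposition~\ref{prop:indestructible}) and contradicts that the restricted sequence desingularizes $\mathcal M_j\vert_{D_1}$. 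You instead argue statically, entirely within the fixed step $j$: the intersection point must sit on $D_1\cap D_2$, whose components are dicritical components of $\tilde H_1$, and $\Gamma_1$ is another dicritical component of the same divisor, so item (c) of Corollary~\ref{cor:anhadircomponentes} applies verbatim. This is a clean observation that fully exploits Proposition~\ref{prop:restriccionextendidadacomponentes}, namely that the whole trace $E^j\vert_{D_1}$, and not only the singular curves, is dicritical in a two-dimensional radial space; it avoids any reference to the subsequent blowing-ups. Two small remarks: (i) you should state explicitly that $\Gamma_1$ is distinct from every component of $D_1\cap D_2$ --- this holds because otherwise $\Gamma_1\subset D_2$ and the two curves would share the component $D_2$, putting you back in your first case --- since the disjointness assertion of Corollary~\ref{cor:anhadircomponentes} only concerns distinct dicritical components; (ii) your argument uses that $\Gamma_2$ lies in a component of $E^j$, whereas the paper's dynamic argument only uses $\Gamma_2\subset\operatorname{Sing}(\mathcal F_j)$ and therefore actually excludes intersections of a singular curve of the divisor with \emph{any} curve of the singular locus; for the corollary as stated, both are sufficient.
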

\begin{proof}
Let $Y$ be a curve contained in $E^j\cap\operatorname{Sing}(\mathcal F_j)$. There is an irreducible component $D$ of $E^j$ such that $Y\subset D$.
 In view of Proposition \ref{prop:restriccionextendidadacomponentes}, we know that $Y$ is an irreducible component of the normal crossings divisor $E^{j}\vert_D\cup H$, where $H$ is the union of the curves in the singular locus contained in $E^j$. Hence, the curve $Y$ is non-singular.

Assume that there is another curve $\Gamma\ne Y$ in $E^j\cap\operatorname{Sing}(\mathcal F_j)$ such that $\Gamma\cap Y\ne \emptyset$ and let us find a contradiction.

If $\Gamma\subset D$, both $\Gamma$ and $Y$ are mutually intersecting dicritical components of the normal crossings divisor $E^{j}\vert_D\cup H$. Then we have a dicritical corner of in the radial foliated space $\mathcal N_{j,D}$ and this is not possible.

Assume that $\Gamma\not\subset D$. Given a point $P$ in $\Gamma \cap Y$, we know that it is a regular point for $\mathcal N_{j,D}$ since it belongs to the dicritical component $Y$ of the radial foliated space $\mathcal N_{j,D}$. 

The modification of $\Gamma$ by the subsequent blowing-ups induces a quadratic blowing-up for
$\mathcal N_{j,D}$ centered at the point $P$. This would generate an indestructible singularity.
\end{proof}
\subsection{Compact Curves of the Singular Locus}
 \label{Compact Curves of the Singular Locus}
 In this subsection we show that there are no compact curves in the singular locus.

\begin{lema}
\label{lema:nohirzebruchtubes}
Consider an index $0\leq j\leq N$. There are no Hirzebruch tubes $(M_j,E^j;Y)$ such that $Y$ is a curve in the singular locus of $\mathcal F_j$.
\end{lema}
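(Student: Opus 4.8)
The plan is to argue by contradiction: assuming that a Hirzebruch tube $(M_j,E^j;Y)$ with $Y\subset \operatorname{Sing}(\mathcal F_j)$ exists, I would show that blowing up $Y$ is forced to be a \emph{vertical} blowing-up, which by Proposition \ref{prop:verticalblowingups} makes $Y$ non-invariant, contradicting the fact that a curve contained in the singular locus is invariant (the generator $\omega$ vanishes identically along $Y$, so a fortiori $\omega|_{TY}=0$). First I would record that the order of the tube satisfies $\alpha\geq 0$ and $\beta\geq 1$, so $\alpha+\beta\geq 1$; this positivity is what will eventually be decisive. The case $j=N$ is vacuous since $\operatorname{Sing}(\mathcal F_N)=\emptyset$.

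The core of the argument takes place at the step of $\mathcal S$ where $Y$ is actually blown up. Since $Y\subset Z_j=\operatorname{Sing}(\mathcal F_j)$ and $Z_j$ is the union of the center-images $Y_j^{j+s}$, the irreducibility of $Y$ gives an index $s_0$ with $Y=Y_j^{j+s_0}$; at equireduction points of $Y$ the center $Y_{j+s_0}$ coincides with the strict transform $\widehat Y$ of $Y$, by Proposition \ref{prop:equireduction}. Tracking the tube through the intermediate blowing-ups by means of the reproduction property in Proposition \ref{prop:alphabetaHirzebruch1}(c), I would reduce to the situation $k=j+s_0$ in which $\widehat Y\cong\mathbb P^1_{\mathbb C}$ is the center $Y_k$ of $\pi_{k+1}$ and $(M_k,E^k;\widehat Y)$ is again a Hirzebruch tube, of some order $(\alpha',\beta')$ with $\alpha'+\beta'\geq 1$.

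Now I would run the following chain. The blowing-up $\pi_{k+1}$ is dicritical by Proposition \ref{prop:totaldicriticalness}, and by Proposition \ref{prop:alphabetaHirzebruch1}(a) its exceptional divisor $E^{k+1}_{k+1}$ is the Hirzebruch surface $S_{\alpha'+\beta'}$ of index $\alpha'+\beta'\geq 1$. By Lemma \ref{lema:restriccion} the restriction of $\mathcal F_{k+1}$ to this dicritical component is a two-dimensional radial foliated space, hence by Proposition \ref{prop: radialesHirzebruch}(1) it is exactly the fibration of $S_{\alpha'+\beta'}$, namely the fibers $\pi_{k+1}^{-1}(P)$ with $P\in \widehat Y$. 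In particular this restriction is non-singular, so $E^{k+1}_{k+1}\cap\operatorname{Sing}(\mathcal F_{k+1})=\emptyset$; consequently no later center of $\mathcal S$ meets $E^{k+1}_{k+1}$ and every one of its points is a simple regular point for $\mathcal M_{k+1}$. Moreover, the restricted foliation being the fiber foliation means precisely that $\pi_{k+1}$ is vertical in the sense of Definition \ref{def:vertical}. Then Proposition \ref{prop:verticalblowingups} yields that $\widehat Y=Y_k$ is not invariant for $\mathcal F_k$. This is the sought contradiction, since the center $Y_k$ of an adjusted resolution sequence is contained in $\operatorname{Sing}(\mathcal F_k)$ and is therefore invariant.

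The main obstacle is the reduction sketched in the second paragraph: passing from the given tube at level $j$ to the level $k$ at which the compact curve $\widehat Y$ is genuinely a center, while preserving both the global $\mathbb P^1_{\mathbb C}$ structure (needed so that the exceptional divisor is a \emph{compact} Hirzebruch surface and Proposition \ref{prop: radialesHirzebruch} applies) and the tube structure itself. This demands the equireduction bookkeeping along the whole compact curve $Y$ --- showing that the intermediate blowing-ups either miss $Y$ or reproduce its tube, so that neither a corner nor a failure of full normal crossings is created (here Proposition \ref{prop:nocorners} is the relevant safeguard) --- rather than merely localizing at a single equireduction point, which would destroy the compactness that the Hirzebruch classification requires.
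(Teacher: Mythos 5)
Your overall skeleton is the same as the paper's: reduce to the moment when the tube curve itself is the center of the next blowing-up, show that this blowing-up is vertical, and contradict Proposition \ref{prop:verticalblowingups}. But there is a genuine gap at the decisive step, namely the sentence ``In particular this restriction is non-singular, so $E^{k+1}_{k+1}\cap\operatorname{Sing}(\mathcal F_{k+1})=\emptyset$.'' This inference is false: for a \emph{dicritical} component $D$, the part of $\operatorname{Sing}(\mathcal F_{k+1})$ lying on $D$ is not controlled by the singular locus of the restriction $\mathcal F_{k+1}\vert_D$. Indeed, Proposition \ref{prop:restriccionextendidadacomponentes} says exactly that the curves of $\operatorname{Sing}(\mathcal F_{k+1})$ contained in $D$ are \emph{dicritical} (hence non-invariant, hence generically transverse and regular) components for the restricted foliated space; so $\mathcal F_{k+1}\vert_D$ can perfectly well be the non-singular fibration while $\operatorname{Sing}(\mathcal F_{k+1})\cap D$ still contains curves or isolated points. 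Since the hypothesis of Proposition \ref{prop:verticalblowingups} is precisely that \emph{all} points of $\pi_{k+1}^{-1}(Y)$ are simple regular for $\mathcal M_{k+1}$, your chain collapses exactly where the real work lies.

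The paper fills this hole with a descending induction on $N-j$ that your direct argument cannot replace. There, emptiness of $Z=\pi_{k+1}^{-1}(Y)\cap\operatorname{Sing}(\mathcal F_{k+1})$ is obtained in two steps: an isolated point of $Z$ would later be modified, inducing a quadratic blowing-up on the surface $\pi_{k+1}^{-1}(Y)$ at a regular point of the fibration and hence creating an indestructible singularity (Proposition \ref{prop:indestructible}); and an irreducible curve $Y'\subset Z$ is shown, via Proposition \ref{prop:restriccionextendidadacomponentes}, to be a $1$-infinitely near curve of $Y$, so that statement c) of Proposition \ref{prop:alphabetaHirzebruch1} produces a new Hirzebruch tube $(M_{k+1},E^{k+1};Y')$ with $Y'$ in the singular locus --- which is forbidden only by the induction hypothesis at the next level. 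Without an inductive set-up (equivalently, taking $j$ maximal among the levels carrying a tube), nothing in your argument excludes such curves $Y'$: verticality does not exclude them. A secondary issue is your reduction to level $k$, which you yourself flag as the main obstacle: the statement actually needed there is that any center meeting (the strict transform of) $Y$ must \emph{equal} it, i.e. one must rule out point centers on $Y$ and curve centers crossing $Y$; this is what Proposition \ref{prop:restriccionextendidadacomponentes}(2) together with Corollary \ref{cor:comonenteslugarsingulareneldivisor} provide, whereas Proposition \ref{prop:nocorners}, which you invoke as the safeguard, does not by itself do this job.
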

\begin{proof}
We do the proof by induction on $N-j$. If $j=N$, we are done, since the singular locus of $\mathcal F_N$ is empty. Assume that $j<N$, jointly with the corresponding induction hypothesis and let us find a contradiction with the existence of a Hirzebruch tube $(M_j,E^j;Y)$ such that $Y$ is a curve in the singular locus of $\mathcal F_j$.

If $Y_j\cap Y=\emptyset$, we are done, since the blowing-up $\pi_{j+1}$ induces the identity outside $Y$ and hence we find a Hirzebruch tube
$$
(M_{j+1},E^{j+1};Y)
$$
such that $Y$ is a curve in the singular locus of $\mathcal F_{j+1}$; contradicting the induction hypothesis.
	
Assume now that $Y_j\cap Y\ne\emptyset$. Let us note that $Y$ is contained in $E^j\cap \operatorname{Sing}(\mathcal F_j)$. Moreover, in view of Corollary
\ref{cor:comonenteslugarsingulareneldivisor}, the curve $Y$ does not intersect any other irreducible component of $E^j\cap\operatorname{Sing}(\mathcal F_j)$.

By application of Statement (2) in Proposition \ref{prop:restriccionextendidadacomponentes}, we necessarily have that $Y_j\subset E^j\cap \operatorname{Sing}(\mathcal F_j)$ and then $Y=Y_j$. Let us perform the blowing-up
$
\pi_{j+1}:\mathcal M_{j+1}\rightarrow \mathcal M_j
$
with center $Y_j=Y$. 

We will apply now Proposition \ref{prop:verticalblowingups}, to get a contradiction. In order to do it, we will show that $\pi_{j+1}$ is a vertical blowing-up and that $Z=\emptyset$, where
$$
Z=\pi_{j+1}^{-1}(Y)\cap \operatorname{Sing}(\mathcal F_{j+1}).
$$
By Proposition \ref{prop:verticalblowingups} we obtain the contradiction that $Y$ is not invariant.
	
Let us first show that $\pi_{j+1}$ is a vertical blowing-up. We know that the only radial foliation over the Hirzebruch surface $\pi_{j+1}^{-1}(Y)$ is given by the fibers of the points of $Y$ by $\pi_{j+1}$, see Subsection \ref{Radial Foliations over Hirzebruch Surfaces}. Then, this foliation must coincide with the restriction $\mathcal G$ of $\mathcal F_{j+1}$ to the exceptional divisor $\pi_{j+1}^{-1}(Y)$, in view of Lemma \ref{lema:restriccion}. Hence the blowing-up $\pi_{j+1}$ is a vertical blowing-up.
	
Let us show now that $Z=\emptyset$. Assume the contrary.  If $Z$ contains an isolated point, this point induces a quadratic blowing-up in the surface $\pi_{j+1}^{-1}(Y)$ and then, it should create an indestructible singularity in the foliation $\mathcal G$. Hence $Z$ is a finite union of curves. Take an irreducible component $Y'$ of $Z$. By applying Proposition \ref{prop:restriccionextendidadacomponentes}, we have the following properties concerning $Y'$:
\begin{enumerate}
	\item $Y'$ cannot coincide with a fiber of the blowing-up, since it must be dicritical.
	\item $Y'$ is not tangent to any fiber.
	\item $Y'$ has full normal crossing with $E^{j+1}$.
\end{enumerate}
We get that $Y'$ is a $1$-infinitely near curve of $Y$, then by Statement c) of Proposition \ref{prop:alphabetaHirzebruch1} we obtain a Hirzebruch tube
$$
(M_{j+1},E^{j+1};Y').
$$
Since $Y'$ is a curve of the singular locus of $\mathcal F_{j+1}$, we reach a contradiction with the induction hypothesis.
\end{proof}
\begin{proposition}
	\label{prop:nocompactcurves}
	The singular locus  of $\mathcal F_j$ does not contain compact curves, for any $j=0,1,\ldots,N$.
\end{proposition}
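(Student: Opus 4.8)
The plan is to reduce the statement to Lemma \ref{lema:nohirzebruchtubes} by showing that a compact curve inside the singular locus would necessarily be the core of a Hirzebruch tube. So I would argue by contradiction: suppose $C\subset\operatorname{Sing}(\mathcal F_j)=Z_j$ is a compact irreducible curve (using Proposition \ref{prop:essentialsets} to identify $Z_j$ with the singular locus). The first point is to locate $C$ inside the divisor. Since $\mathcal M_0$ is a germ of $(\mathbb C^3,\mathbf 0)$, a representative of $M_0$ is a Stein set whose only compact connected analytic subsets are points; as $\pi_1^j=\pi_1\circ\cdots\circ\pi_j$ is proper, the image $\pi_1^j(C)$ is such a subset, hence the single point $\mathbf 0$. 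Therefore $C$ lies in the exceptional locus $(\pi_1^j)^{-1}(\mathbf 0)\subset E^j$ (recall $E^0=\emptyset$), so $C$ is a compact curve of $E^j\cap\operatorname{Sing}(\mathcal F_j)$.

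Next I would extract the geometric properties of $C$. By Corollary \ref{cor:comonenteslugarsingulareneldivisor} the curve $C$ is nonsingular and disjoint from the other curves of $E^j\cap\operatorname{Sing}(\mathcal F_j)$. Fix a component $D$ of $E^j$ with $C\subset D$. By Lemma \ref{lema:restriccion} the restriction $\mathcal M_j|_D$ is a two-dimensional radial foliated space, and by Proposition \ref{prop:restriccionextendidadacomponentes} the curve $C$ is a dicritical component of the divisor $\tilde H$ of the radial space $\mathcal N_{j,D}$. If $C$ met another component $D'$ of $E^j$ transversally, then $D'\cap D$ would be a second dicritical component of $\mathcal N_{j,D}$ crossing $C$, which is excluded by Corollary \ref{cor:anhadircomponentes}; combined with the absence of corners (Proposition \ref{prop:nocorners}) this shows that $C$ is contained in every component of $E^j$ it meets, that is, $C$ has full normal crossings with $E^j$ and lies in one or two components.

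The decisive step, which I expect to be the main obstacle, is to upgrade this to the global assertion that $(M_j,E^j;C)$ is a Hirzebruch tube; the crux is proving $C\cong\mathbb P^1$ together with the two-chart coordinate changes $u=1/x$, $v=x^\beta y$, $w=z/x^\alpha$ required in the definition of a tube of some order $(\alpha,\beta)$. For rationality I would work inside $\mathcal N_{j,D}$, germified along the compact curve $C$: there $C$ is a dicritical component everywhere transverse to the cart-wheel foliation $\mathcal F_j|_D$, and the classification of dicritical components of two-dimensional radial foliated spaces underlying Corollary \ref{cor:anhadircomponentes} forces such a component to be a rational curve, whence $C\cong\mathbb P^1$. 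For the coordinate structure I would use $\mathcal S$-equireduction along $C$ (Proposition \ref{prop:equireduction}): away from finitely many points $C$ coincides with a center image $Y_j^{j+s}$, the transverse slices carry a cart-wheel singularity (Theorem \ref{teo:dimensiontworadial}) resolved uniformly in the family, and the resulting normal-bundle data glue exactly into the tube transition functions; this is the computation I would expect to require the most care, and where Proposition \ref{prop:alphabetaHirzebruch1} governs the admissible orders $(\alpha,\beta)$.

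Once $(M_j,E^j;C)$ is recognized as a Hirzebruch tube whose core $C$ lies in $\operatorname{Sing}(\mathcal F_j)$, the argument closes at once: this is precisely the configuration forbidden by Lemma \ref{lema:nohirzebruchtubes}, giving the desired contradiction. I would let the induction on $N-j$ remain implicit, since Lemma \ref{lema:nohirzebruchtubes} already packages it, and conclude that $\operatorname{Sing}(\mathcal F_j)$ contains no compact curves for any $j$.
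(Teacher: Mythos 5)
Your reduction to Lemma \ref{lema:nohirzebruchtubes} is the right target, and your opening steps match the paper's: locating $C$ in the exceptional locus over $\mathbf 0$ by properness, and getting nonsingularity and disjointness from Corollary \ref{cor:comonenteslugarsingulareneldivisor}. But the step you yourself flag as decisive --- showing that $(M_j,E^j;C)$ really is a Hirzebruch tube --- is exactly where the proposal stops being a proof: you defer the construction of the two charts with transition functions $u=1/x$, $v=x^\beta y$, $w=z/x^\alpha$ to an unspecified ``equireduction plus normal-bundle gluing'' computation, and that computation is neither routine nor carried out. A compact nonsingular curve with full normal crossings in $E^j$ does not automatically admit such a product-like structure on a neighborhood; one has to know how the curve was created. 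Your rationality argument has the same defect: Corollary \ref{cor:anhadircomponentes} says nothing about compact dicritical curves in an arbitrary component $D$ being rational; the paper only has such classification results for the projective plane (Proposition \ref{prop:radialinprojectiveplanes}) and for Hirzebruch surfaces (Proposition \ref{prop: radialesHirzebruch}), not for the iterated blow-ups that a general component $D$ of $E^j$ can be.

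The missing idea is a second induction, on $j$ itself, which you explicitly decline to use (``I would let the induction remain implicit, since Lemma \ref{lema:nohirzebruchtubes} already packages it'' --- but that lemma's induction is on $N-j$, a different thing). Take $j$ minimal such that $\operatorname{Sing}(\mathcal F_j)$ contains a compact curve $Y$. Then the center $Y_{j-1}$ of the last blowing-up, being contained in $\operatorname{Sing}(\mathcal F_{j-1})$, is \emph{not} compact by minimality, hence it is either a single point $Q$ or a germ of curve at a point $Q$; in both cases $Y\subset\pi_j^{-1}(Q)$. This pins the curve down completely. In the monoidal case, $Y=\pi_j^{-1}(Q)$ is the unique compact curve of the new exceptional component, and the tube charts are the standard charts of the blowing-up $\pi_j$ in local coordinates at $Q$ (after checking that $Y_{j-1}$ lies in no component of $E^{j-1}$, to avoid a corner, via Proposition \ref{prop:restriccionextendidadacomponentes}). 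In the quadratic case, $Y$ sits in $\pi_j^{-1}(Q)\cong\mathbb P^2_{\mathbb C}$, and Proposition \ref{prop:radialinprojectiveplanes}, applied through Proposition \ref{prop:restriccionextendidadacomponentes}, forces $Y$ to be a projective line with $E^j\vert_D$ empty or equal to $Y$; again the tube is read off in blow-up coordinates. In both cases rationality comes for free and no abstract gluing is needed. Your plan, which fixes $C$ abstractly and tries to build the tube around it by equireduction, skips this identification and therefore leaves the core of the argument unproven.
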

\begin{proof}
Assume by contradiction that there is an index $0\leq j\leq N-1$ such that there is a  compact curve $Y\subset M_{j}$ contained in the singular locus $\operatorname{Sing}(\mathcal F_{j})$ and that the singular locus $\operatorname{Sing}(\mathcal F_{k})$ does not contain compact curves for $0\leq k<j$.  If we show that $(M_{j},E^{j};Y)$ is a Hirzebruch tube, we obtain the desired  contradiction by application of Lemma \ref{lema:nohirzebruchtubes}.
	
Note that we necessarily have that $j\geq 1$ since $M_0=(\mathbb C^3,\mathbf 0)$ does not contain any compact curve. Moreover, the curve $Y$ is contained in
$$
E^{j}_{j}=D=\pi_{j}^{-1}(Y_{j-1}).
$$
Let us recall that $Y_{j-1}$ is contained in the singular locus  of $\mathcal F_{j-1}$, hence  $Y_{j-1}$ is not a compact curve. Then, there is a point $Q\in \operatorname{Sing}(\mathcal F_{j-1})$ such that one of the following situations occurs:
\begin{enumerate}
	\item The center $Y_{j-1}$ is the germ of a curve at the point $Q$.
	\item $Y_{j-1}=\{Q\}$.
\end{enumerate}
In both cases, we have that $Y\subset \pi_{j}^{-1}(Q)$.
	
Assume first that we are in situation (1), that is $\pi_{j}$ is a monoidal blowing-up with center at the germ of curve $Y_{j-1}$ at the point $Q$. In this case, we have that $\pi_{j}^{-1}(Q)$ is the only compact curve contained in $D$. Then, we have that
$$
Y=\pi_{j}^{-1}(Q).
$$
Note that the germ of curve $Y_{j-1}$ is not contained in any irreducible component of $E^{j-1}$. Otherwise, the divisor of $D$ given by
$$
E^j|_{D} \cup Y
$$
would have a corner, contradicting Proposition \ref{prop:restriccionextendidadacomponentes}. Then, by taking local coordinates $(x,y,z)$ at $Q$, we can assume that
$$
E^{j-1}\subset (x=0), \quad Y_{j-1}=(y=z=0)
$$
and considering the standard charts of the blowing-up, we obtain the desired Hirzebruch tube.
	
Assume now that we are in situation (2). We have that  $D=\pi_{j}^{-1}(Q)$ is isomorphic to the projective plane $\mathbb P_{\mathbb C}^2$. Doing a computation in coordinates of the blowing-up, we see that if $Y$ is a projective line, then
$$
(M_{j},H;Y)
$$
is a Hirzebruch tube, where $H$ is the union of the irreducible components of $E^{j}$ containing $Y$ or not intersecting $Y$. Thus,  we have to verify that $Y$ is a projective line of the projective plane $D$ and that $E^j=H$.

Again, in view of Proposition \ref{prop:restriccionextendidadacomponentes}, we know that
$$
(D, E^{j}|_{D}\cup Y,\mathcal F_{j}|_{D}), 
$$
is a plane radial foliated space and that all the irreducible components of $E^{j}\vert_{D}\cup Y$ are dicritical ones. Then by Proposition \ref{prop:radialinprojectiveplanes}, we know that $Y$ is a projective line and that $E^{j}\vert_{D}$ is either the empty set, or it is equal to $Y$. In this way, we obtain that $H=E^{j}$.
\end{proof}

By the previous Proposition \ref{prop:nocompactcurves}, we have that
the centers $Y_j$ in the sequence $\mathcal S$ are either points or germs of curves contained in the singular locus of $\mathcal F_j$, for $j=0,1,\ldots,N-1$.

\section{Radial Foliated Spaces in Dimension Three}
\label{Radial Foliations in dimension Three}

In this Section \ref{Radial Foliations in dimension Three} we give a proof of the main result Theorem \ref{main} in this paper. Below we recall its statement:
\begin{quote}
\em``
Consider a foliated space $\mathcal M_0=((\mathbb C^3,\mathbf 0),E^0,\mathcal F_0)$, with non-empty singular locus. Then $\mathcal M_0$ is a radial foliated space if and only if there are coordinates $x,y,z$ such that $\mathcal F_0$ is the ``open book'' foliation given by
$\omega=ydz-zdy$
and $E^0\subset (xyz=0)$.''
\end{quote}

In order to start the proof of Theorem \ref{main}, we take an $E^0$-controlled adjusted resolution sequence $\mathcal S$ for $\mathcal M_0$ as follows:
\begin{equation}
	\label{eq:desingularization sequence2}
	\mathcal S:\quad\quad
	((\mathbb C^3,\mathbf 0),E^0,\mathcal F_0)=\mathcal M_0\stackrel{\pi_1}{\leftarrow}
	\mathcal M_1
	\stackrel{\pi_2}{\leftarrow}
	\cdots
	\stackrel{\pi_N}{\leftarrow}
	\mathcal M_N.
\end{equation}
Note that $N\geq 1$. Recall that the fact that $\mathcal S$ is $E^0$-controlled means that if the first blowing-up $\pi_1$ is centered in a curve $Y_0$, then there is an irreducible component $D$ of $E^0$ transverse to $Y_0$.

We are going to deal with the following cases:
\begin{enumerate}[A)]
	\item The first blowing-up $\pi_1$ is monoidal, centered at a curve $Y_0$. The starting divisor $E^0$ has a single irreducible component, it is dicritical for $\mathcal M_0$ and transverse to $Y_0$.
	\item The first blowing-up $\pi_1$ is quadratic, centered at the origin  $\mathbf 0\in \mathbb C^3$ and  $E^0=\emptyset$.
	\item The general case.
\end{enumerate}

\subsection{Mattei-Moussu Sections}
\label{Mattei-Moussu Sections}
For more details on the results in this Subsection \ref{Mattei-Moussu Sections}, the reader can look at \cite{Mat-M} and \cite{Mat}.

 Let $\omega$ be an integrable $1$-form over $({\mathbb C}^3,\mathbf 0)$ that we write as
$$
\omega=a(x,y,z)dx+b(x,y,z)dy+c(x,y,z)dz.
$$
Let us assume that $\operatorname{Sing}(\omega)$ is a non-empty analytic subset of $({\mathbb C^3,\mathbf 0})$ of codimension at least two. That is, the $1$-form $\omega$ is a local generator for a foliation $\mathcal F$ such that $\operatorname{Sing}(\mathcal F)\ne\emptyset$. We say that the plane $x=0$ gives a {\em Mattei-Moussu section for $\omega$, (alternatively: for the foliation $\mathcal F$}) if and only if the two variables $1$-form
$$
\eta=\omega\vert_{x=0}=b(0,y,z)dy+c(0,y,z)dz
$$
has isolated singularity in $(\mathbb C^2,\mathbf 0)$. Hence $\eta$ is a local generator for the foliation $\mathcal F\vert_{x=0}$. We will also call $\mathcal F\vert_{x=0}$ the {\em Mattei-Moussu section} of $\mathcal F$ given by $x=0$.

We are interested in considering Mattei-Moussu sections that give cart-wheel foliations.

\begin{lema}
\label{lema:mattei-moussu section}
Take an integrable germ of $1$-form $\omega$ over $({\mathbb C^3},\mathbf 0)$ and let us assume that
$\omega\vert_{x=0}$ can be written as
$$
\omega\vert_{x=0}=ydz-zdy.
$$
Then, up to a coordinate change, we can express $\omega$ as
$$
\omega=u(x,y,z)(ydz-zdy),
$$
 where $u(x,y,z)$ is a unit. Hence, the foliation $\omega=0$ is an open book foliation in $({\mathbb C^3},\mathbf 0)$.
\end{lema}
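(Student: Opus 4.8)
The plan is to first pin down the singular locus, straighten it, and only then confront the rigidity of the transverse type. Write $\omega=a\,dx+b\,dy+c\,dz$, so that the hypothesis says $b(0,y,z)=-z$ and $c(0,y,z)=y$. First I would locate $\operatorname{Sing}(\mathcal F)$. Since the differential of $(b,c)$ with respect to $(y,z)$ at the origin is invertible, the implicit function theorem shows that $\{b=c=0\}$ is a smooth curve $\Gamma$ through $\mathbf 0$, transverse to $\{x=0\}$ and parametrized by $x$. Next I would read off the Frobenius integrability condition $\omega\wedge d\omega=0$, whose coefficient against $dx\wedge dy\wedge dz$ is
$$
a\left(\frac{\partial c}{\partial y}-\frac{\partial b}{\partial z}\right)+b\left(\frac{\partial a}{\partial z}-\frac{\partial c}{\partial x}\right)+c\left(\frac{\partial b}{\partial x}-\frac{\partial a}{\partial y}\right)=0.
$$
Restricting this identity to $\Gamma$, where $b=c=0$, leaves $a\,(\partial_y c-\partial_z b)\vert_\Gamma=0$; and since $\partial_y c-\partial_z b$ equals $2$ at the origin, hence is a unit, we conclude $a\vert_\Gamma=0$. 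Therefore $\operatorname{Sing}(\mathcal F)=\Gamma$ is exactly this smooth curve.

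Then I would straighten $\Gamma$ to the $x$-axis by a coordinate change of the form $(x,y,z)\mapsto(x,y-\eta(x),z-\zeta(x))$ with $\eta(0)=\zeta(0)=0$. Such a change fixes the plane $\{x=0\}$ and leaves the restriction $\omega\vert_{x=0}=y\,dz-z\,dy$ unchanged, so no information is lost. After this normalization $\{b=c=0\}=\{y=z=0\}$, and consequently for every small $t$ the plane $\{x=t\}$ meets the singular locus only at $(t,0,0)$. Hence $\omega\vert_{x=t}$ has an isolated singularity at that point and each $\{x=t\}$ is a Mattei--Moussu section in the sense of this subsection.

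The heart of the matter is to propagate the cart-wheel type along $\Gamma$. At the origin the section foliation is the radial (cart-wheel) one, which is dicritical, free of $1{:}1$ resonances (its linear part is a scalar), has trivial holonomy, and carries the meromorphic first integral $z/y$. The plan is to show that this transverse model is constant along $\Gamma$ and to assemble a meromorphic first integral $g=Z/Y$ of $\mathcal F$, where $(x,Y,Z)$ is a coordinate system with $\Gamma=\{Y=Z=0\}$. I would obtain this through the Mattei--Moussu theory of holonomy and first integrals \cite{Mat-M,Mat}, applied in the parametrized setting along $\Gamma$: the triviality of the holonomy of the radial model, together with the analytic linearizability granted by the absence of resonances, yields a fibered family of normalizations, and the integrability condition $\omega\wedge d\omega=0$ is precisely what forces these fiberwise data to glue into a genuine coordinate change of $(\mathbb C^3,\mathbf 0)$. \emph{This gluing and propagation step is the main obstacle.} Indeed, the linear part of $\omega\vert_{x=t}$ need not be scalar for $t\neq 0$ a priori---the integrability identity, expanded to first order in $(y,z)$, constrains it but does not make it radial fiber by fiber---so the constancy of the transverse type is not a formal consequence and cannot be extracted from naive continuity or a slice-by-slice normal form; it must come from the global first-integral (equivalently, holonomy) argument.

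Finally, once such coordinates $(x,Y,Z)$ are found, the leaves of $\mathcal F$ are the surfaces $\{Z=\lambda Y\}$, so $\mathcal F$ is generated by $Y\,dZ-Z\,dY$; that is, $\omega=u\,(Y\,dZ-Z\,dY)$ for a holomorphic unit $u$, with $u(0,\cdot)=1$ coming from the normalization on $\{x=0\}$. Renaming $(Y,Z)$ as $(y,z)$ gives the asserted expression $\omega=u(x,y,z)(y\,dz-z\,dy)$, exhibiting $\mathcal F$ as an open book foliation and completing the proof.
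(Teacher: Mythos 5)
Your preliminary steps are correct (the implicit function theorem argument showing $\operatorname{Sing}(\mathcal F)$ is a smooth curve $\Gamma$ transverse to $\{x=0\}$, the use of integrability to get $a\vert_\Gamma=0$, and the straightening of $\Gamma$), but they are not where the content of the lemma lies, and the step that \emph{is} the content --- propagating the cart-wheel transverse type along $\Gamma$, i.e.\ showing $\mathcal F$ is a cylinder over the radial foliation --- is left unproved. You yourself flag it as ``the main obstacle'' and resolve it only by a gesture toward ``Mattei--Moussu theory of holonomy and first integrals, applied in the parametrized setting,'' without stating which theorem is being invoked or why it applies. This is a genuine gap, not a citation of a standard result: the Mattei--Moussu theorems concern \emph{holomorphic} first integrals (finite holonomy, isolated separatrices), whereas the cart-wheel foliation has only the \emph{meromorphic} first integral $z/y$ and trivial one-leaf dynamics, so their main theorem does not apply as stated; a parametrized or meromorphic version would itself require proof or a precise reference.

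What you missed is that the hypothesis puts you in a situation where an elementary argument closes the gap, and this is exactly how the paper proceeds. Writing $\omega= ydz-zdy+\alpha\,dx+x(\beta\,dy+\gamma\,dz)$, the coefficient of $dy\wedge dz$ in $d\omega$ is $\phi_1=2+x\left(\partial\gamma/\partial y-\partial\beta/\partial z\right)$, a \emph{unit}; in other words $d\omega(\mathbf 0)\neq 0$, so the origin is a Kupka point. Integrability $\omega\wedge d\omega=0$ says precisely that the non-singular ``curl'' vector field $\xi=\phi_1\partial/\partial x+\phi_2\partial/\partial y+\phi_3\partial/\partial z$ satisfies $\omega(\xi)=0$, i.e.\ $\xi$ is tangent to $\mathcal F$. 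Rectifying $\xi$ to $\partial/\partial x$ (this can be done by the flow of $\xi$ issuing from $\{x=0\}$, hence preserving both the plane and the restriction $\omega\vert_{x=0}$) kills the $dx$-coefficient of $\omega$; a second application of integrability to the resulting form $b\,dy+c\,dz$ gives $cb_x-bc_x=0$, hence $b/c=-z/y$ and $\omega=(1+x\delta)(ydz-zdy)$. This replaces your entire holonomy scheme, and it also answers your (legitimate) concern about the linear part of $\omega\vert_{x=t}$ not being scalar fiber by fiber: the tangent non-singular field supplied by integrability is exactly the mechanism that forces the transverse type to be constant. As it stands, your proposal proves the easy normalizations and leaves the lemma itself unestablished.
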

\begin{proof}
The $1$-form $\omega$ is written as
$$
\omega= ydz-zdy+\alpha(x,y,z)dx+x(\beta(x,y,z)dy+\gamma(x,y,z)dz).
$$
The differential $d\omega$ of $\omega$ is given by
$$
d\omega=\phi_1(x,y,z)dy\wedge dz+\phi_2(x,y,z)dz\wedge dx+\phi_3(x,y,z)dx\wedge dy,
$$
where
$
\phi_1= 2+x\left({\partial \gamma}/{\partial y}-{\partial \beta}/{\partial z}\right)
$, hence $\phi_1$ is a unit. Consider the non-singular germ of vector field
$$
\xi=\phi_1\partial /\partial x+ \phi_2\partial /\partial y+\phi_3\partial /\partial z.
$$
By the integrability property of $\omega$, we have that $\omega(\xi)=0$. On the other hand, the classical rectification of $\xi$ allows us to assume that
$$
\xi=\partial/\partial x,
$$
without loosing the property that $\omega\vert_{x=0}=ydz-zdy$. In this new coordinates, we have that $\alpha=0$, that is, we have
$$
\omega= ydz-zdy+x(\beta(x,y,z)dy+\gamma(x,y,z)dz).
$$
By applying once more the integrability condition, we obtain that
$$
\omega=(1+x\delta(x,y,z))(ydz-zdy),
$$
as desired.
\end{proof}
\begin{corollary}
 \label{cor:libroabierto}
 Let $\mathcal F$ be  a germ of foliation on $({\mathbb C}^3,\mathbf 0)$. Then $\mathcal F$ is an open book foliation if and only if there is a Mattei-Moussu section $\mathcal G=\mathcal F\vert_\Delta$ such that $\mathcal G$ is a cart-wheel foliation. In this case, any plane section transverse to the singular locus is a Mattei-Moussu section.
\end{corollary}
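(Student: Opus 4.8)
The plan is to reduce both implications to the normal form $ydz-zdy$ and to invoke Lemma \ref{lema:mattei-moussu section}; the corollary is essentially an intrinsic, coordinate-free restatement of that lemma, so almost no work beyond coordinate bookkeeping should be needed.

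For the implication from the existence of a cart-wheel Mattei-Moussu section to $\mathcal F$ being an open book foliation, I would first pick coordinates $(x,y,z)$ on $(\mathbb C^3,\mathbf 0)$ so that the given section is $\Delta=(x=0)$. Since $\mathcal G=\mathcal F\vert_\Delta$ is by hypothesis a cart-wheel foliation, a biholomorphism of the plane $\Delta$ brings $\mathcal G$ into the shape $ydz-zdy$; extending this change to the ambient germ (for instance keeping the coordinate $x$ unchanged), a local generator $\omega$ of $\mathcal F$ then satisfies $\omega\vert_{x=0}=ydz-zdy$. Lemma \ref{lema:mattei-moussu section} now applies verbatim and yields $\omega=u(x,y,z)(ydz-zdy)$ with $u$ a unit, so $\mathcal F$ is an open book foliation.

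For the converse, together with the final assertion, I would put $\mathcal F$ in the open book form $\omega=ydz-zdy$, so that $\operatorname{Sing}(\mathcal F)$ is the $x$-axis $(y=z=0)$. A plane $\Delta$ transverse to this curve at the origin projects isomorphically onto the $(y,z)$-plane, hence by the implicit function theorem it is a graph $x=f(y,z)$; parametrizing $\Delta$ by $(y,z)$ and using that $\omega$ carries no $dx$ term, one reads off directly that $\omega\vert_\Delta=ydz-zdy$, which has an isolated singularity and is a cart-wheel foliation. This simultaneously exhibits a cart-wheel Mattei-Moussu section, as needed for the converse, and shows that \emph{every} plane section transverse to the singular locus is such a Mattei-Moussu section.

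The steps are conceptually immediate, so the only care required is bookkeeping rather than computation. I expect the two mild points to watch to be: checking that the planar change of coordinates normalizing $\mathcal G$ extends to the three-dimensional germ while preserving the hypothesis $\omega\vert_{x=0}=ydz-zdy$ of Lemma \ref{lema:mattei-moussu section}; and observing that transversality to the singular locus is precisely what furnishes the graph representation $x=f(y,z)$, which in turn guarantees that the restricted form retains an isolated singularity.
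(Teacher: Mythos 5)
Your proof is correct and takes exactly the route the paper intends: the paper states this corollary without a separate proof, as an immediate consequence of Lemma \ref{lema:mattei-moussu section}, and your two directions (normalize the section to cart-wheel form and invoke the lemma; restrict the normal form $ydz-zdy$ to a transverse graph $x=f(y,z)$, whose pullback is again $ydz-zdy$ since $\omega$ has no $dx$ term) are precisely that reduction. The one point to make explicit is that after normalizing $\mathcal G$, the restriction of an ambient generator equals $ydz-zdy$ only up to a unit $u(y,z)$, so one should divide the generator by the trivial extension of $u$ to $(\mathbb C^3,\mathbf 0)$ before applying the lemma verbatim --- a one-line adjustment entirely within the ``bookkeeping'' you flagged.
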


\subsection{First Monoidal Blowing-up}
\label{First Monoidal Blowing-up}  We consider here the case A) above. We have the following proposition:
 \begin{proposition}
 \label{prop:thecaseA}
   Assume that the first blowing-up $\pi_1$ in the resolution sequence $\mathcal S$ is centered at a germ of curve $Y_0$ and that $E^0$ has a single component, which is dicritical for $\mathcal M_0$ and transverse to $Y_0$. Then $E^0$ defines a Mattei-Moussu section $\mathcal F_0|_{E^0}$ of $\mathcal F_0$ that is a cart-wheel foliation.
 \end{proposition}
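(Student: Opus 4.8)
The plan is to show that the divisor $E^0=(x=0)$ is a Mattei-Moussu section on which $\mathcal F_0$ restricts to a cart-wheel foliation, by reading everything off the single blowing-up $\pi_1$ and its dicritical nature. First I would set up coordinates adapted to the hypothesis: since $Y_0$ is a germ of non-singular curve transverse to the single component $D=E^0$, choose $(x,y,z)$ so that $D=(x=0)$ and $Y_0=(y=z=0)$; transversality of $D$ and $Y_0$ is exactly the condition guaranteeing that $\pi_1$ is $E^0$-controlled, and it also means that $x$ restricts to a coordinate along $Y_0$. Because $Y_0\subset\operatorname{Sing}(\mathcal F_0)$ (the center of an admissible blowing-up of codimension two lies in the singular locus, by the remarks following Definition \ref{def:radialfoliatedspace}), the foliation $\mathcal F_0$ is generated by a $1$-form $\omega$ whose coefficients vanish on $Y_0$.

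Next I would exploit that $\pi_1$ is dicritical. By Proposition \ref{prop:totaldicriticalness} every blowing-up in $\mathcal S$ is dicritical, so in particular $\pi_1$ is, and the exceptional divisor $E^1_1=\pi_1^{-1}(Y_0)$ is not invariant for $\mathcal F_1$. Restricting $\mathcal F_0$ to the section $\Delta=(x=0)=D$ gives a two-dimensional foliation $\mathcal G=\mathcal F_0|_{\Delta}$, and the key observation is that the single blowing-up of the point $\Delta\cap Y_0=\mathbf 0$ inside $\Delta$ is induced by $\pi_1$. Since $D$ is dicritical for $\mathcal M_0$, the leaves of $\mathcal F_0$ are transverse to $D$ away from the singular locus, which forces $\mathbf 0$ to be a genuine (isolated) singularity of $\mathcal G$, so $\Delta$ is indeed a Mattei-Moussu section. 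The dicriticalness of $\pi_1$ then translates, via the analysis of vertical and dicritical blowing-ups in Subsection \ref{Vertical Blowing-ups} (Lemma \ref{lema:dicriticalnotdicritical}), into the statement that the exceptional divisor of the induced blowing-up of $\Delta$ is not invariant for $\mathcal G'=\mathcal F_1|_{\Delta'}$.

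I would then apply the two-dimensional theory already established. The induced foliated space $(\Delta,\emptyset,\mathcal G)$ is resolved by a single dicritical blowing-up of the origin into a foliated space with only simple regular points, because $\mathcal M_N$ has only simple regular points and restriction to $\Delta$ commutes with the sequence (as in Lemma \ref{lema:restriccion}). By Theorem \ref{teo:dimensiontworadial}, together with the uniqueness argument in Lemma \ref{lema:oneblowing-up1} that a single dicritical blowing-up of a singular origin can only resolve a cart-wheel foliation, I conclude that $\mathcal G$ is a cart-wheel foliation. Hence $\mathcal F_0|_{E^0}$ is a cart-wheel foliation, which is exactly the assertion.

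The main obstacle I anticipate is verifying cleanly that $\Delta=(x=0)$ really gives a Mattei-Moussu section, i.e.\ that $\omega|_{x=0}$ has an isolated singularity and is not identically degenerate along $Y_0$. One must rule out the possibility that the restriction $\mathcal G$ is regular or has a non-isolated singularity at $\mathbf 0$; this is where the dicritical transversality of $D$ does the essential work, by ensuring that the restricted foliation on the exceptional curve behaves like the transform of an honest two-dimensional singular germ rather than degenerating. Once this is secured, invoking Theorem \ref{teo:dimensiontworadial} to identify the cart-wheel type is routine.
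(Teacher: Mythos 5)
Your proposal follows the same skeleton as the paper's proof (restrict to $E^0$, show it is a Mattei--Moussu section, then use the two-dimensional theory to identify a cart-wheel foliation), but the decisive step is missing: you assert that ``since $D$ is dicritical for $\mathcal M_0$, the leaves of $\mathcal F_0$ are transverse to $D$ away from the singular locus''. This implication is false. Dicritical only means non-invariant, and a non-invariant surface can perfectly well carry a curve of tangencies with the foliation; along such a curve the restricted form $\omega\vert_{E^0}$ vanishes, its zero at the origin is not isolated, and $E^0$ is \emph{not} a Mattei--Moussu section. The paper's own example $\mathcal R_2$ in Section 6 (first integral $\phi_2=(xy+z^2)/y$, so $\omega=y^2dx-z^2dy+2yzdz$) exhibits exactly this: the plane $x=0$ is dicritical, yet $x=z=0$ is a tangency curve and the section fails to be Mattei--Moussu. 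Excluding tangency curves is therefore the real mathematical content of the proposition, and your closing paragraph concedes the difficulty without resolving it: ``the dicritical transversality of $D$ does the essential work'' is circular, since dicriticalness is precisely what does \emph{not} give transversality.

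The paper fills this gap in two steps absent from your proposal. First it proves $N=1$: since we work with the germ at the origin, any later center must meet the compact curve $E^1_0\cap E^1_1$, which is not contained in $\operatorname{Sing}(\mathcal F_1)$ by the results of Subsection \ref{Compact Curves of the Singular Locus}; blowing up such a center creates a corner of dicritical components, which is impossible. Second, once $N=1$, every point of $M_1$ is a simple regular point, and at a simple regular point a dicritical component is transverse to the foliation by the very definition of normal crossings; hence $\mathcal F_1$ and $E^1_0$ have no tangency curves, and since the strict transform of a tangency curve between $\mathcal F_0$ and $E^0$ would be such a curve (the blowing-up is an isomorphism off the origin, because $Y_0$ is transverse to $E^0$), none exist downstairs either. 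This also exposes a secondary problem in your argument: your claim that $(E^0,\emptyset,\mathcal G)$ is ``resolved by a single dicritical blowing-up'' implicitly assumes $N=1$, which you never establish; Lemma \ref{lema:restriccion} only gives you radiality of the restricted space (which, together with a singular origin, would indeed suffice for the cart-wheel conclusion via Theorem \ref{teo:dimensiontworadial}), but the resolution of the \emph{saturated} restriction says nothing about the isolatedness of the zeros of $\omega\vert_{E^0}$, which is what the Mattei--Moussu property, and hence Lemma \ref{lema:mattei-moussu section}, requires.
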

 \begin{proof}
 Let us first show that $N=1$. Denote $E^1=E^1_0\cup E^1_1$ the divisor in the step $1$, where $E^1_1=\pi_1^{-1}(Y_0)$ and $E^1_0$ is the strict transform of $E^0$. We know that both $E^1_0$ and $E^1_1$ are dicritical components. Moreover, the compact curve $E^1_0\cap E^1_1$ is not in the singular locus of $\mathcal F_1$, in view of the results in Subsection \ref{Compact Curves of the Singular Locus}. Thus, the next center $Y_1$ of $\pi_2$ is either a germ of curve over a point $Q$ or just a point $Q$, where $Q\in E^1_0\cap E^1_1$. In both cases we create a corner of dicritical components and this is not possible. This shows that $N=1$.

 Consider the plane $E^0\subset ({\mathbb C^3},\mathbf 0)$. We know that $E^0$ is dicritical for $\mathcal F_0$ and hence the restriction $\mathcal G=\mathcal F_0\vert_{E^0}$ exists. Moreover, there are no curves of tangencies between $E^0$ and $\mathcal F_0$; otherwise, these curves would be visible after the blowing-up $\pi_1$. Since all the points of $M_1$ are regular simple points for $\mathcal M_1$, we have that the tangency curves between $\mathcal F_1$ and $E^1_0$ do not exist. Then $\mathcal G$ is a Mattei-Moussu section of $\mathcal F_0$. It is desingularized without singularities after a single blowing-up and hence it is a cart-wheel foliation.
 \end{proof}
 The above Proposition \ref{prop:thecaseA} completes the proof of Theorem \ref{main} for case A), in view of Corollary \ref{cor:libroabierto}.

\subsection{First Quadratic Blowing-up} We consider in this subsection the case B) above. That is, we assume that the first center $Y_0$ in the resolution sequence $\mathcal S$ is the origin $Y_0=\{\mathbf 0\}$ and $E^0=\emptyset$. Thus, the sequence $\mathcal S$ fulfills the conditions of the adjusted resolution sequence considered in Section \ref{Dimension Three}.

Let us recall that the exceptional divisor $E^1=E^1_1=\pi_1^{-1}(\mathbf 0)
$
of $\pi_1$ is isomorphic to the projective plane $\mathbb P^2_{\mathbb C}$.
Note also that we already know that $E^1_1$ is a dicritical component for  $\mathcal M_1$.
\begin{proposition}
\label{prop:thecaseB}
Assume that the first blowing up $\pi_1$ in the resolution sequence $\mathcal S$ is a  quadratic blowing-up and that  $E^0=\emptyset$.  Then, there is a Mattei-Moussu section $\mathcal G=\mathcal F_0\vert_{\Delta}$ of $\mathcal F_0$ such that $\mathcal G$ is a cart-wheel foliation.
\end{proposition}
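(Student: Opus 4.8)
The plan is to produce the section $\Delta=(x=0)$ in suitable coordinates and to show that a single blowing-up of its origin already desingularizes the restricted foliation, so that the two-dimensional theory forces it to be a cart-wheel. The bridge between the three-dimensional sequence $\mathcal S$ and a resolution of the plane section will be the geometry of the exceptional $\mathbb P^2_{\mathbb C}$.

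First I would analyze the exceptional divisor $E^1_1=\pi_1^{-1}(\mathbf 0)\cong\mathbb P^2_{\mathbb C}$. Since $E^0=\emptyset$, the only component of $E^1$ is $E^1_1$, so by Lemma \ref{lema:restriccion} the restriction $(\mathbb P^2_{\mathbb C},\emptyset,\mathcal F_1\vert_{E^1_1})$ is a two-dimensional radial foliated space; by Proposition \ref{prop:radialinprojectiveplanes} it has degree $0$, that is, it is the pencil of all projective lines through a single point $P$, the unique singular point of the restriction. Because $\operatorname{Sing}(\mathcal F_1\vert_{E^1_1})=(\operatorname{Sing}(\mathcal F_1)\cap E^1_1)\cup\operatorname{Tang}(\mathcal F_1,E^1_1)$, both the intersection $\operatorname{Sing}(\mathcal F_1)\cap E^1_1$ and the tangency locus of $\mathcal F_1$ with $E^1_1$ are contained in $\{P\}$ (in accordance with Corollary \ref{cor:comonenteslugarsingulareneldivisor} and Proposition \ref{prop:nocompactcurves}). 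Now $P$ corresponds to a line $\ell\subset\mathbb C^3$ through the origin; choosing coordinates so that $\ell$ is the $x$-axis and $P=[1:0:0]$, I take $\Delta=(x=0)$. Its strict transform $\Delta'$ meets $E^1_1$ in the projective line $L'=(x=0)\subset\mathbb P^2_{\mathbb C}$, which avoids $P$.

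Next I would check that the single blowing-up $\Delta'\to\Delta$ of the origin of $\Delta$ already desingularizes $\mathcal G=\mathcal F_0\vert_\Delta$. Along $L'$ there are no singular points of $\mathcal F_1$ (since $\operatorname{Sing}(\mathcal F_1)\cap E^1_1\subseteq\{P\}$ and $L'$ avoids $P$) and $\mathcal F_1$ is transverse to $E^1_1$ (the tangency locus is contained in $\{P\}$). Finally $\mathcal F_1$ is transverse to $\Delta'$ along $L'$: at a point $Q\in L'$ the leaf $S$ of $\mathcal F_1$ cuts $E^1_1$ along the pencil line $m_Q$ through $P$, and since $m_Q\neq L'$ these two curves meet transversally inside $E^1_1$; were $S$ tangent to $\Delta'$ at $Q$, the common tangent plane would contain both $T_Qm_Q$ and $T_QL'$, hence all of $T_QE^1_1$, contradicting $S\pitchfork E^1_1$. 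Thus $L'$ is transverse to the restricted foliation, the blowing-up is dicritical, and the transform of $\mathcal G$ has only regular simple points along $L'$.

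To conclude, the origin is singular for $\mathcal G$ and $\mathcal G$ is desingularized by the single dicritical blowing-up above, so by Theorem \ref{teo:dimensiontworadial} (through Lemma \ref{lema:oneblowing-up1}) the foliation $\mathcal G$ is a cart-wheel foliation. It remains to see that $\Delta$ is genuinely a Mattei--Moussu section, i.e. that $\eta=\omega\vert_{x=0}$ has isolated singularity. If $\eta$ had a non-unit common factor $h$, then away from the origin $\omega$ would reduce along $\{h=0\}$ to a nonzero multiple of $dx$, so $\mathcal F_0$ would be tangent to $\Delta$ along this curve; its strict transform would meet $L'$ at a point where $\mathcal F_1$ is tangent to $\Delta'$, contradicting the transversality just established. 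Hence $\eta$ has isolated singularity and $\mathcal G=\mathcal F_0\vert_\Delta$ is the desired cart-wheel Mattei--Moussu section (so, by Corollary \ref{cor:libroabierto} and Lemma \ref{lema:mattei-moussu section}, $\mathcal F_0$ is an open book). I expect the main obstacle to be precisely the transversality of $\mathcal F_1$ with $\Delta'$ along $L'$ together with its use in excluding a common factor, since this is the step that upgrades ``the foliation $\mathcal G$ is resolved'' to ``the $1$-form $\omega\vert_\Delta$ has isolated singularity''.
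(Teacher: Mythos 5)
Your proposal follows the same route as the paper's proof: restrict $\mathcal F_1$ to the exceptional plane $E^1_1\cong\mathbb P^2_{\mathbb C}$, identify that restriction as the degree-zero pencil through a point $P_1$ (Lemma \ref{lema:restriccion} plus Proposition \ref{prop:radialinprojectiveplanes}), choose $\Delta$ so that $L'=\Delta'\cap E^1_1$ avoids $P_1$, and conclude that $\mathcal G=\mathcal F_0\vert_\Delta$ is a cart-wheel Mattei--Moussu section. The extra details you supply (the tangent-plane argument for transversality of $\mathcal F_1$ with $\Delta'$ along $L'$, and the common-factor argument showing that $\omega\vert_{x=0}$ has isolated singularity) make explicit two steps that the paper only asserts, and they are correct \emph{given} the inputs you feed them.

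There is, however, one genuine gap, precisely at the point where you establish those inputs. The identity
$$
\operatorname{Sing}(\mathcal F_1\vert_{E^1_1})=\bigl(\operatorname{Sing}(\mathcal F_1)\cap E^1_1\bigr)\cup\operatorname{Tang}(\mathcal F_1,E^1_1),
$$
from which you deduce $\operatorname{Tang}(\mathcal F_1,E^1_1)\subseteq\{P\}$, is not valid as stated. It holds for the restricted $1$-form $i^*\omega$, but the foliation $\mathcal F_1\vert_{E^1_1}$ is defined by $i^*\omega$ \emph{after dividing out common factors}, and a one-dimensional component of the tangency locus contributes exactly such a common factor; it would therefore be invisible in $\operatorname{Sing}(\mathcal F_1\vert_{E^1_1})$. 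So your equality presupposes that the tangency locus contains no curves, which is part of what you need to prove: the argument is circular at this step. (Proposition \ref{prop:nocompactcurves} and Corollary \ref{cor:comonenteslugarsingulareneldivisor} exclude curves inside $\operatorname{Sing}(\mathcal F_1)\cap E^1_1$, but they say nothing about tangency curves along which $\mathcal F_1$ is regular.) The missing ingredient is the persistence argument the paper uses in case A (Proposition \ref{prop:thecaseA}): a curve of tangency between $\mathcal F_1$ and the dicritical component $E^1_1$ is not contained in $\operatorname{Sing}(\mathcal F_1)$, so its generic points are never touched by the later centers of $\mathcal S$ (these sit inside the singular loci); its strict transform would then remain a tangency curve with a dicritical component of $E^N$, contradicting the fact that every point of $\mathcal M_N$ is a simple regular point. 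Once tangency curves are excluded this way, $i^*\omega$ has no common factor, your displayed equality becomes true, and the rest of your proof, which only uses the conclusion $\operatorname{Tang}(\mathcal F_1,E^1_1)\subseteq\{P\}$, goes through unchanged.
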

\begin{proof} We already know the following properties:
\begin{enumerate}
\item The restriction $\mathcal G_1=\mathcal F_1\vert_{E^1}$ gives a radial foliated space over $E^1$. Hence, it is the degree zero foliation on the projective plane $E^1$ given by the projective lines through a point $P_1$.
\item The intersection of the singular locus $\operatorname{Sing}(\mathcal F_1)$ with the exceptional divisor $E^1$ is the singleton $\{P_1\}$.  Indeed, it does not contain curves, since they will be necessarily compact curves. Hence, this intersection is a finite set of points. Moreover, modifying points that are not singular for the foliation $\mathcal G_1$ will produce indestructible singularities.
 \end{enumerate}
Let us select a plane $\Delta\subset (\mathbb C^3,\mathbf 0)$ such that the projective line $L=E^1\cap \Delta'$ does not contain $P_1$, where $\Delta'$ is the strict transform of $\Delta$ by $\pi_1$.

In this situation $\Delta'$ cuts transversely $\mathcal F_1$ and the restriction $\mathcal G'=\mathcal F_1\vert_{\Delta'}$ is a regular foliation transverse to $L$. This implies both that $\mathcal G$ is a Mattei-Moussu section of $\mathcal F_0$ and that $\mathcal G$ is a cart-wheel foliation.
\end{proof}
 The above Proposition \ref{prop:thecaseB} completes the proof of Theorem \ref{main} for case B), in view of Corollary \ref{cor:libroabierto}.
\subsection{Dicriticalness of a Transverse Component}
\label{Dicriticalness of a Transverse Component}
 Before going to the general case C), let us consider the case when $\pi_1$ is monoidal. Let us recall that the blowing-up $\pi_1$ is $E^0$-controlled, that is, there is a component $D$ of $E^0$ that is transverse to the center $Y_0$. Let us call $D$ the {\em control component}.

\begin{proposition}
\label{prop:controlcomponent} Assume that the first blowing-up in the resolution sequence $\mathcal S$ is monoidal. Then, the control component $D$ is dicritical.
\end{proposition}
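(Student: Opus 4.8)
The plan is to argue by contradiction, assuming that the control component $D$ is non-dicritical, i.e. invariant for $\mathcal F_0$. Since invariance of $D$ depends only on the foliation, I would work with the sequence $\mathcal S$ as in Section \ref{Dimension Three}, forgetting the divisor $E^0$; in particular $\pi_1$ is dicritical by Proposition \ref{prop:totaldicriticalness} and $E^1_1=\pi_1^{-1}(Y_0)$ is a dicritical component. As $Y_0$ is the center of a monoidal admissible blowing-up, it has codimension two, so $Y_0\subset\operatorname{Sing}(\mathcal F_0)$ and $Y_0$ is invariant for $\mathcal F_0$. The $E^0$-control condition provides a point $Q\in D\cap Y_0$ where $D$ is transverse to $Y_0$; I localize the whole discussion at $Q$. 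Writing $D'$ for the strict transform of $D$ (which is again invariant for $\mathcal F_1$), a computation in the charts of $\pi_1$ shows that $C:=D'\cap E^1_1=\pi_1^{-1}(Q)$ is exactly the fiber over $Q$ of the ruled surface $E^1_1$; hence $C\cong\mathbb P^1_{\mathbb C}$ is a compact curve with self-intersection $C\cdot C=0$ in $E^1_1$.

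Next I would show that $C$ is invariant for the restricted foliation $\mathcal G_1=\mathcal F_1|_{E^1_1}$. By Proposition \ref{prop:nocompactcurves} the compact curve $C$ is not contained in $\operatorname{Sing}(\mathcal F_1)$, so there is a point $p\in C$ at which $\mathcal F_1$ is regular and transverse to $E^1_1$; since $D'$ is invariant and $p\in D'$, the leaf of $\mathcal F_1$ through $p$ lies in $D'$, and its trace on $E^1_1$ is contained in $D'\cap E^1_1=C$. Thus $C$ is locally a leaf of $\mathcal G_1$, i.e. it is invariant. By Lemma \ref{lema:restriccion} the space $\mathcal M_1|_{E^1_1}$ is a two-dimensional radial foliated space, so $\operatorname{Sing}(\mathcal G_1)$ is finite and $\mathcal G_1$ is a cart-wheel foliation at each of its singular points. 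Since the Camacho-Sad index of a cart-wheel singularity with respect to any smooth invariant curve through it equals $+1$, the index theorem along $C$ reads
$$
\sum_{p\in\operatorname{Sing}(\mathcal G_1)\cap C}\operatorname{CS}(\mathcal G_1,C,p)=C\cdot C=0,
$$
which forces $\operatorname{Sing}(\mathcal G_1)\cap C=\emptyset$. By compactness of $C$, the foliation $\mathcal G_1$ is then non-singular in a neighbourhood of $C$.

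Finally, since $\pi_1$ is a dicritical admissible blowing-up, $\mathcal G_1$ is non-singular near $C$, and the fiber $C=\pi_1^{-1}(Q)$ is invariant for $\mathcal G_1$, Proposition \ref{prop:vertical2} shows that $\pi_1$ is a vertical blowing-up. Consequently $\mathcal G_1$ is the foliation by the fibers of $\pi_1$, so along $C$ the foliation $\mathcal F_1$ is regular and transverse to $E^1_1$; as $E^1_1$ is the only divisor component through the points of $C$, every point of $C=\pi_1^{-1}(Q)$ is a simple regular point for $\mathcal M_1$. Proposition \ref{prop:verticalblowingups} then yields that $Y_0$ is not invariant for $\mathcal F_0$, contradicting $Y_0\subset\operatorname{Sing}(\mathcal F_0)$. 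This contradiction shows that $D$ must be dicritical. I expect the main obstacle to be the middle step: deducing the invariance of the fiber $C$ from the invariance of $D$ and then extracting, via the Camacho-Sad index on a curve of self-intersection zero, the non-singularity of $\mathcal G_1$ near $C$ that feeds the verticality criterion of Proposition \ref{prop:vertical2}.
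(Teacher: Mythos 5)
Your strategy is the same as the paper's: assume $D$ invariant, show that the fiber $C=\pi_1^{-1}(Q)=D'\cap E^1_1$ is invariant for $\mathcal G_1=\mathcal F_1|_{E^1_1}$, use the Camacho--Sad index theorem on the zero self-intersection curve $C$ to kill the cart-wheel singularities of $\mathcal G_1$, deduce verticality of $\pi_1$ from Proposition \ref{prop:vertical2}, and then contradict Proposition \ref{prop:verticalblowingups} since $Y_0$ is invariant. Up to and including the application of Proposition \ref{prop:vertical2} the argument is essentially correct and matches the paper (one small point: at a regular point $p\in C$ the transversality of $\mathcal F_1$ to $E^1_1$ is not a consequence of regularity alone, but it does follow because the leaf through $p$ is an open piece of the invariant surface $D'$, which is transverse to $E^1_1$; this is recoverable).

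The genuine gap is the final step: \emph{``Consequently $\mathcal G_1$ is the foliation by the fibers of $\pi_1$, so along $C$ the foliation $\mathcal F_1$ is regular and transverse to $E^1_1$''}, from which you conclude that every point of $C$ is simple regular and that Proposition \ref{prop:verticalblowingups} applies. This inference is false. Verticality only concerns the restricted foliation $\mathcal G_1$, which is obtained after dividing the restricted $1$-form by its common factors, and those factors can hide singular points --- even whole singular curves --- and tangency points of $\mathcal F_1$ inside the exceptional divisor. The paper's own example $\mathcal N_2$, given by the first integral $\phi_2=(xy+z^2)/y$, is a counterexample to your implication: blowing up the invariant singular axis $(y=z=0)$, in the chart $y=y',\ z=y'z'$ one gets $\omega'=dx+z'^2dy'+2y'z'dz'$, so the blowing-up is vertical and $\mathcal G_1$ is the non-singular fibration $dx=0$; yet in the other chart $\omega''=y''^2dx-z''dy''+y''dz''$, and the curve $(y''=z''=0)$ --- a section \'etale over the center, meeting every fiber --- lies in $\operatorname{Sing}(\mathcal F_1)\cap\pi_1^{-1}(Y_0)$, invisible to $\mathcal G_1$ because it is absorbed in the common factor $y''^2$. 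Thus ``vertical $+$ $\mathcal G_1$ non-singular $+$ center invariant'' is perfectly consistent and does not yield simple regular points on the exceptional divisor; if your inference were valid, Proposition \ref{prop:verticalblowingups} would forbid any vertical admissible blowing-up with invariant center, contradicting this example (which is almost radial, resolved by one further blowing-up along that section). What is missing is precisely an argument exploiting the control component, i.e.\ the fact that the invariant surface $D'$ meets $E^1_1$ in exactly one fiber (for $\mathcal N_2$ every invariant surface contains the axis, so no such $D$ exists there), to exclude hidden singular curves and tangencies of $\mathcal F_1$ on $\pi_1^{-1}(Y_0)$. The paper is itself terse at this point (it asserts that ``no singularities appear''), but the explicit justification you substitute for that assertion is the step that fails.
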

\begin{proof} Up to skip the other components of $E^0$, we may assume that $E^0=D$. Let us find a contradiction with the fact that $D$ is invariant. We know that the first blowing-up $\pi_1$ is a dicritical blowing-up centered at a curve $Y_0$. Consider $\mathcal G_1=\mathcal F_1\vert_{E^1_1}$, where $E^1_1=\pi_1^{-1}(Y_0)$ and let $D'$ be the strict transform of $D$ by $\pi_1$. Since there are no compact curves in $\operatorname{Sing}(\mathcal F_1)$, we see that the generic points of the fiber $\pi_1^{-1}(\mathbf 0)= D'\cap E^1_1$ are regular for $\mathcal F_1$. Since $D'$ is invariant, we conclude that $\pi_1^{-1}(\mathbf 0)$ is invariant for $\mathcal G_1$.

In order to apply Proposition \ref{prop:vertical2}, let us show that $\mathcal G_1$ is non-singular. The only possible singularities should be a finite set of points in the fiber $\pi_1^{-1}(\mathbf 0)$, where $\mathcal G_1$ is a cart-wheel foliation. Since the fiber is invariant and it has zero self-intersection inside $E^1_1$, this set of points is empty. We conclude that $\pi_1$ is a vertical blowing-up and no singularities appear, this contradicts Proposition \ref{prop:verticalblowingups}, since the center $Y_0$ is invariant.
\end{proof}

\subsection{The General Case} Consider a radial foliated space
$$
\mathcal M_0=((\mathbb C^3,\mathbf 0),E^0,\mathcal F_0).
$$
Let us skip all the irreducible components in $E^0$, unless the control component of $\mathcal S$ is an adjusted resolution sequence starting at a monoidal blowing-up. We get a new radial foliated space:
$$
\widetilde{\mathcal M}_0=((\mathbb C^3,\mathbf 0),\tilde E^0,\mathcal F_0)
$$
such that $\mathcal S$ induces an adjusted resolution sequence $\tilde{\mathcal S}$ for $\widetilde{\mathcal M}_0$ and we have that $\tilde E^0=\emptyset$ except for the case when $Y_0$ is a germ of curve. In this case, the divisor $\tilde E^0$ has a single irreducible component, it is transverse to $Y_0$ and it is dicritical, in view of Subsection \ref{Dicriticalness of a Transverse Component}. 

\begin{lema} 
\label{lema:openbookgeneral}
In the above situation, we have that:
\begin{enumerate}
\item  There are local coordinates $x,y,z$ such that $\mathcal F_0$ is the open book foliation given by $ydz-zdy$.
\item The resolution sequence $\mathcal S$ satisfies that the blowing-ups 
$$
\pi_1,\pi_2,\ldots,\pi_{N-1}
$$ are quadratic blowing-ups centered at the infinitely near points $P_j\in M_j$  of the curve $y=z=0$.
\item  The last blowing-up $\pi_N$ is a monoidal blowing-up. The center $Y_{N-1}$ is the strict transform of the curve $y=z=0$. 
\item  The divisor $\tilde E^{N-1}$ has a single component trough $P_{N-1}$ and it is transverse to $Y_{N-1}$.
\end{enumerate}
\end{lema}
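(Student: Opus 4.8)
The plan is to obtain statement (1) from the two special cases already settled, and then to read off the shape of $\mathcal S$ from the single fact that $\operatorname{Sing}(\mathcal F_0)$ is one smooth curve, reproduced by point blow-ups and destroyed by the monoidal blow-up of itself. After the reduction, $\widetilde{\mathcal M}_0$ is of type A) or B). In case A), Proposition \ref{prop:thecaseA} shows that the transverse dicritical component $\tilde E^0$ is a Mattei--Moussu section of $\mathcal F_0$ that is a cart-wheel foliation; in case B), Proposition \ref{prop:thecaseB} produces a plane Mattei--Moussu section that is a cart-wheel foliation. In either case Corollary \ref{cor:libroabierto} yields coordinates $x,y,z$ with $\mathcal F_0=ydz-zdy$, which is statement (1); in particular $\operatorname{Sing}(\mathcal F_0)$ is the smooth curve $\Gamma=\{y=z=0\}$.

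The heart of the argument is a local dichotomy for the open book. On one hand, the quadratic blow-up of a point $P$ of $\Gamma$ carries $ydz-zdy$ to $x_1^{2}(y_1dz_1-z_1dy_1)$ in the relevant chart; dividing by the square of the exceptional equation we recover the open book form, so the transform is again an open book foliation whose singular locus is the strict transform $\Gamma'$. By Lemma \ref{lema:restriccion} and Proposition \ref{prop:radialinprojectiveplanes} the restriction to the exceptional $\mathbb P^2$ is radial of degree zero, the pencil of lines through one point; by Proposition \ref{prop:nocompactcurves} this point, equal to $\Gamma'\cap\pi^{-1}(P)$, is the only singularity on the exceptional divisor, and $\Gamma'$ is transverse to it there. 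On the other hand, the monoidal blow-up with center $\Gamma$ carries $ydz-zdy$, after division by the square of the exceptional equation, to a foliation of the form $dv=0$ in each chart, hence to a regular foliation transverse to the exceptional divisor. Thus a point center keeps us inside the open book world, while the curve center $\Gamma$ resolves the foliation in one stroke.

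With this dichotomy, statements (2) and (3) follow by induction along $\mathcal S$. By Proposition \ref{prop:totaldicriticalness} every $\pi_j$ is dicritical, and since a center of an adjusted sequence is contained in the singular locus, the process terminates exactly when that locus becomes empty; hence $\operatorname{Sing}(\mathcal F_j)$ is non-empty for $j<N$, and by Proposition \ref{prop:nocompactcurves} together with the first computation it equals the strict transform of $\Gamma$, a single smooth curve. The center $Y_j$ is therefore either a point of this curve or the whole curve. If it is the whole curve, the monoidal blow-up resolves $\mathcal F_j$, so $\operatorname{Sing}(\mathcal F_{j+1})=\emptyset$ and the sequence stops: this can only be the last blow-up $\pi_N$. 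Consequently each $\pi_{j+1}$, for $0\le j\le N-2$, is the quadratic blow-up of a point $P_j\in M_j$ on the strict transform of $\Gamma$, that is, at an infinitely near point of $\{y=z=0\}$, and $\pi_N$ is the monoidal blow-up of $Y_{N-1}$, the strict transform of $\Gamma$. In case A) this is the degenerate instance $N=1$, with no quadratic blow-ups and $\pi_1=\pi_N$ the monoidal blow-up of $\Gamma=Y_0$; in case B) the first blow-up must be quadratic, since $\tilde E^0=\emptyset$ makes a curve center non-controlled, whence $P_0=\mathbf 0$.

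Statement (4) is finally a corner count. In case A) it is immediate: $\tilde E^{N-1}=\tilde E^0$ is the single transverse dicritical component through $P_0=\mathbf 0=P_{N-1}$. In case B), the point $P_{N-1}=Y_{N-1}\cap \tilde E^{N-1}_{N-1}$ on the monoidal center lies on the exceptional divisor $\tilde E^{N-1}_{N-1}=\pi_{N-1}^{-1}(P_{N-2})$ of the last quadratic, which by the first computation is transverse to $Y_{N-1}$ there. Since the strict transform of $\Gamma$ is transverse to every exceptional divisor, its tangent direction at $P_{N-2}$ does not lie in the tangent plane of the previous component, so $P_{N-1}$ avoids the strict transforms of all earlier components; together with Proposition \ref{prop:nocorners}, which forbids corners, this shows that $\tilde E^{N-1}_{N-1}$ is the unique component of $\tilde E^{N-1}$ through $P_{N-1}$, transverse to $Y_{N-1}$. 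The one delicate point is the bookkeeping in (2)--(3): one must verify at every stage that the singular locus is still a single smooth curve and that a point blow-up never creates an extra curve of singularities, so that the adjusted sequence can terminate only through the final monoidal blow-up; this is exactly what the two preliminary computations guarantee.
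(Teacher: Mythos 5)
Your proof is correct and follows essentially the same route as the paper: statement (1) is obtained from the already-settled cases A) and B) together with Corollary \ref{cor:libroabierto}, and statements (2)--(4) follow by induction on $N$ from the dichotomy that a quadratic blow-up at a point of the axis reproduces the open book at the next infinitely near point, while the monoidal blow-up of the axis resolves the foliation in one stroke, forcing it to be the last center. The only difference is presentational: you write out explicitly the local pullback computations and the transversality bookkeeping for statement (4) that the paper's induction (``re-start the situation at $P_1$'') leaves implicit.
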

\begin{proof}
The resolution sequence $\tilde{\mathcal S}$ belongs to one of the cases A) or B). Then, we obtain that the foliation $\mathcal F_0$ is a open book foliation.

Now, we have two cases: $N=1$ or $N \geq 2$.

If we have that $N=1$, the blowing-up $\pi_1$ is necessarily a monoidal blowing-up centered at the singular locus $Y_0=(y=z=0)$ of $\mathcal F_0$; otherwise we do not destroy the whole singular locus. Note that in this case $\tilde E^0$ coincides with the control component, hence it is non-empty and transverse to $Y_0$.

Assume that $N \geq 2$. The first blowing-up $\pi_1$ is a quadratic blowing-up centered at the origin $P_0=\mathbf{0}$. The other possibility is to be equal to the singular locus $y=z=0$, but this would eliminate completely the singularities, contradicting that $N \geq 2$. Now, we can take local coordinates $x_1,y_1,z_1$ at $P_1$ such that the foliation $\mathcal F_1$ is given by 
$$
y_1dz_1-z_1dy_1
$$
and the exceptional divisor $\tilde E^1$ is locally given by $x_1=0$. We re-start the situation at $P_1$ and we end by induction on $N$.
\end{proof}

\begin{lema}
\label{lema: rectificaciondicritical}
Take local coordinates $x,y,z$ such that $\mathcal F_0$ is the open book foliation given by $ydz-zdy$. If $D$ is a dicritical component of $E^0$, then $D$ is transverse to the axis $y=z=0$.
\end{lema}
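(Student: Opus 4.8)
The plan is to restrict $\mathcal F_0$ to the dicritical surface $D$ and play this off against the two–dimensional theory of Section \ref{Radial Foliated Spaces in Dimension Two}. Write $L=(y=z=0)$, which is precisely $\operatorname{Sing}(\mathcal F_0)$. I argue by contradiction: suppose $D$ is \emph{not} transverse to $L$ at $\mathbf 0$. Since $D$ is smooth with $\mathbf 0\in D\cap L$, this says $T_{\mathbf 0}L\subset T_{\mathbf 0}D$; after a linear change in $(y,z)$ — which only rescales $ydz-zdy$ and hence fixes $\mathcal F_0$ — I may take $D=(y=\psi(x,z))$ with $\operatorname{ord}\psi\ge 2$.

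First I would set up the restricted foliated space. As $D$ is dicritical, $\mathcal G=\mathcal F_0\vert_D$ is a genuine foliation on $D$, and — exactly as in the proof of Lemma \ref{lema:restriccion} — the sequence $\mathcal S$ restricts to a resolution sequence $\mathcal S\vert_D$ of $\mathcal M_0\vert_D=(D,E^0\vert_D,\mathcal G)$, so that $\mathcal M_0\vert_D$ is a two–dimensional radial foliated space. In the chart $(x,z)$ of $D$ one gets $\mathcal G=\psi\,dz-z\,d\psi$, with meromorphic first integral $z/\psi$. Because $\operatorname{ord}\psi\ge 2$, every level curve $z=c\psi$ is tangent to the line $z=0$ at $\mathbf 0$, so the separatrices of $\mathcal G$ occupy only finitely many tangent directions; hence $\mathcal G$ is \emph{not} a cart–wheel foliation at $\mathbf 0$. (Had $D$ been transverse, one would have $D=(x=\phi(y,z))$ and $\mathcal G=ydz-zdy$, a cart–wheel, in agreement with the Mattei–Moussu Corollary \ref{cor:libroabierto}.)

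Next I would extract the contradiction. The first center $Y_0$ of $\mathcal S$ lies in $\operatorname{Sing}(\mathcal F_0)=L$, so $Y_0$ is either $\{\mathbf 0\}$ or the germ of $L$; in both cases $\mathbf 0\in Y_0$. When $Y_0=L$ the first blow–up is monoidal and $E^0$–controlled, and the no–corners property (Proposition \ref{prop:nocorners}) then prevents $L$ from lying inside $E^0$, whence $L\not\subset D$ and $Y_0\cap D=\{\mathbf 0\}$; together with the case $Y_0=\{\mathbf 0\}$ this shows that the induced first step of $\mathcal S\vert_D$ is the blow–up of the \emph{point} $\mathbf 0\in D$. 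If $\mathbf 0$ is a singular point of $\mathcal G$, then, $\mathcal M_0\vert_D$ being two–dimensional radial, Theorem \ref{teo:dimensiontworadial} would force $\mathcal G$ to be a cart–wheel there, contradicting the previous paragraph. If instead $\mathbf 0$ is a regular point of $\mathcal G$, then $\mathcal G$ has a holomorphic first integral at $\mathbf 0$, and blowing up the point $\mathbf 0$ produces, by Proposition \ref{prop:indestructible}, an indestructible singularity that survives the remaining blow–ups of $\mathcal S\vert_D$; this prevents $\mathcal S\vert_D$ from reaching simple regular points, again contradicting radiality. Either way the assumption fails, so $D$ is transverse to $y=z=0$.

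The hard part, which I would treat carefully, is twofold. First, justifying that the restriction of $\mathcal S$ to $D$ really is a two–dimensional resolution sequence ending at simple regular points: the final simple regular points of $\mathcal M_N$ restrict to $2$–dimensional simple regular points on the dicritical component $D_N$, while the centers restrict to admissible $2$–dimensional centers, so $\mathcal M_0\vert_D$ is radial. Second, disposing of the delicate sub–case in which $\mathcal G$ is merely \emph{regular} at $\mathbf 0$ — precisely where the naive ``the restriction is a non–cart–wheel singularity'' argument breaks down; this is exactly the place where the inputs $L\not\subset D$ (from the controlledness and the no–corners property) and the indestructibility of Proposition \ref{prop:indestructible} are indispensable.
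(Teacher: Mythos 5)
Your proposal takes a genuinely different route from the paper's, and its core branch is correct. The paper argues by splitting on whether $(y=z=0)\subset D$: the containment case is excluded by reducing, via Lemma \ref{lema:openbookgeneral}, to the final monoidal blow-up and producing a corner of dicritical components; the tangential case is excluded by the geometry of the first quadratic blow-up, where the strict transform of $D$ cuts the exceptional $\mathbb P^2_{\mathbb C}$ in a projective line through $P_1$, invariant by Proposition \ref{prop:radialinprojectiveplanes}, yet equal to the intersection of two dicritical components along simple regular points. You instead restrict to $D$: the restriction is a two-dimensional radial foliated space, tangency of $D$ with the axis confines the separatrices of $\mathcal G=\mathcal F_0\vert_D$ to finitely many directions, so $\mathcal G$ is not a cart-wheel; Theorem \ref{teo:dimensiontworadial} then forces $\mathbf 0$ to be a regular point of $\mathcal G$, and blowing up a regular point creates indestructible singularities (Proposition \ref{prop:indestructible}), incompatible with the restricted sequence ending at simple regular points. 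When the first center is $Y_0=\{\mathbf 0\}$ this argument is complete, and arguably cleaner than the paper's.

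The gap is in your treatment of the monoidal case $Y_0=(y=z=0)$, and it is twofold. First, the claim that the no-corners property ``prevents $L$ from lying inside $E^0$'' is false as stated: invariant components of $E^0$ may perfectly well contain the axis (the theorem itself allows $E^0\subset(xyz=0)$, and $y=0$, $z=0$ contain it), and Proposition \ref{prop:nocorners} is proved in the setting $E^0=\emptyset$, where every component is dicritical. What you need, and what is true, is only that the \emph{dicritical} component $D$ cannot contain $L$: if it did, the (then unique, by Lemma \ref{lema:openbookgeneral}) monoidal blow-up would create a corner of three dicritical components --- exceptional divisor, strict transform of $D$, strict transform of the control component, the last dicritical by Proposition \ref{prop:controlcomponent} --- contradicting Remark \ref{rk:nodicritical corners} at the final, simple regular stage. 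Second, and more seriously: given $L\not\subset D$ with $D$ tangent to $L$, your assertion that the induced first step of $\mathcal S\vert_D$ is the blow-up of the point $\mathbf 0$ is wrong. Blowing up along $L$ when $D$ meets $L$ tangentially at a single point does not induce the quadratic blow-up on the strict transform of $D$; for $D=(z=x^2)$ the strict transform has the singular equation $y_1z_1=x^2$, so there is no two-dimensional resolution sequence to speak of, and your restriction argument cannot be run in this sub-case. The fix is that this configuration cannot occur at all: the center of a blow-up of ambient spaces must have normal crossings with $E^0$, and a curve meeting the smooth surface $D\subset E^0$ at one point tangentially violates this. Once you insert that observation (which makes the sub-case vacuous) and replace the claim about $E^0$ by the claim about $D$ alone, your proof closes.
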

\begin{proof} Assume by contradiction that $D$ is not transverse to $y=z=0$. Then, one of the following two situations holds:
	\begin{enumerate}[i)]
		\item $(y=z=0)\subset D$.
		\item $(y=z=0)\not\subset D$. In this case, we have that  $N\geq 2$ and the first infinitely near point $P_1$ of $y=z=0$ belongs to the strict transform $D'$ of $D$ by $\pi_1$.
	\end{enumerate}
Assume that we are in case i). The property $(y=z=0)\subset D$ is stable under the quadratic blowing-ups $\pi_1,\pi_2,\ldots,\pi_{N-1}$. Thus, without loss of generality, we may assume that $N=1$ and $\pi_1$ is the monoidal blowing-up centered at $y=z=0$. When we perform the blowing-up we get a dicritical corner, this is not possible.
	
Assume now, that we are in case ii). In this case $D'\cap \pi_1^{-1}(\mathbf 0)$ defines a projective line $\Gamma$ passing through $P_1$. This curve $\Gamma$ is invariant in view of Proposition \ref{prop:radialinprojectiveplanes}. On the other hand, the curve $\Gamma$ is the intersection of two dicritical irreducible components of the divisor and the foliation is regular and simple at the points of $\Gamma$ different from $P_1$. This contradicts the fact that $\Gamma$ is invariant.
\end{proof}

Let us end the proof of Theorem \ref{main}. The question is if we can ``adapt'' the coordinates $x,y,z$ to the given divisor $E^0$ in such a way that $\mathcal F_0$ is given by $ydz-zdy=0$ and $E^0\subset (xyz=0)$.

In view of Lemma \ref{lema: rectificaciondicritical}, we can make a coordinate change $x\mapsto \phi(x,y,z)$ in such a way that the (only) possible dicritical component of $E^0$ is $x=0$. The invariant components (there are at most two of them) are necessarily of the type
$$
\lambda y+\mu z=0.
$$
Then, up to a linear change of coordinates in $y,z$, we are done.

\section{Almost Radial Foliated Spaces. Examples}
In this Section we give some examples of almost radial foliated spaces in dimension three that are not radial foliated spaces. We end the paper by showing that any almost radial foliated space $\mathcal M=(({\mathbb C^3,\mathbf 0}),E,\mathcal F)$ has at least one invariant germ of hypersurface.
\subsection{Non-Radial Open Books}
Take the foliation $\mathcal F$ on $(\mathbb C^3,\mathbf 0)$ given by $ydz-zdy=0$ and the divisor $E^0$ defined by 
$$
xy-z=0.
$$
The foliated space $(({\mathbb C^3,\mathbf 0}),E^0,\mathcal F)$ is almost radial but not radial. Indeed, the blowing-up centered at the axis $y=z=0$ gives an adjusted resolution sequence for it, hence it is an almost radial foliated space. It is not radial since $E^0$ is a dicritical component which is not transverse to the axis, contradicting Lemma \ref{lema: rectificaciondicritical}.
\subsection{Some Examples with Rational First Integral} Let us consider the foliations $\mathcal R_1$, $\mathcal R_2$ and $\mathcal R_3$ on $(\mathbb C^3,\mathbf 0)$ defined respectively by the rational differential $1$-forms
$$
\eta_i=d\phi_i,\quad i=1,2,3,
$$
where the rational functions $\phi_1$, $\phi_2$ and $\phi_3$ are given by
$$
\phi_1=\frac{xz^2+y^2}{yz},\quad  \phi_2=\frac{xy+z^2}{y},\quad \phi_3=\frac{xz^2+y^2}{z^2}.
$$
\begin{proposition}
  The foliated spaces $\mathcal N_i=(({\mathbb C^3,\mathbf 0}),\emptyset,\mathcal R_i)$ are almost radial, but not radial, for $i=1,2,3$.
\end{proposition}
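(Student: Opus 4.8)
The plan is to verify the two assertions separately for each $i$: that $\mathcal N_i$ is almost radial, by producing an explicit resolution sequence, and that it fails to be radial, by appealing to the already established Theorem \ref{main}.

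For the non-radial part I would argue as follows. Since here the divisor is $E=\emptyset$ and $\operatorname{Sing}(\mathcal R_i)\neq\emptyset$, Theorem \ref{main} says that $\mathcal N_i$ is radial if and only if $\mathcal R_i$ is an open book foliation, so it suffices to check that none of the $\mathcal R_i$ is an open book. For this I would use Corollary \ref{cor:libroabierto}: an open book has a smooth irreducible singular locus (a single line), and \emph{every} plane transverse to it restricts to a cart-wheel Mattei-Moussu section. Clearing denominators in $d\phi_i$ gives explicit generators $\omega_i$, and a direct computation of $\operatorname{Sing}(\omega_i)$ settles the matter. For $i=1$ one finds that $\operatorname{Sing}(\mathcal R_1)$ is the union of the $x$-axis and the $z$-axis, a reducible, hence non-smooth, curve, so $\mathcal R_1$ cannot be an open book. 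For $i=2,3$ the singular locus is the single smooth line $y=z=0$; here I would take the plane $x=0$, transverse to that line, and compute $\omega_i|_{x=0}$, which carries a common factor ($z$ when $i=2$, $y$ when $i=3$). Thus the restriction has a one-dimensional singular set and is not a Mattei-Moussu section, contradicting the conclusion of Corollary \ref{cor:libroabierto} for an open book; hence $\mathcal R_2$ and $\mathcal R_3$ are not open books either.

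For the almost radial part I would exhibit short sequences of (necessarily dicritical) admissible blowing-ups centered at the singular locus and its strict transforms—these centers are invariant because they lie in $\operatorname{Sing}(\mathcal R_i)$—checking in the standard charts that only simple regular points survive. For $\mathcal R_3$ a single blowing-up of the $x$-axis suffices: in the two charts the pulled-back form becomes, after dividing by the exceptional factor, $d(x+s^2)$ and $t^3\,dx-2\,dt$ (regular, since its $dt$-coefficient is a nonzero constant), so no singular point remains. For $\mathcal R_1$ and $\mathcal R_2$ I would first blow up the $x$-axis; the residual singular locus is then a smooth curve—for $\mathcal R_1$ the strict transform of the $z$-axis, for $\mathcal R_2$ a curve inside the new divisor—along which the transverse model is the cart-wheel $s\,dz-z\,ds$. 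Blowing this curve up produces, chart by chart, forms such as $d(x+\tau)$ and $d(\xi+s)$ with no singularities. In every chart one verifies that the exceptional components are dicritical and that each point is a simple regular point; the only multiple intersections are between \emph{two} dicritical components, which is permitted by Remark \ref{rk:nodicritical corners}, so no forbidden dicritical corner is created and the sequence is indeed a resolution sequence in the sense of Definition \ref{def:almostradialfoliatedspace}.

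The main obstacle is precisely the chart-by-chart verification that these explicit sequences terminate with only simple regular points: one must confirm that no singularity is left over in any chart and that no triple intersection of dicritical divisors appears, which is exactly the unexpected feature that makes these first-integral foliations desingularizable. A secondary subtlety, in the non-radial argument for $\mathcal R_2$ and $\mathcal R_3$, is that one must resist cancelling the common factor in $\omega_i|_{x=0}$: it is precisely the presence of that factor, i.e.\ the non-isolated singular locus of the honest restriction, that prevents $x=0$ from being a Mattei-Moussu section and thereby rules out the open book.
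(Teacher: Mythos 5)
Your proposal is correct and follows essentially the same route as the paper: non-radiality via the open-book characterization of Theorem \ref{main} (reducible singular locus for $\mathcal R_1$; failure of the plane $x=0$ to be a Mattei--Moussu section, against Corollary \ref{cor:libroabierto}, for $\mathcal R_2,\mathcal R_3$), and almost-radiality via the same explicit resolution sequences (one blowing-up at $y=z=0$ for $\mathcal R_3$; two blowing-ups, the second at the strict transform of the $z$-axis for $\mathcal R_1$ and at a curve inside the exceptional divisor, \'etale over $y=z=0$, for $\mathcal R_2$). Your chart-by-chart computations, which the paper leaves as ``evident'', check out and simply flesh out the published argument.
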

\begin{proof} 
Let us see first that they are not radial foliated space, since the corresponding foliations $\mathcal R_i$ are not open book foliations.

We have that $(x=y=0)\cup (y=z=0)\subset \operatorname{Sing}(\mathcal R_1)$, then the singular locus of $\mathcal R_1$ does not correspond to an open book foliation. Concerning $\mathcal R_2$, we have that
$$
(y=z=0)\subset \operatorname{Sing}(\mathcal R_2),
$$
but the plane $x=0$ does not determine a Mattei-Moussu section, as it should be in the open book case (see Corollary \ref{cor:libroabierto}). Indeed, the curve $x=z=0$ is a curve of tangencies for this section. Finally, in the case of $\mathcal R_3$, we have a similar situation to the one in the case of $\mathcal R_2$.

Now, the foliated space $\mathcal N_1$ has a resolution sequence of length two, with evident centers. The foliated space $\mathcal N_2$ has a resolution sequence of length two, the first blowing-up centered at $y=z=0$ and the second one centered in a new curve étale over $y=z=0$. Finally, the foliated space $\mathcal N_3$ has a resolution sequence of length one centered at $y=z=0$.
\end{proof}
\subsection{Existence of Invariant Hypersurface} The existence of invariant hypersurface is a general question in the case of dicritical germs of codimension one foliations. The reader can look at \cite{Jou}, \cite{Can-C}, \cite{Can-Mat}, \cite{Can-Mol}, \cite{Can-MS} and others. The answer is positive for almost radial foliated spaces:
\begin{theorem}
 \label{teo:existencia de hipersuperficie invariante}
 Let $\mathcal M=(({\mathbb C^3,\mathbf 0}),E,\mathcal F)$ be an almost radial foliated space. Then there is at least one germ of invariant analytic surface through the origin.
\end{theorem}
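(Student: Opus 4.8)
The plan is to produce the invariant surface by constructing a closed analytic invariant hypersurface upstairs, in the fully resolved model $\mathcal M_N$, and then pushing it down to $\mathbf 0$ along the proper morphism $\pi=\pi_1\circ\cdots\circ\pi_N$. First I would dispose of the trivial situations: if the resolution sequence is empty ($N=0$) the foliation is already regular at $\mathbf 0$ and its leaf through the origin is the desired germ; and if $E$ has a non-dicritical (invariant) component, that component is itself an invariant surface through $\mathbf 0$. So I may assume $N\ge 1$ and that all components of $E$ are dicritical. The descent step is justified by the proper mapping theorem of Remmert: the image of a closed analytic set under a proper holomorphic map is analytic, and since each $\pi_j$ contracts only the dicritical exceptional divisor $E^j_j$, an \emph{invariant} surface is never contracted (it cannot coincide with a dicritical component) and hence its successive images stay two-dimensional and analytic, finally meeting $\mathbf 0$.

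For the construction in $\mathcal M_N$ I would use the last blowing-up. By the arguments of Section \ref{Dimension Three}, the centers $Y_j$ are points or germs of curves inside the singular locus, and $\pi_N$ cannot be quadratic: were $E^N_N\cong\mathbb P^2_{\mathbb C}$, the restriction $\mathcal F_N\vert_{E^N_N}$ would be a degree-zero (cart-wheel) foliation with a singular point at which $\mathcal F_N$ is tangent to the dicritical divisor $E^N_N$, contradicting that every point of $\mathcal M_N$ is simple regular. Hence $\pi_N$ is monoidal with center a germ of non-singular curve $Y_{N-1}\subset\operatorname{Sing}(\mathcal F_{N-1})$ having full normal crossings with $E^{N-1}$, so that $D=E^N_N=\pi_N^{-1}(Y_{N-1})$ is a ruled (Hirzebruch tube) surface whose fibres $\pi_N^{-1}(P)$, $P\in Y_{N-1}$, are compact rational curves. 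By Lemma \ref{lema:restriccion} and Proposition \ref{prop: radialesHirzebruch} the restriction $\mathcal F_N\vert_D$ is non-singular and coincides with this ruling. Moreover $\mathcal F_N$ is transverse to $D$ at every point: at a simple regular point $\mathcal F_N$ is $dx_1=0$ with $E^N\subset(x_1\cdots x_n=0)$, and the dicritical component $D$ cannot be the invariant leaf $x_1=0$, so $D$ is one of the transverse hyperplanes.

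Fix a point $P\in Y_{N-1}$ and the compact fibre $\Phi=\pi_N^{-1}(P)\cong\mathbb P^1_{\mathbb C}$; since we work with germs along $\pi^{-1}(\mathbf 0)$, the whole curve $Y_{N-1}$ lies over $\mathbf 0$, so $P$ does too. I would take $L$ to be the leaf of $\mathcal F_N$ through $\Phi$. The heart of the proof is to check that $L$ is a \emph{closed analytic} surface in a neighbourhood of $\Phi$: because $\mathcal F_N$ is non-singular and transverse to $D$ along the compact curve $\Phi$, and $\Phi$ is a single leaf of $\mathcal F_N\vert_D$, one can cover $\Phi$ by charts carrying local first integrals $f_\alpha$ of $\mathcal F_N$ that vanish on $\Phi$; on overlaps one has $f_\beta=\varphi_{\alpha\beta}(f_\alpha)$ with $\varphi_{\alpha\beta}(0)=0$, so the zero-sets $\{f_\alpha=0\}$ agree and glue to the globally defined analytic hypersurface $L$ containing $\Phi$. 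Equivalently, the distinguished leaf through a compact invariant curve closes up even when the holonomy around $\Phi$ is non-trivial, since that holonomy fixes the leaf $\Phi$ itself. This compactness-plus-transversality gluing is the main obstacle and the crucial use of the geometry of the resolution. Once $L$ is available, $\pi(L)$ is, by the proper mapping theorem, a two-dimensional invariant analytic germ through $\mathbf 0$, which is the required invariant surface.
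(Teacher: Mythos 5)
Your construction breaks down at the step where you identify the induced foliation on the last exceptional divisor with the ruling. Proposition \ref{prop: radialesHirzebruch} applies to foliations on \emph{compact} Hirzebruch surfaces $S_\delta$, but $D=E^N_N=\pi_N^{-1}(Y_{N-1})$ is not one: by Proposition \ref{prop:nocompactcurves} the center $Y_{N-1}$ is a non-compact germ of curve, so $D$ is only a germ of ruled surface along the compact fibre, and Lemma \ref{lema:restriccion} tells you merely that $\mathcal F_N\vert_D$ is a two-dimensional radial foliated space, not that it is the ruling. In fact your conclusion is \emph{always} false in the situation at hand: if $\mathcal F_N\vert_D$ coincided with the ruling, then $\pi_N$ would be a vertical blowing-up in the sense of Definition \ref{def:vertical}, and since every point of $\mathcal M_N$ is simple regular, Proposition \ref{prop:verticalblowingups} would force the center $Y_{N-1}$ to be non-invariant --- contradicting that $Y_{N-1}\subset\operatorname{Sing}(\mathcal F_{N-1})$ is an admissible (invariant) center. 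The basic example already shows this: for the open book $ydz-zdy$ with $N=1$ and $Y_0=(y=z=0)$, in the chart $z=yt$ the transformed foliation is $dt=0$, so its restriction to the exceptional divisor $(y=0)$ has leaves $t=\mathrm{const}$, \emph{transverse} to the fibres $x=\mathrm{const}$. Consequently the compact fibre $\Phi$ is not contained in any leaf of $\mathcal F_N$, ``the leaf of $\mathcal F_N$ through $\Phi$'' does not exist, and the holonomy-gluing argument, which is the heart of your proof, has nothing to glue. The paper's proof goes exactly the other way: it works with the \emph{first} (monoidal) blowing-up, splits into the cases where the fibre over $\mathbf 0$ is invariant or not, produces the surface from a leaf germ at a regular point of a \emph{non-invariant} fibre, and uses Proposition \ref{prop:verticalblowingups} precisely to show that the ``all fibres invariant'' scenario (the one you are trying to build on) cannot persist.

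Two further points, less central but still gaps. First, your claim that ``the whole curve $Y_{N-1}$ lies over $\mathbf 0$'' is wrong: since $Y_{N-1}$ is not compact, it maps onto a germ of curve through $\mathbf 0$, not to the point $\mathbf 0$; one must deliberately choose $P$ in the fibre over $\mathbf 0$. Second, the descent ``justified by the proper mapping theorem'' works only for a surface that is closed in $\pi_j^{-1}(U)$ for a full neighbourhood $U$ downstairs; after the first blow-down your object is a germ of surface at a single point of a compact intermediate fibre, which is not closed in any such saturated neighbourhood, so Remmert's theorem does not apply directly to the remaining steps. The image of such a germ need not even be an analytic set (only its two-dimensional ``Zariski closure'' is, and one must then argue invariance from a single regular point, as the paper's notion of invariance allows). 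So even with a correct construction upstairs, the push-down needs more care than stated.
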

\begin{proof} If $\mathcal M$ is radial, we are done, since the foliation is an open book foliation. It remains to consider the case when $\mathcal M$ is almost radial, but not radial. Moreover, we can assume that $E=\emptyset$, without loss of generality. Let
$$
\pi_1:\mathcal M_1=(M_1,E^1,\mathcal F_1)\rightarrow (({\mathbb C^3,\mathbf 0}),\emptyset,\mathcal F)
$$
be a monoidal blowing-up centered at a germ of curve $(Y_0,\mathbf 0)\subset ({\mathbb C^3,\mathbf 0})$ that is the first blowing-up of a resolution sequence for the almost radial foliated space $(({\mathbb C^3,\mathbf 0}),\emptyset,\mathcal F)$.

We have that $L=\pi_1^{-1}(\mathbf 0)$ is either invariant or not invariant for $\mathcal F_1$.

If $L$ is not invariant for $\mathcal F_1$, we are done by considering a regular point for $\mathcal F_1$ in the fiber $L$  and a germ of invariant surface on it.

Assume that $L$ is invariant for $\mathcal F_1$. Let $\mathcal G_1$ be the restriction of $\mathcal F_1$ to the exceptional divisor $E^1=E^1_1=\pi_1^{-1}(Y_0)$. We know that $\mathcal G_1$ defines a radial foliated space over $E^1_1$. Noting that $L$ is not contained in the singular locus of $\mathcal F_1$, the fiber
$L$ is invariant for $\mathcal G_1$. Recalling that the self-intersection of $L$ in $E^1_1$ is zero and that the only singularities of $\mathcal G_1$ are of cart-wheel type, we conclude that $\mathcal G_1$ has no singular points. Hence, no point in $E^1_1$ can be modified, except if the next bowing-up is a germ of curve $Y_1$ contained in $E^1_1$ and transverse to the fiber.

We repeat our argument with the new center $Y_1$. If at one step of the procedure we create a non-invariant fiber, we are done. If the fibers are invariant at each step, in the last one we can apply Proposition \ref{prop:verticalblowingups} to obtain the contradiction that the center is not invariant.
\end{proof}

Let us note that the above proof shows that there are infinitely many invariant surfaces for an almost radial germ of foliated space in dimension three.

\end{document}